\def\shuffle{{\sqcup\mathchoice{\mkern-3mu}{\mkern-3mu}{\mkern-3.2mu}{\mkern-3.8mu}\sqcup}} 
\def\stuffle{{\sqcup\mathchoice{\mkern-12.5mu}{\mkern-12.5mu}{\mkern-8.7mu}{\mkern-8.5mu} - \mathchoice{\mkern-12.5mu}{\mkern-12.5mu}{\mkern-8.5mu}{\mkern-8.7mu}\sqcup}}
\tikzstyle{every picture}=[level distance = 8mm, baseline=-0.5ex]
\tikzstyle{prop}=[shape=circle,minimum size=6mm, draw=black!80, fill=green!30]
\renewcommand{\s}{\mathfrak{s}}
\newcommand{\C}{\mathbb{C}}
\renewcommand{\R}{\mathbb{R}}
\renewcommand{\Q}{\mathbb{Q}}
\newcommand{\Z}{\mathbb{Z}}
\renewcommand{\N}{\mathbb{N}}
\newcommand{\yew}{\textarc{I}}
\newtheorem{thm}{Theorem}[section]
\newtheorem*{thm*}{Theorem}
\newtheorem{lem}[thm]{Lemma}
\newtheorem{coro}[thm]{Corollary}
\newtheorem{prop}[thm]{Proposition}
\theoremstyle{definition}
\newtheorem{defn}[thm]{Definition}
\newtheorem{rk}[thm]{Remark}
\newtheorem{ex}[thm]{Example}
\providecommand{\customgenericname}{}
\newcommand{\newcustomtheorem}[2]{%
  \newenvironment{#1}[1]
  {%
   \renewcommand\customgenericname{#2}%
   \renewcommand\theinnercustomgeneric{##1}%
   \innercustomgeneric
  }
  {\endinnercustomgeneric}
}
\newcommand {\fraks}{{\mathfrak {s}}}
\newcommand {\calc}{{\mathcal {C}}}
\newcommand {\calct}{{\mathcal {CT}}}
\newcommand {\calf}{{\mathcal{F}}}
\newcommand {\calt}{{\mathcal {T}}}
\newcommand {\calw}{{\mathcal {W}}}
\newcommand{\tdun}[1]{\begin{picture}(10,5)(-2,-1)
\put(0,0){\circle*{2}}
\put(2,-2){\tiny #1}
\end{picture}}
\newcommand{\tddeux}[2]{\begin{picture}(12,5)(0,-1)
\put(3,0){\circle*{2}}
\put(3,0){\line(0,1){5}}
\put(3,5){\circle*{2}}
\put(4,-2){\tiny #1}
\put(4,4){\tiny #2}
\end{picture}}
\newcommand{\tdtrois}[3]{\begin{picture}(20,20)(0,-1)
\put(3,0){\circle*{2}}
\put(3,0){\line(0,1){5}}
\put(3,5){\circle*{2}}
\put(3,5){\line(0,1){5}}
\put(3,10){\circle*{2}}
\put(4,-2){\tiny #1}
\put(4,4){\tiny #2}
\put(4,9){\tiny #3}
\end{picture}}
\newcommand{\tdtroisun}[3]{\begin{picture}(20,12)(-5,-1)
\put(3,0){\circle*{2}}
\put(-0.65,0){$\vee$}
\put(6,7){\circle*{2}}
\put(0,7){\circle*{2}}
\put(5,-2){\tiny #1}
\put(6,5){\tiny #2}
\put(-5,8){\tiny #3}
\end{picture}}
\newcommand{\tdquatre}[4]{\begin{picture}(20,20)(0,-1)
\put(3,-5){\circle*{2}}
\put(3,-5){\line(0,1){5}}
\put(3,0){\circle*{2}}
\put(3,0){\line(0,1){5}}
\put(3,5){\circle*{2}}
\put(3,5){\line(0,1){5}}
\put(3,10){\circle*{2}}
\put(4,-7){\tiny #1}
\put(4,-1){\tiny #2}
\put(4,4){\tiny #3}
\put(4,9){\tiny #4}
\end{picture}}
\newcommand{\tdquatredeux}[4]{\begin{picture}(20,20)(-5,-1)
\put(3,0){\circle*{2}}
\put(-.65,0){$\vee$}
\put(6,7){\circle*{2}}
\put(0,7){\circle*{2}}
\put(0,14){\circle*{2}}
\put(0,7){\line(0,1){7}}
\put(5,-2){\tiny #1}
\put(9,5){\tiny #2}
\put(-5,5){\tiny #3}
\put(-5,12){\tiny #4}
\end{picture}}
\newcommand{\tcinq}[5]{\begin{picture}(15,26)(-5,-1)
\put(3,0){\circle*{2}} 
\put(-0.65,0){$\vee$}
\put(0,5){\circle*{2}} 
\put(0,5){\line(0,1){7}}
\put(0,12){\circle*{2}} 
\put(6,5){\circle*{2}} 
\put(6,5){\line(0,1){7}}
\put(6,12){\circle*{2}} 

\put(5,0){\tiny #1}
\put(-5,5){\tiny #2}
\put(-5,10){\tiny #3}

\put(7,5){\tiny #4}
\put(7,12){\tiny #5}
\end{picture}}
\newcommand{\tcinqonze}[5]{\begin{picture}(15,26)(-5,-1)
\put(3,5){\circle*{2}} 
\put(-0.65,5){$\vee$}
\put(6,12){\circle*{2}} 
\put(0,12){\circle*{2}}
\put(3,0){\circle*{2}} 
\put(3,0){\line(0,1){5}}
\put(0,12){\line(0,1){7}} 
\put(0,19){\circle*{2}} 

\put(5,0){\tiny #1}
\put(5,5){\tiny #2}
\put(-5,12){\tiny #3}
\put(-2,21){\tiny #4}
\put(7,11){\tiny #5}
\end{picture}}
\begin{document}

\title{Generalisations of multiple zeta values to rooted forests}
\author{Pierre~J.~Clavier${}^{1}$, Dorian Perrot${}^{2,3}$\\
~\\
\normalsize \it $^1$  Department of Mathematics, IRIMAS, \\
\normalsize \it Université de Haute Alsace.\\
\normalsize \it $^2$ ENS Rennes.\\
\normalsize \it $^3$ Université de Rennes.\\
~\\
\normalsize email: pierre.clavier@uha.fr}

\date{}

\maketitle

\begin{abstract} 
 We show that any convergent (shuffle) arborified zeta value admits a series representation. This justifies the introduction of a new generalisation to rooted forests of multiple zeta values, and we study  its algebraic properties. As a consequence of the series representation, we derive elementary proofs of some results of Bradley and Zhou for Mordell-Tornheim zeta values and give explicit formulas. The series representation for shuffle arborified zeta values also implies that they are conical zeta values. We characterise which conical zeta values are arborified zeta values and evaluate them as sums of multiple zeta values with rational coefficients.
\end{abstract}
{\bf Math. subject classification:} 11M32; 05C05.\\
{\bf Keywords: } Multiple zeta values and generalisations, rooted forests, cones.

\tableofcontents

\section*{Introduction}

\addcontentsline{toc}{section}{Introduction}

\subsection*{Multiple zeta values}

\addcontentsline{toc}{subsection}{Multiple zeta values and objectives}

Multiple zeta values (MZVs) appeared for the first time in the work of Euler \cite{Eu1796}. After having appeared in various disguise in the work of many authors for two centuries, they eventually appeared in full generality as an application to Ecalle's work on mould calculus \cite{Ecalle}. A systematic studies was then undertaken by Hoffman \cite{Ho92} and Zagier \cite{Za91}. Many important conjectures about the relations obeyed by MZVs and their transcendentality are still an important field of research. MZVs and their generalisations have also appeared in multiple domains of Physics, for example when computing amplitudes in some quantum field or string theories, see for example \cite{To17}. 

For the interested reader, let us point out that there exists many classical introductions to MZVs, see for example \cite{Wa11}. We do not aim at writing another introduction to MZVs here, but simply to present some properties of MZVs in order for us to latter state the objectives of this paper.

A word written in the alphabet $\{x,y\}$ is called {\bf convergent} if it starts with $x$ and ends with $y$. For every convergent word $w\in\calw_{\{x,y\}}^{\rm conv}$ the 
{\bf shuffle multiple zeta value (MZV)} associated to $w$ is given by the image of the map
\begin{align} 
 \zeta_\shuffle :~ & \calw_{\{x,y\}}^{\rm conv}\subseteq\calw_{\{x,y\}} \longrightarrow \R \nonumber \\
		  & (\epsilon_1\cdots \epsilon_k) \mapsto \int_{1\geq t_1\geq \cdots \geq t_k\geq0}\prod_{i=1}^k \omega_{\epsilon_i}(t_i) \label{eq:integral_shuffle_MZV}
\end{align}
(with $\omega_x(t)=dt/t$, $\omega_y(t)=dt/(1-t)$) evaluated at $w$. On the other hand, a word written in the alphabet $\N^*$ is also called {\bf convergent} if it does not start with $1$. For every such convergent word $w\in\calw_{\N^*}^{\rm conv}$ the 
{\bf stuffle multiple zeta value} associated to $w$ is given by the image of the map
\begin{align} 
 \zeta_\stuffle :~ & \calw_{\N^*}^{\rm conv}\subseteq\calw_{\N^*} \longrightarrow \R \nonumber\\
		  & (p_1\cdots p_k) \mapsto \sum_{n_1>\cdots>n_k>0}\frac{1}{n_1^{p_1}\cdots n_k^{p_k}} \label{eq:sum_stuffle_MZV}
\end{align}
evaluated at $w$.

The maps $\zeta_\shuffle$ and $\zeta_\stuffle$ are algebra morphisms for the {\bf shuffle} of words, written $\shuffle$, and the {\bf stuffle} of words (also called quasi-shuffle and sticky shuffle in the literature), written $\stuffle$ respectively. They are also related by {\bf Kontsevitch's relation} via a binarisation map $\s:\calw_{\N^*}\longrightarrow\calw_{\{x,y\}}$. These various combinatorial objects will be rigorously introduced in \S \ref{ref:subsec:MZVs}.


The main objective of these paper is to extend these three properties of MZVs (namely shuffle algebra morphism, stuffle algebra morphism and Kontsevitch's relation) from words to rooted forests for a generalistion of MZVs.

\subsection*{Generalisations of multiple zeta values}

\addcontentsline{toc}{subsection}{Generalisations of multiple zeta values}

There exists multiple generalisations of MZVs, for example Euler sums, Hurwitz multiple zetas, (multiple) polylogarithms, Shintani zetas, Witten zetas... In this paper, we start by studying one of these generalisations, namely {\bf arborified zeta values} (AZVs) and apply our result to other generalisations, in particular {\bf conical zeta values} (CZVs).

AZVs appeared in the work of Ecalle \cite{Ecalle} and much later in the work of Yamamoto \cite{Ya14}. Their extensive study started in \cite{Ma13} and was completed in \cite{Cl20}. The renormalisation of their divergent counterparts was performed in \cite{CGPZ18}. In particular, it was shown in \cite{Cl20} that AZVs are linear combinations of MZVs with rational coefficients.
In the same paper, two generalisations to rooted forests of the shuffle and stuffle products were built for which AZVs are algebra morphisms. However, it was also shown in the same paper that, while AZVs exists as iterated sums and iterated integrals like MZVs, these two objects are not related by the most natural generalisation of the binarisation map $\fraks$. In other words: the natural generalisation of Kontsevitch's relation does not hold for AZVs.


In this paper, we also study Mordell-Tornheim zetas \cite{To50,Mo58,Ho92} and {\bf conical zeta values} (CZVs). The latter have been defined in \cite{GPZ13} and their divergent counterparts have been renormalised in \cite{GPZ17}. An important open question which is given a partial solution in this paper is to characterise when CZVs are linear combinations of MZVs with rational coefficients. Notice that this question may be of interest in physics since some CZVs have been shown to appear in the perturbative expansion of amplitudes of some string theories, see for example \cite{Ze16,Ze17}.


\subsection*{Main results and plan of the paper}

\addcontentsline{toc}{subsection}{Main results and plan of the paper}

Section \ref{section:two} starts with a presentation of some notions of combinatorics of words that are useful for MZVs, and state the properties of MZVs that we will generalise to rooted forests. Then we recall some definitions from graphs theory and combinatorics of rooted forests and state results concerning (shuffle) arborified zeta values we build upon. The section ends with a statement and a proof of Theorem \ref{thm:integral_sum}, which gives a new series representation for arborified zeta values. This result is one of the most important of this paper.

This main theorem justifies the definition of {\bf tree zeta values} (Definition \ref{defn:tree_zeta_values}). The study of tree zeta values is the main topic of Section \ref{section:two_bis}. Theorem \ref{thm:tree_zeta_MZVs} is then a direct consequence of Theorem \ref{thm:integral_sum} together with the aforementioned results of \cite{Cl20} on arborified zeta values. We then introduce the yew product (Definition \ref{defn:yew}) and give an explicit description of this product (Theorem \ref{thm:yew_formula}). We then show that TZVs form an algebra morphism for the yew product (Theorem \ref{thm:yew_TZVs}) and relate this product with various combinatorial objects (Theorem \ref{thm:comm_fl_fl_yew}); which allows us to directly relate TZVs and stuffle MZVs (Corollary \ref{coro:TZVs_stuffle_MZVs}). We end Section \ref{section:two_bis} with a new generalisation to rooted forests of the stuffle product (Definition \ref{defn:new_stuffle}) for which TZVs form an algebra morphism (Theorem \ref{thm:TZVs_alg_morph}).

In Section \ref{section:three} we present a second application of Theorem \ref{thm:integral_sum} which concerns Mordell-Tornheim zeta values. We give in Proposition \ref{prop:MT_s1_0} a formula for a special family of Mordell-Tornheim zeta values and give a new proof of a classical Theorem of Bradley and Zhou (Theorem \ref{thm:MT_tree}). Along the way we prove a decomposition formula for Mordell-Tornheim zeta values (Equation \eqref{eq:MT_decom}) which implies a formula for generic Mordell-Tornheim zeta values (Equation \eqref{eq:expression_MT}).

Our main application of Theorem \ref{thm:integral_sum} concerns conical zeta values and can be found in Section \ref{section:four}. After recalling some classical definitions of the theory of cones and CZVs, we show (Proposition \ref{prop:forest_cones}) that shuffle arborified zeta values are CZVs. This gives a formula for CZVs that can be obtained from rooted forests (Theorem \ref{thm:tree_CZVs_MZVs}). Such cones are characterised in Proposition \ref{prop:cone_to_tree}. Putting together the results of Sections \ref{section:two} and \ref{section:four} we obtain the second main result of the paper, Theorem \ref{thm:main_result}, which gives a sufficient condition on cones for the associated CZVs to a linear combination of MZVs with rational coefficients.

The paper ends with Section \ref{section:five} where evaluations of CZVs are performed in details using the methods developed in Sections \ref{section:two_bis} and \ref{section:four}.

\subsection*{Notations}

Through the paper, we use $[n]:=\{0,1,\cdots,n\}$. We also use $\N:=\Z_{\geq0}$, $\N^*:=\Z_{\geq1}$, $\R_+:=\R_{\geq0}$ and $\R^*_+:=\R_{>0}$.

\section{Series representation of arborified zetas} \label{section:two}

\subsection{Multiple zeta values} \label{ref:subsec:MZVs}

We start by introducing the notions of combinatorics of words relevant to our study of MZVs and their generalisations.
\begin{defn}
 \begin{itemize}
  \item For a set $\Omega$, we write $\calw_\Omega$ the linear span (over $\R$) of {\bf words} written in the alphabet $\Omega$, that is to say the algebra over $\R$ of non-commutative polynomials with variables in $\Omega$. We also write $\emptyset$ for the empty word.
  \item The {\bf concatenation product} $\sqcup:\calw_\Omega\times\calw_\Omega\mapsto\calw_\Omega$ is defined by
  \begin{align*}
   \emptyset\sqcup w = w\sqcup\emptyset & = w, \\
   (\omega_1\cdots\omega_k)\sqcup(\omega_1'\cdots\omega_n') &  = (\omega_1\cdots\omega_k\omega_1'\cdots\omega_n')
  \end{align*}
  for any word $w$ in $\calw_\Omega$ and letters $\omega_1,\cdots,\omega_k,\omega_1',\cdots,\omega_n'$ in $\Omega$.
  \item Let $\Omega$ be a set. The {\bf shuffle product} is recursively defined by
  \begin{align*}
   \emptyset\shuffle w = w\shuffle\emptyset & = w, \\
   \left((\omega)\sqcup w\right) \shuffle \left((\omega')\sqcup w'\right) & = (\omega)\sqcup\left[w \shuffle \left((\omega')\sqcup w'\right) \right] + (\omega')\sqcup\left[\left((\omega)\sqcup w\right)\shuffle w'\right].
  \end{align*}
  extended by bilinearity to a product $\shuffle:\calw_\Omega\times\calw_\Omega\longrightarrow\calw_\Omega$.
  \item Let $(\Omega,+)$ be a commutative semigroup (i.e. $+$ is associative and commutative). The {\bf stuffle product} is recursively defined by
  \begin{align*}
   \emptyset\stuffle w = w\stuffle\emptyset & = w, \\
   \left((\omega)\sqcup w\right) \stuffle \left((\omega')\sqcup w'\right) & = (\omega)\sqcup\left[w \stuffle \left((\omega')\sqcup w'\right) \right] + (\omega')\sqcup\left[\left((\omega)\sqcup w\right)\stuffle w'\right]+(\omega+\omega')\sqcup[w\stuffle w'].
  \end{align*}
 \end{itemize}
\end{defn}
It is easy to show that the shuffle and the stuffle products are commutative. It is also a standard exercise to check that they are associative.

Another important definition for the theory of MZVs is the one of convergent words.
\begin{defn}
 A word $w\in\calw_{\{x,y\}}$ is called {\bf convergent} if it is either empty or starts with an $x$ and ends with a $y$. We write $\calw_{\{x,y\}}^{\rm conv}$ the set of convergent words in $\calw_{\{x,y\}}$.
 
 A word $w\in\calw_{\N^*}$ is called {\bf convergent} if it is either empty or has not $1$ as its first letter. We write $\calw_{\N^*}^{\rm conv}$ the set of convergent words in $\calw_{\N^*}$.
\end{defn}
A crucial result in the theory of MZVs is then
\begin{thm}
 The set $\calw_{\{x,y\}}^{\rm conv}$ (resp. $\calw_{\N^*}^{\rm conv}$) is a sub-algebra for the shuffle (resp. stuffle) product. The map $\zeta_\shuffle$ (resp. $\zeta_\stuffle$) is an algebra morphism for the shuffle (resp. stuffle) product: for any words $w_1$, $w_2$ in $\calw_{\{x,y\}}^{\rm conv}$ (resp. $\calw_{\N^*}^{\rm conv}$)
 \begin{equation} \label{eq:stuffle_MZVs}
 \zeta_\shuffle(w_1\shuffle w_2)=\zeta_\shuffle(w_1)\zeta_\shuffle( w_2)\qquad\text{(resp. }\zeta_\stuffle(w_1\stuffle w_2)=\zeta_\stuffle(w_1)\zeta_\stuffle( w_2)\quad\text{)}.
\end{equation}
\end{thm}
To rigorously state the last property of MZVs we will generalise to rooted forests, namely Kontsevitch's relation, we need to define one more object.
\begin{defn}
 The {\bf binarisation map} $\s:\calw_{\N^*}\longrightarrow\calw_{\{x,y\}}$ is the linear map defined by $\s(\emptyset)=\emptyset$ and for any non empty word
 \begin{equation*}
  \s(n_1\cdots n_k):=(\underbrace{x\cdots x}_{n_1-1}y\cdots \underbrace{x\cdots x}_{n_k-1}y).
 \end{equation*}
\end{defn}
Let us write some examples of the action of $\s$:
\begin{equation*}
 \s(23)=(xyxxy),\qquad\s(131)=(yxxyy),\qquad\s(4136)=(xxxyyxxyxxxxxy).
\end{equation*}
We then have
\begin{thm}[Kontsevitch's relation]
 $\s$ maps convergent words $\calw_{\N^*}^{\rm conv}$ to convergent words $\calw_{\{x,y\}}^{\rm conv}$ and
 \begin{equation} \label{eq:Kontsevitch}
 \zeta_\stuffle = \zeta_\shuffle\circ\s.
\end{equation}
\end{thm}

\subsection{Arborified zeta values}

We start we recalling some basic definition of graph theory (see for example \cite{Wi96}) which will be useful in the sequel.
\begin{defn}
 \begin{itemize}
  \item A {\bf graph} is a pair of finite sets $G:=(V(G),E(G))$ with $E(G)\subseteq V(G)\times V(G)$. $E(G)$ is the set of edges of the graph and $V(G)$ the set of vertices of the graph. 
  \item A {\bf path} in a graph $G$ is a finite sequence of elements of $V(G)$: $p=(v_1,\cdots,v_n)$ such that for all $i\in[[1,n-1]]$, $(v_i,v_{i+1})$ is an edge of $G$.  
  By convention, there is always a path between a vertex and itself. 
 \end{itemize}
\end{defn}
We have actually defined oriented graphs since we will only work with such graphs. In particular, ``graph'' will always be used for ``oriented graph''.

In this paper, we will be chiefly concerned with rooted trees so let us introduce related vocabulary and notations that will be used in the rest of this document.
\begin{defn}
 \begin{itemize}
  \item For a graph $G=(V(G);E(G))$, let $\preceq$ be the binary relation on $V(G)$ defined by: $v_1\preceq v_2$ if, and only if, it exists a path from $v_1$ to $v_2$. We also denote by $\succeq$ the inverse relation. A {\bf directed acyclic graph} (DAG for short) is a graph such that $(V(G),\preceq)$ is a poset.
  
  \item A {\bf forest} is a DAG such that there is at most one path between two vertices. A {\bf rooted forest} is a forest whose connected components each have a unique minimal element. These elements are called {\bf roots}. A {\bf rooted tree} is a connected rooted forest.
  \item Let $F$ be a rooted forest and $v_1,v_2$ be two vertices of $F$. If $(v_1,v_2)\in E(F)$\footnote{which implies $v_1\preceq v_2$}, then $v_1$ is called the {\bf direct ancestor} of $v_2$ and $v_2$ a {\bf direct descendant} of $v_1$. We write $v_1=a(v_2)$
  \item If a vertex of a forest $F$ has more than one direct descendant it is called a {\bf branching vertex} of $F$. Furthermore, a vertex that is maximal for the partial order $\preceq$ is called a {\bf leaf}.
  \item Let $\Omega$ be a set. A {\bf $\Omega$-decorated rooted forest} is a rooted forest $F$ together with a {\bf decoration map} 
  $d:V(F)\mapsto\Omega$. For a rooted forest $(F,d_F)$ decorated by $\Omega$ and $\omega\in\Omega$, we write $V_\omega(F)\subseteq V(F)$ the set of vertices of $F$ decorated by $\omega$.
  \item Two rooted forests $F$ and $F'$ (resp. decorated rooted forests $(F,d_F)$ and $(F',d_{F'})$) are {\bf isomorphic} if there  a poset isomorphism $f_V:V(F)\longrightarrow V(F')$ (resp. and $d_F = d_{F'}\circ f_V$) exists.
 \end{itemize}
 We write $\calf$ (resp. $\calf_\Omega$) the commutative algebra freely generated by isomorphism classes of rooted forests (resp. by $\Omega$-decorated rooted) with the product given by the concatenation of graphs. We also use $\calt$ and $\calt_\Omega$ for the vector spaces of isomorphism classes of rooted trees and $\Omega$-decorated rooted trees respectively.
\end{defn}
As usual, we always consider isomorphism classes of rooted forests and therefore identify trees and forests with their classes. Furthermore, when there is no need to specify the decoration map we simply write $F$ for a decorated forest $(F,d)$.

Here we will be primarily interested by rooted forests decorated by a set of two elements $\{x,y\}$. As said before, for such a forest $F$ we write $V_x(F)$ and $V_y(F)$ the set of vertices of $F$ decorated by $x$ and $y$ respectively. The next definition characterises the rooted forests to which we will be able to attach a iterated integral. It is taken from \cite{Cl20}.


\begin{defn} \label{def:conv_forest_integral}
 A rooted forest decorated by $\Omega=\{x,y\}$ is {\bf convergent} if all its leaves and branching vertices are decorated by $y$ and all its roots are decorated by $x$. We write $\calf_{\{x,y\}}^{\rm conv}$ the set of convergent rooted forests decorated by $\{x,y\}$.
 
 A rooted forest decorated by $\Omega=\N^*$ is {\bf convergent} if none of its roots are decorated by $1$. We write $\calf_{\N^*}^{\rm conv}$ the set of convergent rooted forests decorated by $\N^*$.
\end{defn}
Notice that this definition implies in particular that the roots of a convergent forest in $\calf_{\{x,y\}}$ cannot be branching vertices. Now we can define the first family of objects that we will study in this paper.

\begin{defn}[\cite{Ma13}]
 Let $(F,d_F)$ be a convergent rooted forest decorated by $\{x,y\}$. The {\bf arborified zeta value} associated to $(F,d_F)$ is defined by
 \begin{equation*}
  \zeta_\shuffle^T(F) := \int_{\Delta_F}\prod_{v\in V(F)} \omega_v(z_v)
 \end{equation*}
 with 
 \begin{align*}
  \omega_v(z_v)=
  \begin{cases}
   & \frac{dz_v}{z_v}\quad\text{if }d_F(v)=x \\
   & \frac{dz_v}{1-z_v}\quad\text{if }d_F(v)= y
  \end{cases}
 \end{align*}
 and $\Delta_F\subset[0,1]^{|V(F)|}$ defined by
 \begin{equation*}
  [0,1]^{|V(F)|}\ni(z_{v_1},\cdots,z_{v_{|V(F)|}})\in\Delta_F:\Longleftrightarrow \left(v_i\preceq v_j \Leftrightarrow z_{v_j}\leq z_{v_i}\right).
 \end{equation*}
\end{defn}
\begin{rk}
 The integral of arborified zeta values can converge even if $F$ is not a convergent forest. In particular, one can relax the  condition that its branching vertices are decorated by $x$ and still have a convergent integral. This is clear from the proof of convergence of arborified zeta values made in \cite[Definition-Proposition 4.8 and Lemma 4.13]{Cl20}.
 
 We focus on these forests for two reasons. First, these convergent rooted forests are the image of the branched binarisation map $\s^T:\calf_{\N^*}\longrightarrow\calf_{\{x,y\}}$ (see \cite[Definition A.1]{Cl20}). Second, for these forests, the associated arborified zeta values can be written as a multiple series over a domain given by the rooted forest they are attached to. This is one of the main results of this paper.
\end{rk}
Notice that $\zeta_\shuffle^T(F)$ should be written $\zeta_\shuffle^T(F,d_F)$ to be completely rigorous. We drop the reference to the decoration map in order to simplify notations. Notice further that we inverse the partial order of vertices of the forest and the partial order on the attached integration variable to follow \cite{Cl20}. This is simply done to simplify notations, and the other choice was made, for example in \cite{Ma13}.

Before we proceed further, let us recall that for any $\omega\in\Omega$ the {\bf grafting operator} $B_+^\omega:\calf_{\Omega}\longrightarrow\calt_\Omega$ which is a linear operator that, to any rooted forest $F=T_1\cdots T_k$, associates the decorated tree obtained from $F$ by adding a root decorated by $\omega$ linked to each root of $T_i$ for $i$ going from $1$ to $k$. We can now introduce the shuffle of trees that have been defined in \cite{Cl20}.
\begin{defn}
 Let $\Omega$ be a set. The {\bf shuffle product on forests} $\shuffle^T$ of two forests $F$ and 
 $F'$ is defined recursively on $|F|+|F'|$. 
 
 If $|F|+|F'|=0$ (and thus $F=F'=\emptyset$), we set $\emptyset\shuffle^T \emptyset = \emptyset$.
 
 For $N\in\N$, assume the shuffle products of forests has been defined on every forests $f,f'$ such that $|f|+|f'|\leq N$. Then for any two forests $F,F'$ such that $|F|+|F'|=N+1$;
 \begin{itemize}
  \item If $F'=\emptyset$, set $\emptyset\shuffle^T F=F\shuffle^T \emptyset=F$;
  \item If $F$ or $F'$ is not a tree, then we can write $F$ and $f$ uniquely as a concatenation of trees: $F=T_1\cdots T_k$ and $F'=t_1\cdots t_n$ with the $T_i$s and $t_j$s 
  nonempty, $k+n\geq3$ and set 
  \begin{equation*}
   F\shuffle^T F' = \frac{1}{kn}\sum_{i=1}^k\sum_{j=1}^n\left((T_i\shuffle^T t_j)T_1\cdots\widehat{T_i}\cdots T_n t_1\cdots\widehat{t_j}\cdots t_k\right)
  \end{equation*}
  where $T_1\cdots\widehat{T_i}\cdots T_n$ stands for the concatenation of the trees $T_1,\cdots,T_n$ without the tree $T_i$.
  \item If $F=T = B_+^a(f)$ and $F'=T'=B_+^{a'}(f')$ are two nonempty trees, we set 
  \begin{equation*}
   T\shuffle^T T' = B_+^a(f\shuffle^T T') + B_+^{a'}(T\shuffle^T f')
  \end{equation*}
 \end{itemize}
\end{defn}
It is easy to show that $\shuffle^T$ is commutative, but it is not associative (see \cite[Counterexample 5.7]{Cl20}). However an important result of \cite{Cl20} is
\begin{thm} (\cite[Theorem 5.11]{Cl20})
 $\calf_{\{x,y\}}^{\rm conv}$ is a subalgebra for the shuffle product of forests $\shuffle^T$. Furthermore, for any convergent forests $F_1$ and $F_2$ in $\calf_{\{x,y\}}^{\rm conv}$ we have
 \begin{equation*}
  \zeta_\shuffle^T(F_1\shuffle^T F_2)=\zeta_\shuffle^T(F_1)\zeta_\shuffle^T(F_2).
 \end{equation*}
\end{thm}
In other words, the linear map $\zeta_\shuffle^T:\calf_{\{x,y\}}^{\rm conv}\longrightarrow\R$ which sends a convergent forest $F\in\calf_{\{x,y\}}^{\rm conv}$ to $\zeta_\shuffle^T(F)$ is an algebra morphism for the shuffle product of forests.

We need to introduce one more notion in this section.


\begin{defn} \label{defn:flattening}
 Let $\Omega$ be a set, the {\bf flattening map} $fl_0:\mathcal{F}_\Omega\longrightarrow\calw_\Omega$  from the algebra of rooted forests decorated by $\Omega$ and 
 the algebra of words $\mathcal{W}_\Omega$  written in the alphabet $\Omega$ is recursively defined by
 \begin{equation*}
  fl_0(\emptyset)=\emptyset,\quad fl_0(F_1F_2) = fl_0(F_1)\shuffle fl_0(F_2),\quad fl_0(B_+^\omega(F))=(\omega)\sqcup fl_0(F)
 \end{equation*}
 extended by linearity to a map on $\calf_\Omega$.
\end{defn}
\begin{rk}
 In \cite{Ma13}, the flattening map is called the (simple) arborification, following Ecalle in \cite{Ecalle}. It also appears in \cite{Ya20}. We chose instead to follow \cite{CGPZ18} and \cite{Cl20}. Furthermore, together with its weighted versions, it can be defined from a universal property of the algebra of rooted forests (see \cite[Definition 2.14]{Cl20}). Since this formulation requires to define more structures, we have opted here for this more pedestrian approach in order to keep the reminders of this section within reasonable length.
\end{rk}
The following result was shown in \cite{Cl20} (Lemma 4.9 and Theorem 4.15) although a version for divergent AZVs was already presented in the earlier work \cite{CGPZ18}. It refines \cite[Corollary 2.4]{Ya14} and was also stated in \cite{Ya20}. 
\begin{thm} \label{thm:flattening}
 The flattening map $fl_0$ maps convergent forests to convergent words and for any convergent rooted forest $(F,d_F)\in\calf^{\rm conv}_{\{x,y\}}$, the arborified zeta value $\zeta_\shuffle^T(F)$ is a finite linear combination of MZVs with rational coefficients given by
 \begin{equation*}
  \zeta_\shuffle^T(F) = \zeta_\shuffle(fl_0(F))
 \end{equation*}
 with $\zeta_\shuffle$ the MZV map defined in Equation \eqref{eq:integral_shuffle_MZV}.
\end{thm}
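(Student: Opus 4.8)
The plan is to slice the integration domain $\Delta_F$ into simplicial chambers, one for each linear extension of the poset $(V(F),\preceq)$, evaluate the integral over each chamber as an iterated integral of the form \eqref{eq:integral_shuffle_MZV}, and then recognise the resulting sum of words as $fl(F)$ by an induction mirroring the recursive definition of the flattening map. Throughout I would use that the integral defining $\zeta(F)$ is \emph{absolutely} convergent, which is part of the analysis carried out in \cite[Definition-Proposition 4.8, Lemma 4.13]{Cl20}; this is exactly what makes the manipulations of the domain below legitimate. The diagonals $\{z_u=z_v\}$ form a null set, so almost every point of $[0,1]^{|V(F)|}$ has pairwise distinct coordinates and hence orders $V(F)$ totally; the condition defining $\Delta_F$ says precisely that this total order refines $\preceq$. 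Conversely, a linear extension $\sigma=(v_{\sigma(1)},\dots,v_{\sigma(n)})$ of $(V(F),\preceq)$, listed from a minimal to a maximal vertex, carves out the chamber
\[
 \Delta_\sigma:=\bigl\{\,(z_v)\in[0,1]^{|V(F)|}\ :\ 1\geq z_{v_{\sigma(1)}}\geq\cdots\geq z_{v_{\sigma(n)}}\geq 0\,\bigr\}\subseteq\Delta_F,
\]
and the $\Delta_\sigma$ cover $\Delta_F$ with pairwise intersections inside diagonals, hence of measure zero; absolute convergence then yields $\zeta(F)=\sum_\sigma\int_{\Delta_\sigma}\prod_{v\in V(F)}\omega_v(z_v)$, the finite sum running over all linear extensions $\sigma$.

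Next I would evaluate a single chamber integral. Relabelling $t_i:=z_{v_{\sigma(i)}}$ turns it into $\int_{1\geq t_1\geq\cdots\geq t_n\geq 0}\prod_{i=1}^{n}\omega_{\epsilon_i}(t_i)$ with $\epsilon_i:=d_F(v_{\sigma(i)})$, i.e. into $\zeta_\shuffle(w_\sigma)$ for the word $w_\sigma:=\bigl(d_F(v_{\sigma(1)})\cdots d_F(v_{\sigma(n)})\bigr)$. Here $v_{\sigma(1)}$ is a minimal element of $(V(F),\preceq)$, hence a root, so $d_F(v_{\sigma(1)})=x$; and $v_{\sigma(n)}$ is maximal, hence a leaf, so $d_F(v_{\sigma(n)})=y$. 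Thus every $w_\sigma$ is a convergent word and $\zeta_\shuffle(w_\sigma)$ is a genuine MZV.

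It then remains to prove $fl(F)=\sum_\sigma w_\sigma$ in $\calw_{\{x,y\}}$, which I would do by induction on $|V(F)|$ along Definition \ref{defn:flattening}, the empty forest being the trivial base case. If $F=T_1\cdots T_k$ with $k\geq 2$, a linear extension of the disjoint union of posets $V(T_1)\cup\cdots\cup V(T_k)$ amounts to a choice of a linear extension of each $V(T_j)$ together with an interleaving of the resulting chains, and summing the associated words over all interleavings is exactly the defining recursion of the shuffle product; hence $\sum_\sigma w_\sigma=fl(T_1)\shuffle\cdots\shuffle fl(T_k)=fl(F)$ by induction. If $F=B_+^\omega(F')$ is a tree, its root is the unique minimal element, so every linear extension of $F$ is the root followed by a linear extension of $F'$, whence $\sum_\sigma w_\sigma=(\omega)\sqcup\sum_{\sigma'}w_{\sigma'}=(\omega)\sqcup fl(F')=fl(F)$. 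In particular every word occurring in $fl(F)$ equals some $w_\sigma$, hence is convergent --- the first assertion of the theorem --- and, combining the three steps, $\zeta(F)=\sum_\sigma\zeta_\shuffle(w_\sigma)=\zeta_\shuffle(fl(F))$, a finite linear combination of MZVs with positive integer, in particular rational, coefficients.

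The step I expect to be the main obstacle is not the combinatorial identity, which is a clean induction provided one counts words \emph{with multiplicity} on both sides (automatic, since both the shuffle product and the chamber count carry multiplicities), but the analytic justification of the chamber decomposition: one must know that $\zeta(F)$ converges absolutely before splitting the integral over the finitely many chambers and discarding the measure-zero overlaps and diagonals, and this is precisely where the convergence results of \cite{Cl20} enter.
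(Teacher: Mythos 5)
Your proof is correct. Note that the paper itself does not prove Theorem~\ref{thm:flattening}: it simply cites \cite[Lemma 4.9 and Theorem 4.15]{Cl20}, so there is no in-paper argument to compare against. What you give --- decomposing $\Delta_F$ (up to the null set of diagonals) into simplicial chambers indexed by linear extensions of $(V(F),\preceq)$, identifying each chamber integral with $\zeta_\shuffle(w_\sigma)$, and proving $fl(F)=\sum_\sigma w_\sigma$ by the standard induction that matches the two recursions (interleavings $\leftrightarrow$ shuffle for a product of trees, root-first $\leftrightarrow$ concatenation for $B_+^\omega$) --- is the canonical route and the one \cite{Cl20} follows. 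The only point worth emphasising, which you in fact already flag, is that the integrand $\prod_v\omega_v$ is non-negative, so ``absolute'' convergence is automatic given convergence and the chamber decomposition is licit by Tonelli rather than needing a separate dominating argument; and that the convergence of each chamber integral is re-derived from the fact that every $w_\sigma$ begins with the $x$-decorated minimal vertex and ends with a $y$-decorated maximal vertex, which is exactly Definition~\ref{def:conv_forest_integral}.
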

\begin{rk}
 In \cite{Cl20}, an equivalent of the stuffle MZVs (Equation \ref{eq:sum_stuffle_MZV}) for rooted forests decorated by $\N^*$ was also build and studied. It was shown this object has properties similar to $\zeta_\shuffle^T$. Namely, it is an algebra morphism for a stuffle product of rooted forest and can be related to stuffle MZVs via a contracting flattening map. However, it was also shown that these two AZVs are \emph{not} related by the most natural generalisation of Kontsevitch's relation \eqref{eq:Kontsevitch}. This is why we do not introduce these objects here and will instead replace them by a new generalisation of MVZs for which Kontsevitch's relation holds.
\end{rk}

\subsection{Series representation of AZVs}

In order to prove the existence of a series representation for AZVs, we need to introduce a few more notions.
\begin{defn}
 A {\bf segment} of a rooted forest $(F,d_F)$ decorated by $\{x,y\}$ is a non-empty path $s_v=(v_1,\cdots,v_n=v)$ such that $d_F(v_n)=y$, $d_F(v_i)=x$ for any $i$ in $\{1,\cdots,n-1\}$ and $d(a(v_1))=y$, with $a(v_1)$ the direct ancestor of $v_1$. We call the number $n$ the {\bf length} of the segment $s_v$ and write it $|s_v|$.
 
 We write $S(F):=\{s_v|v\in V_y(F)\}$ the set of segments of a rooted forest $F$.
\end{defn}
In words: a segment $s_v$ of a rooted forest is a path in this rooted forest, which ends at the vertex $v$ decorated by $y$ and starts just above the first ancestor of $v$ being also decorated by $y$.

The set $S(F)$ inherits a poset structure from the poset structure of $V(F)$. We also denote this partial order relation by $\preceq$: $s_v\preceq s_{v'}:\Longleftrightarrow v\preceq v'$. This allows us to define the depth of a segment.
\begin{defn}
 The {\bf depths} of the segments of a decorated rooted forest $F$ decorated by $\{x,y\}$ are recursively defined by:
 \begin{itemize}
  \item depth$(s_v)=0$ if $v$ is a leaf of $F$,
  \item depth$(s_v)=\max\{{\rm depth}(s_{v'})|v'\neq v\wedge s_v\preceq s_{v'}\}+1$.
 \end{itemize}
 We also set $N_F:=\max\{{\rm depth}(s_v)|s_v\in S(F)\}$ the maximal depth of a segment of $F$. For any $n\in\{0,\cdots,N_F\}$ we set 
 \begin{equation*}
  S_n(F) := \{s_v\in S(F)|{\rm depth}(s_v)=n\},
 \end{equation*}
  we further set:
 \begin{equation*}
  S^n(F):=\bigcup_{i=0}^n S_i(F)
 \end{equation*}
 for any $n$ in $\{0,\cdots,N_F\}$. For any such $n$ we also write
 \begin{equation*}
  ||S_n(F)||:=\sum_{s_v\in S_n(F)} |s_v| \quad\text{and}\quad ||S^n(F)||:=\sum_{s_v\in S^n(F)} |s_v|.
 \end{equation*}

\end{defn}
Notice that depth of segments of a rooted forest are well-defined because we have taken our forests to be finite.

We are now able to prove the following
\begin{thm} \label{thm:integral_sum}
 For any convergent forest $F$ the corresponding arborified zeta values admits the following series representation:
 \begin{equation*}
  \zeta_\shuffle^T(F) = \sum_{\substack{ n_v\geq1\\v\in V_y(F)}}\prod_{v\in V_y(F)}\left(\sum_{\substack{v'\in V_y(F)\\ v'\succeq v}}n_{v'}\right)^{-|s_v|}
 \end{equation*}
 where for $s_v=(v_1,\cdots,v_n)$ we set $|s_v|=n$ and $v\in V_y(F)$ in the first sum means that this series has a summation variable for each $v\in V_y(F)$.
\end{thm}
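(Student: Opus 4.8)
The plan is to compute the integral $\zeta(F) = \int_{\Delta_F} \prod_{v \in V(F)} \omega_v(z_v)$ by first integrating out the variables attached to vertices decorated by $x$, thereby reducing the integral to one indexed only by $V_y(F)$, and then expanding the remaining integrand as a geometric series. The key organizing principle is that the $x$-decorated vertices between two consecutive $y$-vertices along a branch group together into segments; integrating a string of $\tfrac{dz}{z}$ factors over a simplex produces a power of a logarithm, and it is precisely here that the exponent $|s_v|$ will appear.

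First I would set up the iterated integration from the leaves downward, using that $F$ is convergent so that leaves and branching vertices are decorated by $y$ and roots by $x$. Fix a $y$-vertex $v$ with segment $s_v = (v_1, \dots, v_n = v)$, so that $v_1, \dots, v_{n-1}$ carry $x$ and $a(v_1)$ carries $y$ (or $v_1$ is a root, in which case the "outer" variable is $1$). The constraint defining $\Delta_F$ forces $z_{a(v_1)} \geq z_{v_1} \geq \cdots \geq z_{v_n} \geq (\text{all } z_{v'} \text{ for } v' \succeq v)$. Integrating the factors $\tfrac{dz_{v_1}}{z_{v_1}} \cdots \tfrac{dz_{v_{n-1}}}{z_{v_{n-1}}}$ over the region $z_{a(v_1)} \geq z_{v_1} \geq \cdots \geq z_{v_{n-1}} \geq z_{v_n}$ gives $\tfrac{1}{(n-1)!}\bigl(\log(z_{a(v_1)}/z_{v_n})\bigr)^{n-1}$ — a standard simplex computation. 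This leaves an integral purely over the variables $z_v$, $v \in V_y(F)$, subject to the induced poset constraints $z_{v'} \leq z_v$ whenever $v' \succeq v$, with integrand $\prod_{v \in V_y(F)} \tfrac{1}{(|s_v|-1)!} \tfrac{(\log(z_{a(s_v)}/z_v))^{|s_v|-1}}{1-z_v} \, dz_v$, where $z_{a(s_v)}$ denotes the variable of the $y$-vertex directly above the segment (or $1$ at a root).

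Next I would expand each $\tfrac{1}{1-z_v} = \sum_{m_v \geq 0} z_v^{m_v}$ and recognize that the resulting integrals over the nested-simplex region in the $z_v$ variables, against the log powers, are computed by the standard identity $\int_0^{a} z^{m}\bigl(\log(a/z)\bigr)^{k}\,\tfrac{dz}{z^0}$-type formulas — more precisely one uses $\int_0^{1} z^{m-1}(\log(1/z))^{k}\,dz = k!/m^{k+1}$ and its nested analogue, integrating from the leaves down so that at each $y$-vertex the "mass" accumulated below it is the sum $\sum_{v' \succeq v} n_{v'}$ of the summation indices. Setting $n_v := m_v + 1 \geq 1$, each segment contributes a factor $(|s_v|-1)! \big/ \bigl(\sum_{v' \succeq v} n_{v'}\bigr)^{|s_v|}$; the factorials cancel the $1/(|s_v|-1)!$ from the log-power normalization, and summing over all $n_v \geq 1$ yields exactly the claimed series. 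I would carry out this last step by induction on $N_F$ (or on $|V_y(F)|$), peeling off the maximal-depth segments first, so that the inductive hypothesis handles the "truncated" forest with the outer variable of the peeled segment playing the role of a root.

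The main obstacle I anticipate is bookkeeping rather than conceptual: one must carefully track how the upper limit of each variable's integration is itself a previously-integrated variable (the one attached to the nearest $y$-ancestor), and verify that after all the geometric-series expansions the nested integrals factor along the tree in the way the formula asserts — in particular that the exponent at $v$ sees the sum of $n_{v'}$ over \emph{all} $v' \succeq v$, not just direct descendants. Handling branching vertices correctly (where several sub-branches feed into the same $y$-vertex and their masses add) and the boundary case of roots (upper limit $1$, no accumulated outer mass) is where care is needed; organizing the computation as a downward induction on segment depth, as in the definition of $\mathrm{depth}(s_v)$ and $S_n(F)$ already introduced, should make this transparent.
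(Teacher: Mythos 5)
Your proposal is correct and takes essentially the same route as the paper's proof: reduce to one tree, decompose the integration domain by segments, work by induction on segment depth from the leaves toward the root, expand each $\frac{1}{1-z_v}$ as a geometric series, and pass the accumulated exponent $\sum_{v'\succeq v}n_{v'}$ down to the root segment where the outer bound is $1$. The only difference is cosmetic: you first integrate the $x$-variables of a segment into $\frac{1}{(|s_v|-1)!}\bigl(\log(z_{a(s_v)}/z_v)\bigr)^{|s_v|-1}$ and then use $\int_0^1 z^{m-1}(\log(1/z))^k\,dz=k!/m^{k+1}$, whereas the paper packages both steps into the single per-segment identity $\int_{0\leq z_p\leq\cdots\leq z_1\leq Z}\frac{dz_1}{z_1}\cdots\frac{dz_{p-1}}{z_{p-1}}\frac{dz_p}{1-z_p}\,z_p^{A}=\sum_{n\geq1}\frac{Z^{n+A}}{(n+A)^{p}}$.
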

\begin{proof}
 Let $F$ be a convergent forest. Since the map $F\mapsto\zeta_\shuffle^T(F)$ is an algebra morphism for the concatenation product of trees, it is enough to show that the theorem holds for $F$ a rooted tree. Thus we can assume without loss of generality that $F$ is a rooted tree.
 
 Let $N_F$ be the maximal depth of the segments of this tree. If $N_F=0$, the theorem reduces to the usual series representation of a Riemann zeta. 
 
 If $N_F\geq1$, since we are working with convergent integrals we can use Fubini's theorem to regroup integrations of the segment. This mean that we can write the arborified zeta associated to $F$ as
 \begin{equation} \label{eq:step_zero}
  \zeta_\shuffle^T(F) = \int_{\Delta_F}\prod_{n=0}^{N_F}\prod_{s_v\in S_n(F)} d\omega_{s_v}
 \end{equation}
 where, for $s_v=(w_1,\cdots,w_{p})\in S_n(F)$ we have set 
 \begin{equation*}
  d\omega_{s_v}:=\frac{dz_{w_1}}{z_{w_1}}\cdots\frac{dz_{w_{p-1}}}{z_{w_{p-1}}}\frac{dz_{w_p}}{1-z_{w_p}}
 \end{equation*}
 (with an obvious abuse of notation if $p=1$). Now recall that for any $A\in\R_+$, $p\geq1$ and any $Z\in[0,1]$ ($Z\neq 1$ if $p=1$) we have
 \begin{equation} \label{eq:lemma_trad}
  \int_{0\leq z_p\leq\cdots\leq z_1\leq Z}\frac{dz_1}{z_1}\cdots\frac{dz_{p-1}}{z_{p-1}}\frac{dz_p}{1-z_p} (z_p)^A = \sum_{n=1}^{+\infty} \frac{Z^{n+A}}{(n+A)^p}
 \end{equation}
 (again with obvious abuses of notations when $p=1$). This classical result follows from the theorem of dominated convergence and the Taylor expansion of the function $x\mapsto (1-x)^{-1}$.
 
 We can now use \eqref{eq:lemma_trad} with $A=0$ in \eqref{eq:step_zero} to integrate all the variable attached to vertices belonging to a segment of depth zero. We obtain 
 \begin{equation} \label{eq:step:one}
  \zeta_\shuffle^T(F) = \sum_{\substack{n_v=1, \\ v\in V_y(F)| s_v\in S_0(F)}}^{+\infty} \prod_{v\in V_y(F)|s_v\in S_0(F)}(n_v)^{-|s_v|}\int_{\Delta_F\setminus S_0(F)}\prod_{n=1}^{N_F}\prod_{s_v\in S_n(F)} d\omega_{s_v}\prod_{v\in V_y(F)|s_v\in S_1(F)}(z_v)^{\sum_{\substack{v'\in V_y(F)\\v'\succ v}}n_{v'}}
 \end{equation}
 where $v'\succ v$ means that $v'$ is a descendant of $v$ that is distinct from $v$ 
 and where we have set 
 \begin{align*}
  & [0,1]^{|V(F)|-||S_0(F)||}\ni(z_{v_1},\cdots,z_{v_p})\in\Delta_F\setminus S_0(F) \\
  :\Longleftrightarrow & \left(\{v_1,\cdots,v_p\}=V(F)\setminus\{v'\in s_v|s_v \in S_0(F)\}~\wedge~(v_i\preceq v_j \Leftrightarrow z_{v_j}\leq z_{v_i})\right).
 \end{align*}
 Now for any $k\in\{0,\cdots,N_F-1\}$ we set 
 \begin{align*}
  & [0,1]^{|V(F)|-||S^k(F)||}\ni(z_{v_1},\cdots,z_{v_p})\in\Delta_F\setminus S^k(F) \\
  :\Longleftrightarrow & \left(\{v_1,\cdots,v_p\}=V(F)\setminus \{v'\in s_v|s_v \in S^k(F)\}~\wedge~(v_i\preceq v_j \Leftrightarrow z_{v_j}\leq z_{v_i})\right)
 \end{align*}
 (notice that we replaced $S_0(F)$ by $S^k(F)$). Let us prove by induction over $k$ that for any $k\in \{0,\cdots,N_F-1\}$ we have
 \begin{align} \label{eq:step_rec}
  \zeta_\shuffle^T(F) & = \sum_{\substack{n_v=1, \\ v\in V_y(F)| s_v\in S^k(F)}}^{+\infty} \prod_{v\in V_y(F)|s_v\in S^k(F)}\left(\sum_{\substack{v'\in S^k(F) \\ v'\succeq v}}n_{v'}\right)^{-|s_v|} \\ 
   & \times \underbrace{\int_{\Delta_F\setminus S^k(F)}\prod_{n=k+1}^{N_F}\prod_{s_v\in S_n(F)} d\omega_{s_v}\prod_{v\in V_y(F)|s_v\in S_{k+1}(F)}(z_v)^{\sum_{\substack{v'\in V_y(F) \\v'\succ v}}n_{v'}}}_{=:I_{F,k}}.  \nonumber
 \end{align}
 First, since $S_0(F)=S^0(F)$ and $\Delta_F\setminus S_0(F)=\Delta_F\setminus S^0(F)$, Equation \eqref{eq:step:one} is exactly Equation \eqref{eq:step_rec} for $k=0$. Then if $N_F=1$, we have proven Equation \eqref{eq:step_rec} in every cases of interests. If $N_F\geq2$ then let us assume that Equation \eqref{eq:step_rec} holds for $k\in\{0,\cdots,N_F-2\}$. We then have, once again from the Taylor expansion of the function $x\to(1-x)^{-1}$ and the dominated convergence theorem
 \begin{align*}
  I_{F,k} & = \int_{\Delta_F\setminus S^k(F)}\left(\prod_{n=k+2}^{N_F}\prod_{s_v\in S_n(F)} d\omega_{s_v}\right)\left(\prod_{s_v\in S_{k+1}(F)} d\omega_{s_v}\right)\prod_{v\in V_y(F)|s_v\in S_{k+1}(F)}(z_v)^{\sum_{\substack{v'\in V_y(F) \\v'\succ v}}n_{v'}} \\
  & = \sum_{\substack{n_v=0, \\ v\in V_y(F)| s_v\in S_{k+1}(F)}}^{+\infty} \int_{\Delta_F\setminus S^k(F)}\left(\prod_{n=k+2}^{N_F}\prod_{s_v\in S_n(F)} d\omega_{s_v}\right)  \\
  \times & \prod_{\substack{v\in V_y(F)| \\ s_v=(\vec{\alpha},v)\in S_{k+1}(F)}} \frac{d\vec{z}_{\vec{\alpha}}}{\vec{z}_{\vec\alpha}}dz_v(z_v)^{n_v} \prod_{v\in V_y(F)|s_v\in S_{k+1}(F)}(z_v)^{\sum_{\substack{v'\in V_y(F) \\v'\succ v}}n_{v'}}
 \end{align*}
 with the obvious notation that $\frac{d\vec{z}_{\vec{\alpha}}}{\vec{z}_{\vec\alpha}}$ is a product of $dz/z$. We can now merge the two last products to obtain
 \begin{equation*}
  I_{F,k} = \sum_{\substack{n_v=0, \\ v\in V_y(F)| s_v\in S_{k+1}(F)}}^{+\infty} \int_{\Delta_F\setminus S^k(F)}\left(\prod_{n=k+2}^{N_F}\prod_{s_v\in S_n(F)} d\omega_{s_v}\right) 
  \prod_{\substack{v\in V_y(F)| \\ s_v=(\vec{\alpha},v)\in S_{k+1}(F)}} \frac{d\vec{z}_{\vec{\alpha}}}{\vec{z}_{\vec\alpha}}dz_v(z_v)^{\sum_{\substack{v'\in V_y(F) \\v'\succeq v}}n_{v'}}.
 \end{equation*}
 Finally integrating the variable attached to vertices belonging to segments of depth $k$ and switching each of the summation variable by one we obtain
 \begin{equation*}
  I_{F,k} = \sum_{\substack{n_v=1, \\ v\in V_y(F)| s_v\in S_{k+1}(F)}}^{+\infty} \prod_{v\in V_y(F)|s_v\in S_{k+1}(F)}\left(\sum_{\substack{v'\in V_y(F) \\ v'\succeq v}}n_{v'}\right)^{-|s_v|}
  \underbrace{\int_{\Delta_F\setminus S^{k+1}(F)}\left(\prod_{n=k+2}^{N_F}\prod_{s_v\in S_n(F)} d\omega_{s_v}\right)}_{I_{F,k+1}}.
 \end{equation*}
 Notice that this whole computation consisted essentially into using Formula \eqref{eq:lemma_trad} for each of the segments of depth exactly $k$. In any case, plugging this expression for $I_{F,k}$ back into \eqref{eq:step_rec} we obtain exactly the same equation with $k$ replaced by $k+1$. So, by a finite induction we have proven Equation \eqref{eq:step_rec} for any $k\in\{0,\cdots,N_F\}$ for any value of $N_F\geq1$.
 
 Since we have assumed $F$ to be a connected rooted forest (i.e. a rooted tree), $F$ has exactly one segment $s_v$ of maximal depth $N_F$. Furthermore, since $F$ is convergent, we have $l:=|s_v|\geq2$. Thus, after a relabelling of $s_v$:
 \begin{equation*}
  s_v=(1,\cdots,l=v),
 \end{equation*}
 Equation \eqref{eq:step_rec} with $k=N_F-1$ reads
 \begin{align*}
  \zeta_\shuffle^T(F) & = \sum_{\substack{n_v=1, \\ v\in V_y(F)| s_v\in S^{N_F-1}(F)}}^{+\infty} \prod_{v\in s_v|s_v\in S^{N_F-1}(F)}\left(\sum_{\substack{v'\in S^{N_F-1}(F) \\ v'\succeq v}}n_{v'}\right)^{-|s_v|} \\ 
   & \times \int_{0\leq z_1<\cdots<z_l\leq1} \frac{dz_1}{z_1}\cdots\frac{dz_{l-1}}{z_{l-1}}\frac{dz_l}{1-z_l}\left(z_l\right)^{\sum_{\substack{v'\in V_y(F) \\ v'\neq v}}n_{v'}}
 \end{align*}
 Using once again Formula \eqref{eq:lemma_trad} with $Z=1$, $p=l\geq2$ and $A=\sum_{\substack{v'\in V_y(F) \\ v'\neq v}}n_{v'}$ we obtain the statement of the theorem once we write all the sums together.

\end{proof}

\begin{rk} 
 This series representation of AZVs is \emph{not} the stuffle AZVs of \cite{Cl20} which were defined as iterated series. Theorem A.4 of \cite{Cl20} implies that these two series applied to the same forest give in general different values. Instead, this series representation of AZVs defined by iterated integrals should be seen as a new generalisation of MZVs defined as iterated series. It is not the purpose of this paper to explore their algebraic structures and we left that for further research. Instead, we will focus on relating this new generalisation to other generalised MZVs.
\end{rk}

\section{Tree zeta values} \label{section:two_bis}

\subsection{Definition and first properties}

Theorem \ref{thm:integral_sum} motivates a new generalisation of MZVs to trees. Let us first state the definition of these new iterated sums without taking care of their convergence.
\begin{defn} \label{defn:tree_zeta_values}
 For a, $\N^*$-decorated rooted forest $F$, whenever it exists, let 
 \begin{equation} \label{eq:TZV}
  \zeta^t(F) := \sum_{\substack{ n_v\geq1\\v\in V(F)}}\prod_{v\in V(F)}\left(\sum_{\substack{v'\in V(F)\\ v'\succeq v}}n_{v'}\right)^{-\alpha_v}
 \end{equation}
 (with $\alpha_v=d_F(v)\in\N^*$ the decoration of the vertex $v$ and, as before, $v\in V(F)$ in the first sum means that this series has a summation variable for each $v\in V(F)$) be the {\bf tree zeta value} (TZV) associated to $F$. $\zeta^t$ is then extended by linearity to a map defined on a subset of $\calf_{\N^*}$.
\end{defn}
\begin{rk} 
In a private communication, F. Zerbini  suggested that tree zeta values could be of interest, and in particular that they likely were MZVs. I am thankful for this input.
\end{rk}
Here are some examples of TZVs:
\begin{equation*}
 \zeta^t(\tdun{n})=\zeta_\stuffle(n),\qquad \zeta^t(\tdtroisun{$2$}{$1$}{$2$})=\sum_{n_1,n_2,n_3\geq1}\frac{1}{(n_1+n_2+n_3)^2n_2(n_3)^2}.
\end{equation*}
\begin{rk}
 A simple change of summing variables in the interated series of Equation \eqref{eq:sum_stuffle_MZV} allows to rewrite stuffle MZVs as
 \begin{equation*}
  \zeta_\stuffle(n_1\cdots n_k)=\sum_{m_1,\cdots,m_k\geq1}\frac{1}{(m_1+\cdots+m_k)^{n_1}(m_2+\cdots+m_k)^{n_2}\cdots(m_k)^{n_k}}.
 \end{equation*}
 This is the expression of a tree zeta value associated to a ladder tree decorated by $n_1,\cdots,n_k$. Therefore, TZVs are indeed a new generalisation of MZVs to rooted forests.
\end{rk}

To study the convergences of tree zeta values, let us recall some definitions of \cite{Cl20}
\begin{defn} \cite[Definition A.1]{Cl20}
 The {\bf branched binarisation map} is the linear map
 $\fraks^T :\calf_{\N^*}\longrightarrow\calf_{\{x,y\}}$ recursively defined by
 \begin{align*}
  \begin{cases}
   & \fraks^T(\emptyset) = \emptyset \\
   & \fraks^T(T_1\cdots T_n) = \fraks^T(T_1)\cdots\fraks^T(T_n) \\
   & \fraks^T(B_+^n(F)) = (B_+^x)^{\circ(n-1)}\circ B_+^y(\fraks^T(F))
  \end{cases}
 \end{align*}
 and extended by linearity to a map of $\calf_{\N^*}$.
\end{defn}
In \cite{Cl20}, a more algebraic definition of $\fraks^T$ was given using a universal property of the algebra $\calf_{\N^*}$. The above definition is more pedestrian but requires less background. In order to clarify the action of $\fraks^T$, let us write down some examples of the action of $\fraks^T$ on simple trees.
 \begin{equation*}
  \fraks^T(\tdun{1}) = \tdun{y} \qquad  \fraks^T(\tdun{2}) = \tddeux{$x$}{$y$} \qquad \fraks^T\left(\tdtroisun{1}{1}{2}\right) = \tdquatredeux{$y$}{$y$}{$x$}{$y$} \qquad
  \fraks^T\left(\tdtroisun{$2$}{$1$}{$2$}\right) = \tcinqonze{$x$}{$y$}{$x$}{$y$}{$y$}.
 \end{equation*}
 One further definition of importance is the following
 \begin{defn} \label{def:conv_forest_series} \cite[Definition 3.14]{Cl20}
  A $\N^*$-decorated tree is called {\bf convergent} if and only if it is either empty or has  its root decorated by $n\geq2$. A $\N^*$-decorated forest is called {\bf convergent} if and only if it is a disjoint union of convergent trees. We write $\calf_{\N^*}^{\rm conv}$ the set of convergent rooted forests decorated by $\N^*$.
 \end{defn}
 The following trivial characteristics of the map $\fraks^T$ were shown in \cite{Cl20}:
 \begin{lem} \label{lem:bin_map_properties} \cite[Lemma A.3]{Cl20}
  The branched binarisation map $\fraks^T$, when restricted to convergent forests, is a bijection and maps convergent forests to convergent forests:
  \begin{equation*}
   \fraks^T\left(\calf_{\N^*}^{\rm conv}\right) = \calf_{\{x,y\}}^{\rm conv}.
  \end{equation*}
 \end{lem}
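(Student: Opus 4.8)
The plan is to prove the lemma by exhibiting an explicit two-sided inverse of $\fraks^T$, built by ``contracting segments''. First I would reduce to trees. The map $\fraks^T$ is an algebra morphism for the concatenation product; each of $\calf_{\N^*}$ and $\calf_{\{x,y\}}$ has a linear basis consisting of decorated forests, each of which factors uniquely as a commutative product of decorated trees; and $\fraks^T$ sends a tree to a tree, since the operators $B_+^x,B_+^y$ produce connected forests. Hence it suffices to show that the restriction of $\fraks^T$ to $\calt_{\N^*}$ is injective, to identify its image among decorated trees, and to check that this restriction is a bijection between convergent trees; the general statement then follows by unique factorisation into trees.

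Next I would introduce the candidate inverse $\frakc$. Given an $\{x,y\}$-decorated rooted tree $G$ all of whose leaves and branching vertices are decorated by $y$, follow the unique ascending path starting at the root for as long as the current vertex carries the decoration $x$. Since every branching vertex of $G$ is decorated $y$, each such $x$-vertex has exactly one child, and since every leaf is decorated $y$, the path cannot stop at an $x$-vertex; therefore it reaches, after some number $m\geq 0$ of vertices decorated $x$, a first $y$-vertex $w$. Writing $G_1\cdots G_r$ for the forest of subtrees of $G$ hanging from the children of $w$ (each $G_i$ again has all its leaves and branching vertices decorated $y$), I set $\frakc(G):=B_+^{m+1}\!\left(\frakc(G_1)\cdots\frakc(G_r)\right)$, with $\frakc(\emptyset)=\emptyset$, and extend $\frakc$ multiplicatively to the set of all $\{x,y\}$-decorated forests whose leaves and branching vertices are decorated $y$.

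Then I would verify, by induction on the number of vertices, that $\frakc\circ\fraks^T=\mathrm{id}$ on $\calf_{\N^*}$ and that $\fraks^T\circ\frakc=\mathrm{id}$ on the set of forests just described. For a tree $B_+^n(F)$ one has $\fraks^T(B_+^n(F))=(B_+^x)^{\circ(n-1)}B_+^y(\fraks^T(F))$, whose ascending path from the root is exactly the chain of $n-1$ vertices decorated $x$ followed by the $y$-vertex created by $B_+^y$; so $\frakc$ reads off the decoration $n$ and returns $B_+^n$ of $\frakc(\fraks^T(F))$, to which the induction hypothesis applies, and symmetrically for the other composition. This shows at once that $\fraks^T$ is injective and that its image is precisely the set of $\{x,y\}$-decorated forests all of whose leaves and branching vertices are decorated $y$ (so $\fraks^T$ is not surjective onto $\calf_{\{x,y\}}$ --- e.g.\ a single vertex decorated $x$ is not in the image --- and ``bijection'' is to be understood as bijectivity onto this image). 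For the convergent case: a convergent $\{x,y\}$-forest is precisely one whose leaves and branching vertices are decorated $y$ and whose tree-roots are decorated $x$, hence it lies in the image and has $m\geq 1$ for each of its trees, so $\frakc$ sends it to an $\N^*$-forest whose tree-roots carry decorations $m+1\geq 2$, i.e.\ a convergent one; conversely if $F\in\calf_{\N^*}^{\rm conv}$ then each tree of $F$ is $B_+^n(F')$ with $n\geq 2$, and the displayed formula shows $\fraks^T(F)$ has $x$-decorated roots, so $\fraks^T(F)\in\calf_{\{x,y\}}^{\rm conv}$. Thus $\fraks^T$ and $\frakc$ restrict to mutually inverse bijections between $\calf_{\N^*}^{\rm conv}$ and $\calf_{\{x,y\}}^{\rm conv}$.

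The only genuinely delicate point, and the place where both hypotheses on $G$ are used, is the claim that the ascending path from the root of a forest in the image terminates at a $y$-vertex without branching beforehand; this is what makes the ``first segment'', and hence the map $\frakc$, well defined. Everything else is a routine unwinding of the recursive definitions of $\fraks^T$ and $\frakc$, so I do not expect any real obstacle.
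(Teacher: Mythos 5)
The paper itself does not prove this lemma — it cites \cite[Lemma A.3]{Cl20} — so there is no in-paper proof to compare against. Your argument is correct: the segment-contraction map $\frakc$ is a well-defined two-sided inverse on the image of $\fraks^T$, which you correctly identify as the set of $\{x,y\}$-decorated forests all of whose leaves and branching vertices carry the decoration $y$. In particular, you are right to flag that $\fraks^T:\calf_{\N^*}\to\calf_{\{x,y\}}$ is \emph{not} surjective, so the word ``bijection'' in the statement must be read as injectivity (bijectivity onto its image), with the displayed equality then upgrading this to a genuine bijection between $\calf_{\N^*}^{\rm conv}$ and $\calf_{\{x,y\}}^{\rm conv}$ — which is exactly what the paper uses later, e.g.\ in the proof of Proposition~\ref{prop:crucial}. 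One small step worth making explicit in the recursion: the subtrees $G_1,\dots,G_r$ hanging below the first $y$-vertex $w$ again satisfy the hypothesis (their leaves are leaves of $G$ and their branching vertices are branching vertices of $G$, since each $G_i$ is a full subtree), which is what makes the definition of $\frakc$ and the induction well-founded; you use this implicitly. Otherwise the proof is complete and is the natural argument one would expect to find in \cite{Cl20}.
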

 The following simple yet important result justifies the definition of tree zeta values.
 \begin{prop} \label{prop:crucial}
  For any convergent $\N^*$-decorated rooted forest $F$, the tree zeta value $\zeta^t(F)$ associated to $F$ is convergent and is equal to the arborified zeta values associated to the convergent $\{x,y\}$-decorated rooted forest $\fraks^T(F)$:
  \begin{equation*}
   \zeta^t(F) = \zeta^T_\shuffle(\fraks^T(F)).
  \end{equation*}
 \end{prop}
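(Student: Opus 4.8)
The plan is to combine the two representation results already at our disposal: Theorem~\ref{thm:integral_sum}, which expresses $\zeta(\fraks^T(F))$ as an explicit iterated series indexed by the $y$-decorated vertices of $\fraks^T(F)$, and the definition of $\zeta^t(F)$ as an iterated series indexed by the vertices of $F$. So the proof is essentially a bookkeeping argument: I must set up a bijection between the data appearing in the two series (summation indices, exponents, the partial-order condition $v'\succeq v$) and check that the summands match term by term. Since both $\zeta^t$ and $F\mapsto\zeta(\fraks^T(F))$ are multiplicative for the concatenation product (the former by Definition~\ref{defn:tree_zeta_values}, the latter because $\fraks^T$ is an algebra morphism and $\zeta$ is multiplicative), I can reduce to the case where $F$ is a rooted tree. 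Convergence of $\zeta^t(F)$ will follow once the identification with $\zeta(\fraks^T(F))$ is established, since the latter converges by Theorem~\ref{thm:flattening} (it is a finite rational combination of MZVs); alternatively it follows directly from Lemma~\ref{lem:bin_map_properties}, which guarantees $\fraks^T(F)$ is a convergent $\{x,y\}$-forest, so Theorem~\ref{thm:integral_sum} applies.

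The key step is to understand the segment structure of $\fraks^T(F)$ in terms of $F$. By the recursive definition of $\fraks^T$, a vertex of $F$ decorated by $n\geq 1$ is replaced by a chain of $n-1$ vertices decorated by $x$ sitting on top of one vertex decorated by $y$; the grafting structure of $F$ is preserved in the sense that the $B_+$-operations are nested in the same way. Consequently there is a canonical bijection $v\mapsto s_{\phi(v)}$ between $V(F)$ and $S(\fraks^T(F))$, where $\phi(v)$ is the $y$-decorated vertex at the bottom of the chain replacing $v$, and under this bijection the length $|s_{\phi(v)}|$ equals the decoration $\alpha_v=d_F(v)$. Moreover the poset structure is transported faithfully: $v'\succeq v$ in $F$ if and only if $s_{\phi(v')}\succeq s_{\phi(v)}$ in $S(\fraks^T(F))$, since the chains are vertical and the branching structure between chains reproduces the branching structure of $F$. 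These three facts — bijection of index sets, equality of exponents with decorations, and order-preservation — should be verified by a straightforward induction on the structure of $F$ using the recursive clauses for $\fraks^T$ and the definition of segments.

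With these identifications in hand, I rename the summation variable $n_{\phi(v)}$ in Theorem~\ref{thm:integral_sum} as $n_v$, and the series
\begin{equation*}
 \zeta(\fraks^T(F)) = \sum_{\substack{n_{w}\geq 1\\ w\in V_y(\fraks^T(F))}} \prod_{w\in V_y(\fraks^T(F))} \Bigl(\sum_{\substack{w'\in V_y(\fraks^T(F))\\ w'\succeq w}} n_{w'}\Bigr)^{-|s_{w}|}
\end{equation*}
becomes verbatim
\begin{equation*}
 \sum_{\substack{n_v\geq 1\\ v\in V(F)}} \prod_{v\in V(F)} \Bigl(\sum_{\substack{v'\in V(F)\\ v'\succeq v}} n_{v'}\Bigr)^{-\alpha_v} = \zeta^t(F),
\end{equation*}
which is the desired identity. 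I expect the main obstacle to be stating and proving the structural claim about segments cleanly: one has to be careful that a ``segment'' of $\fraks^T(F)$ as defined (a maximal $x\cdots x\,y$ chain whose parent is $y$-decorated or is a root situation) really does correspond one-to-one with the original vertices, in particular checking the boundary behaviour at the root of $F$ (whose image chain starts at a root of $\fraks^T(F)$, with no $y$-ancestor) and making sure the convergence hypothesis on $F$ (root decoration $\geq 2$) matches the convergence hypothesis on $\fraks^T(F)$ (roots decorated $x$, i.e. chains of length $\geq 2$) as already recorded in Lemma~\ref{lem:bin_map_properties}. Once the dictionary is pinned down, the rest is immediate.
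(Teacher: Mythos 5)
Your proposal is correct and takes essentially the same approach as the paper: both identify the series in Theorem~\ref{thm:integral_sum} with $\zeta^t\bigl((\fraks^T)^{-1}(f)\bigr)$ via the correspondence between vertices of $F$ and segments of $\fraks^T(F)$, and both deduce convergence from Lemma~\ref{lem:bin_map_properties}. The paper calls the matching of the two series a ``simple observation'' and leaves the bijection implicit; you simply make the dictionary (index set, exponents via $|s_{\phi(v)}|=\alpha_v$, order preservation) explicit, which is a welcome expansion but not a different argument.
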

 \begin{proof}
  The result follows from the simple observation that, for any convergent $\{x,y\}$-decorated rooted forest $f$ the series representation of $\zeta_\shuffle^T(f)$ given by Theorem \ref{thm:integral_sum} is precisely the tree zeta value $\zeta^t((\fraks^T)^{-1}(f))$. The convergence of $\zeta^t(F)$ for any convergent $\N^*$-decorated rooted forest $F$ then follows from the facts that $\fraks^T$ is a one to one map between the two sets of convergent rooted forests and that the arborified zeta value $\zeta_\shuffle^T(f)$ converges for any convergent $\{x,y\}$-decorated rooted forest $f$.
 \end{proof}
 In particular, when $F$ is a ladder tree, Kontsevitch's relation (Equation \eqref{eq:Kontsevitch}) gives us
 \begin{equation} \label{eq:TZVs_stuffle_words}
  \zeta^t(\iota(w))=\zeta_\stuffle(w)
 \end{equation}
 for any convergent word $w$; where $\iota$ is the canonical map sending words to ladder trees.
 
 From this Proposition, one easily derives important properties of tree zeta values from the properties of branched zeta values.
 \begin{thm} \label{thm:tree_zeta_MZVs}
  \begin{itemize}
   \item The map $\zeta^t:F\mapsto\zeta^t(F)$ is an algebra morphism from convergent $\N^*$-decorated rooted forests to $\R$ for the concatenation product of forests.
   \item For any convergent $\N^*$-decorated rooted forests $F$, the tree zeta value $\zeta^t(F)$ is a $\Q$-linear combination of MZVs, given by
   \begin{equation*}
    \zeta^t(F) = (\zeta_\shuffle\circ fl_0\circ\fraks^T)(F).
   \end{equation*}
  \end{itemize}
 \end{thm}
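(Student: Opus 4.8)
The plan is to derive both claims of Theorem~\ref{thm:tree_zeta_MZVs} by transport of structure along the branched binarisation map $\fraks^T$, using Proposition~\ref{prop:crucial} as the bridge between tree zeta values and arborified zeta values, and then invoking the already-established Theorem~\ref{thm:flattening}. The key point throughout is that everything in sight has already been set up: Proposition~\ref{prop:crucial} gives $\zeta^t(F) = \zeta(\fraks^T(F))$ for convergent $\N^*$-decorated forests $F$, Lemma~\ref{lem:bin_map_properties} says $\fraks^T$ restricts to a bijection $\calf_{\N^*}^{\rm conv}\to\calf_{\{x,y\}}^{\rm conv}$, and Theorem~\ref{thm:flattening} gives both that $\zeta$ is computed by $\zeta_\shuffle\circ fl$ on convergent $\{x,y\}$-forests and (implicitly, via Theorem~\ref{thm:integral_sum}, whose proof opens by noting $F\mapsto\zeta(F)$ is an algebra morphism for concatenation) the multiplicativity we need.

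For the first bullet, I would argue as follows. By Proposition~\ref{prop:crucial}, $\zeta^t = \zeta\circ\fraks^T$ on $\calf_{\N^*}^{\rm conv}$. The map $\fraks^T$ is multiplicative for concatenation of forests by its very definition (the middle clause $\fraks^T(T_1\cdots T_n)=\fraks^T(T_1)\cdots\fraks^T(T_n)$), and it sends concatenations of convergent trees to concatenations of convergent trees by Definition~\ref{def:conv_forest_series} and Lemma~\ref{lem:bin_map_properties}. The arborified zeta value $\zeta$ is itself an algebra morphism for the concatenation product (as recalled at the start of the proof of Theorem~\ref{thm:integral_sum}, and also visible directly: $\Delta_{F_1 F_2}$ is, up to measure zero, the product region since vertices in distinct components are incomparable, so the integral factorises). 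Composing two algebra morphisms yields an algebra morphism, giving the first claim.

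For the second bullet, I would simply chain the identities: for $F\in\calf_{\N^*}^{\rm conv}$, Proposition~\ref{prop:crucial} gives $\zeta^t(F)=\zeta(\fraks^T(F))$; since $\fraks^T(F)\in\calf_{\{x,y\}}^{\rm conv}$ by Lemma~\ref{lem:bin_map_properties}, Theorem~\ref{thm:flattening} applies and gives $\zeta(\fraks^T(F)) = \zeta_\shuffle(fl(\fraks^T(F))) = (\zeta_\shuffle\circ fl\circ\fraks^T)(F)$, which is a finite $\Q$-linear combination of MZVs because $fl$ maps convergent forests to convergent words and $\zeta_\shuffle$ evaluated on convergent words returns MZVs, with the rationality of coefficients already asserted in Theorem~\ref{thm:flattening}. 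Extending by linearity to all of $\calf_{\N^*}^{\rm conv}$ is immediate since all three maps are linear.

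Honestly, there is essentially no obstacle here: the theorem is a formal corollary, and the only thing to be careful about is bookkeeping — making sure the convergence hypotheses line up (Definition~\ref{def:conv_forest_series} for the source, Definition~\ref{def:conv_forest_integral}/\ref{def:conv_forest_series} on the $\{x,y\}$ side) so that each cited result is applied within its stated domain, and noting explicitly that $fl$ lands in convergent words so that $\zeta_\shuffle$ is defined on its image. If anything deserves a sentence of justification it is the multiplicativity of $\zeta$ for concatenation, but that too is already invoked inside the proof of Theorem~\ref{thm:integral_sum}, so one may cite it rather than reprove it.
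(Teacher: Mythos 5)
Your proposal is correct and follows essentially the same route as the paper's own proof: both points are derived by composing the identity $\zeta^t = \zeta\circ\fraks^T$ from Proposition~\ref{prop:crucial} with, respectively, the multiplicativity of $\zeta$ and $\fraks^T$ for concatenation, and with Theorem~\ref{thm:flattening}. Your version is slightly more explicit about verifying that the convergence hypotheses and domains of the cited results line up, but the underlying argument is the same.
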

 \begin{proof}
  Both points follow directly from Proposition \ref{prop:crucial}
  \begin{itemize}
   \item The first point follows from the fact that $\zeta^t=\zeta_\shuffle^T\circ\fraks^T$ together with the fact that both $\zeta_\shuffle^T$ and $\fraks^T$ are algebra morphisms for the concatenation product of rooted forests.
   \item The second point follows from the same relation $\zeta^t=\zeta_\shuffle^T\circ\fraks^T$ together with Theorem \ref{thm:flattening} which states that $\zeta_\shuffle^T=\zeta_\shuffle\circ fl$.
  \end{itemize}
 \end{proof}
 \begin{rk}
  Finite sums similar to TZVs were studied in \cite{On16} in the context of finite MZVs. In particular, \cite[Theorem 1.4]{On16} is the equivalent to Theorem \ref{thm:tree_zeta_MZVs} in the context of finite sums.
 \end{rk}
  Notice that $\zeta^t$ is \emph{not} an algebra morphism for the shuffle product of trees, since $\fraks^T$ is not. It can also be checked on computations that it neither is an algebra morphism for any of the stuffle products of trees (see \cite[Definition 5.1]{Cl20}).  The rest of this Section is dedicated to the study of the algebraic properties of TZVs.

 \subsection{The yew product}
 
 The $\yew$ product is an alternative product which will have interesting applications to TZVs as well as their applications to CZVs.
 \begin{defn} \label{defn:yew}
  The $\yew$ product (read ``yew'') is a product $\yew:\calf_{\N^*}\otimes\calf_{N^*}\longrightarrow\calf_{\N^*}$ defined by
  \begin{equation*}
   \yew=(\s^T)^{-1}\circ\shuffle^T\circ(\s^T\otimes\s^T).
  \end{equation*}
 \end{defn}
 \begin{rk} \label{rk:yew_words}
  In some applications, we will use the $\yew$ product on words: for two words $w$ and $w'$ we will write $w\yew w'$ instead of $\iota^{-1}(\iota(w)\yew\iota(w'))$ (with $\iota$ the canonical injection of words into rooted forests). We allow ourselves to make this small abuse of notations as it should not give rise to any confusion and will allow to greatly lighten the notations.
 \end{rk}

 We start by a simple but important property of the $\yew$ product.
 \begin{prop}
  The $\yew$ product is commutative but not associative.
 \end{prop}
 \begin{proof}
  The commutativity directly follows from the commutativity of $\shuffle^T$. The non associativity as well and can be check on examples: take $F_1 = \tdun{2} \tdun{2}$, $F_2 = \tdun{2}$ and $F_3 = \tdun{2}$. After some computations we obtain:
	\begin{align*}
	  (F_1 \yew F_2) \yew F_3 &- F_1 \yew (F_2 \yew F_3) = 
	 2 \times \tddeux{2}{2} \tddeux{2}{2} + 8 \times \tddeux{2}{2} \tddeux{3}{1} + 8 \times \tddeux{3}{1} \tddeux{3}{1} \\
	  &- \left[ 3 \times \tdun{2} \tdtrois{2}{2}{2} + 6 \times \tdun{2} \tdtrois{3}{1}{2} + 6 \times \tdun{2} \tdtrois{2}{3}{1}
+ 12 \times \tdun{2} \tdtrois{3}{2}{1} + 18 \times \tdun{2} \tdtrois{4}{1}{1} \right].
	\end{align*}
 \end{proof}
 Let us know give an explicit formula for the $\yew$ product. We start with two technical Lemmas about the shuffle product of rooted forests decorated by $\{x,y\}$.
 \begin{lem} \label{lem:lem de lem pour yew}
  For any rooted forests $(f,g)\in\calf_{\{x,y\}}^2$ and any $n\in\N$ we have:
  \[ \big( (B_+^x)^{\circ n}\circ  B_+^y(f) \big) \shuffle^T B_+^y(g) 
	=  (B_+^x)^{\circ n}\circ  B_+^y \big[ f \shuffle^T B_+^y(g) \big] + \sum_{j=0}^n (B_+^x)^{\circ j}\circ  B_+^y \Big[ \big( (B_+^x)^{\circ(n-j)}  B_+^y(f) \big) \shuffle^T g \Big].  \]
 \end{lem}
\begin{proof}
 We prove this result by induction on $n$. For $n=0$, the lemma holds by definition of the shuffle product $\shuffle^T$. Let us assume it holds for some $n\in\N$. We then have
 \begin{align*}
	& \big( (B_+^x)^{\circ(n+1)}\circ B_+^xy(f) \big) \shuffle^T B_+^y(g) = B_+^x\circ \Big[ \big( (B_+^x)^{\circ n}\circ B_+^y (f) \big) \shuffle^T B_+^y(g) \Big] + B_+^y \Big[ \big( (B_+^x)^{\circ(n+1)}\circ  B_+^y (f) \big) \shuffle^T g \Big]  \\
		= & B_+^x \bigg( (B_+^x)^{\circ n}\circ B_+^y \big[ f \shuffle^T B_+^y(g) \big] + \sum_{j=0}^a (B_+^x)^{\circ j}\circ B_+^y \Big[ \big( (B_+^x)^{\circ(n-j)}\circ B_+^y(f) \big) \shuffle^T g \Big] \bigg)
        +  B_+^y \Big[ \big( (B_+^x)^{\circ(n+1)}\circ B_+^y (f) \big) \shuffle^T g \Big] \\
        & \hspace{12cm}\qquad \llcorner \text{ by the induction hypothesis} \\
		&= (B_+^x)^{\circ(n+1)}\circ B_+^y \big[ f \shuffle^T B_+^y(g) \big] + \sum_{j=0}^{n+1} (B_+^x)^{\circ j}\circ B_+^y \Big[ \big( (B_+^x)^{\circ(n+1-j)} \circ B_+^y(f) \big) \shuffle^T g \Big]
	\end{align*}
	which conclude the induction and the proof.
\end{proof}
We generalise this result:
\begin{lem} \label{lem:yew_inter}
 For any rooted forests $(f,g)\in\calf_{\{x,y\}}^2$ and any $(n,m)\in\N^2$ we have:
	\begin{align*}
	\big( (B_+^x)^{\circ n}\circ  B_+^y(f) \big) \shuffle^T \big( (B_+^x)^{\circ m}\circ  B_+^y(g) \big) &
	=  \sum_{i=0}^{m} \binom{n+i}{i} (B_+^x)^{\circ(n+i)}\circ B_+^y \left[ f \shuffle^T \big( (B_+^x)^{\circ(m-i)}\circ B_+^y(g) \big) \right] \\
	 +& \sum_{j=0}^{n} \binom{m+j}{j} (B_+^x)^{m+j}\circ B_+^y \left[ \big( (B_+^x)^{\circ(n-j)}\circ B_+^y(f) \big) \shuffle^T g \right].
	\end{align*}
\end{lem}
\begin{proof}
 We prove this result by induction on $n+m$. If $n+m=0$, then it is holds by definition of the shuffle product of rooted trees. Assume that the result hold for some $N=n+m$. Then for any $(n,m)\in\N^2$ such that $n+m=N+1$, if $n=0$ or $m=0$ the result reduces to the previous Lemma \ref{lem:lem de lem pour yew} (eventually using the commutativity of $\shuffle^T$). For $n\neq 0$ and $m\neq0$ we have
 \begin{align*}
  \big( (B_+^x)^{\circ n}\circ B_+^y(f) \big) \shuffle^T \big( (B_+^x)^{\circ m}\circ B_+^y(g) \big) & = B_+^x\circ \Big( \big( (B_+^x)^{\circ(n-1)}\circ B_+^y(f) \big) \shuffle^T \big( (B_+^x)^{\circ m}\circ B_+^y(g) \big) \Big) \\ 
	 + & B_+^x\circ \Big( \big( (B_+^x)^{\circ n}\circ B_+^y(f) \big) \shuffle^T \big( (B_+^x)^{\circ(m-1)}\circ B_+^y (g) \big) \Big).
 \end{align*}
 Using the induction hypothesis we obtain:
 \begin{align*}
  & B_+^x\circ \Big( \big( (B_+^x)^{\circ(n-1)}\circ B_+^y(f) \big) \shuffle^T \big( (B_+^x)^{\circ m}\circ B_+^y(g) \big) \Big) \\
  = & B_+^x \Bigg( 
  \sum_{i=0}^{m} \binom{n-1+i}{i} (B_+^x)^{\circ(n-1+i)}\circ B_+^y \left[ f \shuffle^T \big( (B_+^x)^{\circ(m-i)}\circ B_+^y(g) \big) \right] \\
  & \hspace{5cm}+ \sum_{j=0}^{n-1} \binom{m+j}{j} (B_+^x)^{m+i}\circ B_+^y \left[ \big( (B_+^x)^{\circ(n-1-j)}\circ B_+^y(f) \big) \shuffle^T g \right] \Bigg) \\
  = & \sum_{i=0}^{m} \binom{n-1+i}{i} (B_+^x)^{\circ(n+i)}\circ B_+^y \left[ f \shuffle^T \big( (B_+^x)^{\circ(m-i)}\circ B_+^y(g) \big) \right] \\
	&\hspace{5cm}
	+ \sum_{j=1}^{n} \binom{m-1+j}{j-1} (B_+^x)^{\circ(m+j)}\circ B_+^y \left[ \big( (B_+^x)^{n-j}\circ B_+^y(f) \big) \shuffle^T g \right].
 \end{align*}
 Similarly we obtain:
 \begin{align*}
  & B_+^x\circ \Big( \big( (B_+^x)^{\circ n}\circ B_+^y(f) \big) \shuffle^T \big( (B_+^x)^{\circ(m-1)}\circ B_+^y (g) \big) \\
  = & \sum_{i=1}^{m} \binom{n-1+i}{i-1} (B_+^x)^{\circ(n+i)}\circ B_+^y \left[ f \shuffle^T \big( (B_+^x)^{\circ(m-i)} B_+^y(g) \big) \right] \\
	&\hspace{5cm} + \sum_{j=0}^{n} \binom{m-1+j}{j}  (B_+^x)^{\circ(m+j)}\circ B_+^y  \left[ \big( (B_+^x)^{\circ(n-j)}\circ B_+^y(f) \big) \shuffle^T g \right].
 \end{align*}
 Summing these two expressions and using Pascal's triangle (and the fact that $\binom{p-1}{0}=\binom{p}{0}$) gives the result, which concludes the induction and the proof.
\end{proof}
\begin{rk}
 This result can be understood in a purely combinatorial way. Each tree in this shuffle has to have at leat min$\{n,m\}$ $x$'s decorating its root and its descendants. These decorations can come either from the term $(B_+^x)^{\circ n}\circ B_+^y(f)$ or from the the term $(B_+^x)^{\circ m}\circ  B_+^y(g)$. The binomial coefficients come from all the possibilities one has to choose the $x$'s.
\end{rk}
We can now prove the main result of this Subsection, which gives an inductive formula to directly compute the $\yew$ product of two rooted forests, without references to the shuffle $\shuffle^T$ nor the binarisation map $\s^T$.
\begin{thm} \label{thm:yew_formula}
 The $\yew$ product admits the following inductive description:
 \begin{itemize}
	\item For any forest $F\in\calf_{\N^*}$,  $F \yew \emptyset = \emptyset \yew F = F$. 
	\item For any rooted trees $T_1 = B_+^n (f_1)$ and $T_2 = B_+^m(f_2)$:
	$$T_1 \yew T_2 = \sum_{i=0}^{m-1} \binom{n-1+i}{i} B_+^{n+i}\left[ f_1 \yew B_+^{m-i}(f_2) \right]  + \sum_{j=0}^{n-1} \binom{m-1+j}{j} B_+^{m+j} \left[ B_+^{n-j}(f_1) \yew f_2 \right].$$
	\item For any rooted forests $F_1 = T_1\cdots T_n$ and $F_2 = t_1\cdots t_k$:
	\[ F_1 \yew F_2 = \frac{1}{kn} \sum_{i=1}^k \sum_{j=1}^n \left(  (T_i \yew t_j) T_1 \cdots \widehat{T_i} \cdots T_k t_1 \cdots \widehat{t_j} \cdots t_n \right). \]
	\end{itemize}
\end{thm}
\begin{proof}
 \begin{itemize}
  \item The first point follows directly from the facts that $\s^T(\emptyset)=\emptyset$ and that $\emptyset$ is the unique neutral element for the shuffle product $\shuffle^T$.
  \item For the second point, let $T_1 = B_+^n (f_1)$ and $T_2 = B_+^m(f_2)$ be as in the Theorem. Then
  \begin{align*}
	& T_1 \yew T_2 = (\s^T)^{-1} ( \s^T(T_1) \shuffle^T \s^T(T_2))\qquad\text{by definition of }\yew  \\
    = & (\s^T)^{-1} \left( (B_+^x)^{\circ(n-1)}\circ B_+^y (\s^T(f_1)) \shuffle^T (B_+^x)^{\circ(n-1)}\circ B_+^y (\s^T(f_2)) \right) \qquad\text{by definition of }\s^T\\
    = & (\s^T)^{-1} \Bigg( \sum_{i=0}^{m-1} \binom{n-1+i}{i} (B_+^x)^{\circ(n-1+i)}\circ B_+^y \left[ \s^T(f_1) \shuffle^T (B_+^x)^{\circ(m-1-i)}\circ B_+^y (\s^T(f_2)) \right]  \\ 
		& + \sum_{j=0}^{n-1} \binom{m-1+j}{j} (B_+^x)^{\circ(m-1+j)}\circ B_+^y \left[ (B_+^x)^{\circ(n-1-j)}\circ B_+^y (\s^T(f_1)) \shuffle^T \s^T(f_2) \right] \Bigg)  ~ \text{by Lemma \ref{lem:yew_inter}} \\
    = & \sum_{i=0}^{m-1} \binom{n-1+i}{i} B_+^{n+i} \left( (\s^T)^{-1}\left[ \s^T(f_1) \shuffle^T \s^T(B_+^{m-i}f_2) \right] \right) \\ 
		& + \sum_{j=0}^{n-1} \binom{m-1+j}{j} B_+^{m+j} \left( (\s^T)^{-1} \left[\s^T( B_+^{n-j} f_1) \shuffle^T \s^T(f_2) \right] \right) \qquad\text{by definition of }(\s^T)^{-1} \\
    = & \sum_{i=0}^{m-1} \binom{n-1+i}{i} B_+^{n+i}\left[ f_1 \yew B_+^{m-i}(f_2) \right]  + \sum_{j=0}^{n-1} \binom{m-1+j}{j} B_+^{m+j} \left[ B_{n-j}(f_1) \yew f_2 \right]
	\end{align*}
	by definition of $\yew$.
  \item Finally, let $F_1 = T_1\cdots T_n$ and $F_2 = t_1\cdots t_k$ be as in the Theorem. Then 
  \begin{align*}
	& F_1 \yew F_2 = (\s^T)^{-1}( s^T(F_1) \shuffle^T \s^T(F_2)) \qquad\text{by definition of }\yew  \\
    = & (\s^T)^{-1} \Bigg( \frac{1}{kn} \sum_{i=1}^k \sum_{j=1}^n \Big(  (\s^T(T_i) \shuffle^T \s^T(t_j)) \s^T(T_1) \cdots \widehat{\s^T(T_i)} \cdots \s^T(T_k) \s^T(t_1) \cdots \widehat{\s^T(t_j)} \cdots \s^T(t_n) \Big) \Bigg) \\
    & \hspace{6cm}\llcorner\text{since }\s^T\text{ is an algebra morphism and by definition of }\shuffle^T \\
		&= \frac{1}{kn} \sum_{i=1}^k \sum_{j=1}^n \left(  (T_i \yew t_j) T_1 \cdots \widehat{T_i} \cdots T_k t_1 \cdots \widehat{t_j} \cdots t_n \right).
	\end{align*}
 \end{itemize}
\end{proof}
This result allows fast computations of $\yew$ products of rooted forests, espacially in cases with few branchings. Let us illustrate this with some examples.
\begin{ex} \label{ex:yew_products}
We start with an explicit computation:
 \begin{align*}
		\tdun{2} \yew \tddeux{3}{1} 
		&= B_+^2( \tddeux{3}{1}) + 2. B_+^3( \tddeux{2}{1}) + 3. B_+^4( \tddeux{1}{1}) + B_+^3( \tdun{2} \yew \tdun{1}) + 3. B_+^4( \tdun{1} \yew \tdun{1}) \\
		&= \tdtrois{2}{3}{1} + 2. \tdtrois{3}{2}{1} + 3. \tdtrois{4}{1}{1} + B_+^3 \left[ B_+^2(\tdun{1}) + B_+^1( \tdun{2}) + B_+^2( \tdun{1}) \right] + 3. B_+^4 \left[ B_+^1( \tdun{1}) + B_+^1( \tdun{1}) \right] \\
		&= \tdtrois{2}{3}{1} + 4. \tdtrois{3}{2}{1} + 9. \tdtrois{4}{1}{1} + \tdtrois{3}{1}{2}
	\end{align*}
 The same computation can be carried out with arbitrary coefficients. We obtain
 \begin{align*}
  \tdun{m} \yew \tddeux{n}{k} = & \sum_{i=0}^{m-1}\binom{n-1+i}{i}\left[\sum_{i'=0}^{k-1}\binom{m-i-1+i'}{i'}\tdtrois{n+i}{m-i+i'}{k-i'} + \sum_{j'=0}^{m-i-1}\binom{k-1+j'}{j'}\tdtrois{n+i}{k+j'}{m-i-j'}\right] 
+ \sum_{j=0}^{n-1}\binom{m-1+j}{j}\tdtrois{m+j}{n-j}{k}.
 \end{align*}
 We can carry further the computation. For example, one finds
 \begin{align*}
  \tddeux{n}{k}\yew\tddeux{m}{l} & =  \sum_{a=0}^{m-1}\binom{n-1+a}{a}\left\{ \sum_{i=0}^{k-1}\binom{m-a-1+i}{i}\left[\sum_{i'=0}^{l-1} \binom{k-i-1+i'}{i'}
  \tdquatre{n+a}{m-a+i}{k-i+i'}{l-i'} + \sum_{j'=0}^{k-i-1}\binom{l-1+j'}{j'}\tdquatre{n+a}{m-a+i}{l+j'}{k-i-j'}~
  \right]\right. \\
  + & \left.\sum_{j=0}^{m-a-1}\binom{k-1+j}{j}\tdquatre{n+a}{k+j}{m-a-j}{l}\right\} + \sum_{b=0}^{n-1}\binom{m-1+b}{b}\left\{\sum_{i=0}^{l-1}\binom{n-b-1+i}{i}\left[\sum_{i'=0}^{k-1}\binom{l-i-1+i'}{i'}\tdquatre{m+b}{n-b+i}{l-i+i'}{k-i'}    \right.\right.  \\
  & +  \left.\left. \sum_{j'=0}^{l-i-1}\binom{k-1+j'}{j'}\tdquatre{m+b}{n-b+i}{k+j'}{l-i-j'}~\right] + \sum_{j=0}^{n-b-1}\binom{l-1+j}{j}\tdquatre{m+b}{l+j}{n-b-j}{k}\right\}.
 \end{align*}

\end{ex}
 
 \subsection{The yew product and TZVs}
 
 In order to relate the $\yew$ product we just introduced with TZVs, we first need a simple property.
 \begin{prop}
  The $\yew$ product stabilises $\calf_{\N^*}^{\rm conv}$ i.e., for any $F_1,F_2\in\calf_{\N^*}^{\rm conv}$, $F_1\yew F_2\in \calf_{\N^*}^{\rm conv}$.
 \end{prop}
 \begin{proof}
  The result follows from the observation that $\s^T(\calf_{\N^*}^{\rm conv})=\calf_{\{x,y\}}^{\rm conv}$,  (\cite[Lemma A.3]{Cl20}), the fact that $\shuffle^T$ stabilises $\calf_{\{x,y\}}^{\rm conv}$ (\cite[Lemma 5.9]{Cl20}) and the obvious fact $\s^T$ is a one-to-one map.
 \end{proof}
 This ensures that the next simple but important result makes sense.
 \begin{thm} \label{thm:yew_TZVs}
  The map $\zeta^t:\calf_{\N^*}^{\rm conv}\longrightarrow\R$ is an algebra morphism for the $\yew$ product:
  \begin{equation*}
   \forall F_1,F_2 \in \calf_{\N^*}^{\rm conv}, \quad \zeta^t \left( F_1 \yew F_2 \right) = \zeta^t(F_1) \zeta^t(F_2).
  \end{equation*}
 \end{thm}
 \begin{proof}
  For any $F_1,F_2 \in \calf_{\N^*}^{\rm conv}$ we have
  \begin{align*}
		\zeta^t(F_1 \yew F_2) &= \zeta_\shuffle^T( \s( F_1 \yew F_2) ) &\text{by Theorem \ref{thm:integral_sum}}\\
			&= \zeta_\shuffle^T( \s(F_1) \shuffle^T \s(F_2)) 	&\text{by definition of }\yew \\
			&= \zeta_\shuffle^T(\s(F_1)) \zeta_\shuffle^T(\s(F_2))	&\text{since }\zeta_\shuffle^T\text{ is an algebra morphism for }\shuffle^T\\
			&= \zeta^t(F_1) \zeta^t(F_2)	&\text{by Theorem \ref{thm:integral_sum}}
	\end{align*}
 \end{proof}
 As for the shuffle product $\shuffle^T$, the non-associativity of the $\yew$ product implies the existence of relations amongst TVZs that have non direct equivalent for MZVs. These relations follow directly from the previous theorem and the associativity of the product on $\R$.
 \begin{coro}
  The image of the associator of $\yew$ restricted to $\calf_{\N^*}^{\rm conv}$ lies in $\ker(\zeta^t)$ i.e., for any $F_1,F_2$ and $F_3$ in $\calf_{\N^*}^{\rm conv}$ we have
  \begin{equation*}
   (F_1 \yew F_2) \yew F_3 - F_1 \yew (F_2 \yew F_3) \in \ker(\zeta^t).
  \end{equation*}
 \end{coro}
 We can now show that the $\yew$ product allows to relate $\zeta^t$ to the stuffle MZV map $\zeta_\stuffle$. We start by an important proposition which rely upon the assoicativity of the shuffle product of words $\shuffle$. In order to state this result, recall that a {\bf ladder tree} decorated by a set $\Omega$ is a rooted tree of the form $l=B_+^{\omega_1}\circ\cdots\circ B_+^{\omega_n}(\emptyset)$ for some $n\in\N^*$ and $(\omega_1,\cdots,\omega_n)\in\Omega^n$. A {\bf ladder forest} is a rooted forest $f=l_1\cdots l_n$ where the trees $l_i$s are all ladder trees.
 \begin{prop}
  The product $\yew$ is associative on ladder trees. In particular, for any ladder trees $l_1,\cdots,l_n$, the quantity $l_1\yew\cdots\yew l_n$ is well-defined and we furthermore have
  \begin{equation} \label{eq:yew_ladders_n}
   l_1 \yew \cdots \yew l_n = (\s^T)^{-1} ( \s^T(l_1) \shuffle^T \cdots \shuffle^T \s^T(l_n)).
  \end{equation}
 \end{prop}
 \begin{proof}
  For any forests $f$, $g$ and $h$, using the definition of $\yew$ we obtain
  \begin{equation*}
   (f\yew g)\yew h = (\s^T)^{-1}\left(\left(\s^T(f)\shuffle^T\s^T(g)\right)\shuffle^T\s^T(h)\right).
  \end{equation*}
  Furthermore, if $f$ and $g$ are ladder trees we have
  \begin{equation*}
   \s^T(f)\shuffle^T\s^T(g)=\iota\left(\s\left(\iota^{-1}(f)\right)\shuffle\s\left(\iota^{-1}(g)\right)\right)
  \end{equation*}
  with $\iota$ the canonical injection of words into rooted forests. The associativity of $\yew$ on ladders trees follows from this formula and the associativity of the usual shuffle product $\shuffle$ on words.
  
  For the second statement on the Proposition, we already have by the previous result that for any ladder trees $l_1,\cdots,l_n$, the quantity $l_1\yew\cdots\yew l_n$ is well-defined. Let us now prove Equation \eqref{eq:yew_ladders_n} by induction over $n\geq2$. 
  
  For $n=2$, Equation \eqref{eq:yew_ladders_n} is the definition of the yew product $\yew$. Assuming that for some $n\geq2$, the result holds for any $k\in\{2,\cdots,n\}$ take $l_1,\cdots,l_{n+1}$ be $n+1$ laddder trees. If $n+1=3$, the result holds from the computation we performed for the first point of the Proposition. If $n+1\geq4$ we have 
  \begin{align*}
	l_1 \yew \cdots\yew l_{n+1} &= ((\s^T)^{-1}(\s^T(l_1) \shuffle^T \s^T(l_2))) \yew l_3 \cdots \yew l_{n+1} \\
		&= ((\s^T)^{-1}(\s^T(l_1) \shuffle^T \s(l_2))) \yew (\s^T)^{-1}( \s^T(l_3) \shuffle^T \cdots \shuffle^T \s^T(l_{n+1})) \\
		&= (\s^T)^{-1} \left( \s^T(l_1) \shuffle^T \cdots \shuffle^T \s^T(l_{n+1})\right)
	\end{align*}
  by the associativity of $\yew$ on ladder trees, the induction hypothesis and the definition of $\yew$ respectively. This conclude our proof.
 \end{proof}
 This result allow us to define the following $\yew$-flattening map.
 \begin{defn} \label{defn:flattening_yew}
 The {\bf $\yew$-flattening map} $fl_\yew:\mathcal{F}_{\N^*}\longrightarrow\calw_{\N^*}$  from the algebra of rooted forests decorated by $\N^*$ and 
 the algebra of words $\mathcal{W}_{\N^*}$  written in the alphabet ${\N^*}$ is recursively defined by
 \begin{equation*}
  fl_\yew(\emptyset)=\emptyset,\quad fl_\yew(F_1F_2) = fl_\yew(F_1) \yew fl_\yew(F_2), \quad fl_\yew(B_+^n(F))=(n)\sqcup fl_\yew(F)
 \end{equation*}
 extended by linearity to a map on $\calf_{\N^*}$ and $\iota$ the canonical injection of words into rooted forests.
\end{defn}
Notice that we use the abuse of notations that we allowed ourrselves to make in Remark \ref{rk:yew_words}; namely that we use the $\yew$ product with words as arguments. Without this abuse of notations, the R.H.S. of the second term of the equation would be $\iota^{-1} \left( \iota \left( fl_\yew(F_1) \right) \yew \iota \left( fl_\yew(F_2) \right) \right)$.

We can now prove the main result of this Section.
\begin{thm} \label{thm:comm_fl_fl_yew}
 The flattening and binarisation maps are related by $fl_0 \circ \s^T = \s \circ fl_\yew$.
\end{thm}
\begin{proof}
 We will show that for any rooted forest $F\in\calf_{\N^*}$, we have $fl_0 \circ \s^T(F) = \s \circ fl_\yew(F)$ by induction on $n$, the number of vertices of $F$.
 
 If $n=0$, we have $F=\emptyset$ and $fl_0 \circ \s^T(\emptyset) = \emptyset = \s \circ fl_\yew(\emptyset)$ and the result hold for $n=0$. Now for some $n\in\N$ let us assume that the result holds for any rooted forest with $k\leq n$ vertices and let $F$ be a forest with $n+1$ vertices. We have two cases to consider:
 \begin{itemize}
  \item If $F$ is a rooted tree, then we can write $F=B_+^r(f)$ for some $r\in\N^*$ and rooted forests $f$ with $n$ vertices. Then we have
  \begin{align*}
		 \s \circ fl_\yew(F) &= \s \circ fl_\yew \circ B_+^r (f) \\
		 	&= \s \circ (r \sqcup fl_\yew (f)) &\text{by definition of $fl_\yew$}\\
&= (x\cdots xy)\sqcup (\s \circ fl_\yew)(f)  &\text{by definition of $\s$}\\
		 	&= (x\cdots xy)\sqcup ( fl_0 \circ \s^T)(f) &\text{by the induction hypothesis} \\
		 	&= fl_0\circ (B_+^x)^{\circ (r-1)} \circ B_+^y \circ \s^T(f) &\text{by definition of $fl_0$}\\
		 	&= fl_0 \circ \s^T \circ B_+^r (f) &\text{by definition of $\s^T$}\\
		 	&= fl_0 \circ \s^T(F).
  \end{align*}
  In this computation, $(x\cdots xy)$ was always $r-1$ $x$s and one $y$.
  \item If $F = t_1\cdots t_k$ with $t_1,\cdots, t_k$ rooted trees then:
		\begin{align*}
	  	 \s \circ fl_\yew(F) &= \s \circ( fl_\yew(t_1) \yew \cdots \yew fl_\yew(t_k)) &\text{by definition of $\yew$} \\
	  	 &= (\s \circ fl_\yew)(f_1) \shuffle \cdots \shuffle (\s \circ fl_\yew)(f_k) &\text{by Equation \eqref{eq:yew_ladders_n}} \\
	  	 &= (fl_0 \circ \s^T)(f_1) \shuffle \cdots \shuffle (fl_0 \circ \s^T)(f_k) &\text{by the induction hypothesis} \\
	  	 &= (fl_0 \circ \s^T) (f_1 \cdots f_k) &\text{by definition of $fl$}.
		\end{align*}
 \end{itemize}
 This conclude the induction and the proof.
\end{proof}
We need a simple Lemma to apply this result to TZVs.
\begin{lem}
 $fl_\yew$ maps convergent rooted forests to convergent words: $fl_\yew( \calf_{\N^*}^{\rm conv}) = \calw_{\N^*}^{\rm conv}$.
\end{lem}
\begin{proof}
 Notice that any convergent rooted tree $T$ is mapped by $fl_\yew$ to a linear combination of words starting by the decoration of the root of $T$. Furthermore, any convergent rooted forest $F$ is mapped by $fl_\yew$ to a linear combination of words starting by the decoration of the one of the root of $F$. Thus we have 
 $fl_\yew( \calf_{\N^*}^{\rm conv}) \subseteq \calw_{\N^*}^{\rm conv}$.
 
 Recall that $\iota:\calw_{\N^*}$ is the canonical injection of words into rooted forests, mapping any word to a ladder tree. Then from the definition of $fl_\yew$ we have, for any word $w\in\calw_{\N^*}$ (not necessarily convergent) $fl_\yew(\iota(w))=w$. This holds in particular for convergent words and we have therefore that $fl_\yew( \calf_{\N^*}^{\rm conv}) = \calw_{\N^*}^{\rm conv}$.
\end{proof}
We then obtain from Theorem \ref{thm:comm_fl_fl_yew} that the $\yew$-flattening relates TZVs and stuffle MZVs.
\begin{coro} \label{coro:TZVs_stuffle_MZVs}
 We have $\zeta^t = \zeta_\stuffle \circ fl_\yew$. In particular for any convergent forest $F\in\calf_{\N^*}^{\rm conv}$, $\zeta^t(F)$ is a linear combination with integer coefficients of stuffle MZVs given by the $\yew$-flattening. 
\end{coro}
\begin{proof}
 From the previous Lemma, we have that $\zeta^t$ and $\zeta_\stuffle \circ fl_\yew$ are defined on the same set. We need to prove that they are equal.
 \begin{align*}
	\zeta^t &= \zeta^T_\shuffle \circ \s^T &\text{by Proposition \ref{prop:crucial}}\\
	&= \zeta_\shuffle \circ fl_0 \circ \s^T &\text{by Theorem  \ref{thm:flattening}} \\
	&= \zeta_\shuffle \circ \s \circ fl_\yew  &\text{by Theorem \ref{thm:comm_fl_fl_yew}} \\
	&= \zeta_\stuffle \circ fl_\yew &\text{by Kontsevitch's relation \eqref{eq:Kontsevitch}}
	\end{align*} 
\end{proof}
In particular, we will use the $\yew$-flattening to compute some CZVs in Section \ref{section:five}. But first, we can use the computations performed in Example \ref{ex:yew_products} to express some TZVs in terms of MZVs.
\begin{ex}
 By definition of the $\yew$-flattening we have
 \begin{equation*}
  fl_\yew\left(\tdquatredeux{p}{m}{n}{k} \right) = (p)\sqcup((nk)\yew(m))
 \end{equation*}
 (with the abuse of notation discussed after Definition \ref{defn:flattening_yew}). Then by Corollary \ref{coro:TZVs_stuffle_MZVs} and the second computation of Example \ref{ex:yew_products} we obtain
 \begin{align*}
  & \zeta^t\left(\tdquatredeux{p}{m}{n}{k} \right) =  \sum_{i=0}^{m-1}\binom{n-1+i}{i}\left[\sum_{i'=0}^{k-1}\binom{m-i-1+i'}{i'}\zeta_\stuffle(p;n+i;m-i+i';k-i') \right. \\
  + & \left.\sum_{j'=0}^{m-i-1}\binom{k-1+j'}{j'}\zeta_\stuffle(p;n+i;k+j';m-i-j')\right]
+ \sum_{j=0}^{n-1}\binom{m-1+j}{j}\zeta_\stuffle(p;m+j;n-j;k).
 \end{align*}
 for any $p\geq2$, $(m,n,k)\in(\N^*)^3$.
 
 Finally, we can also use Corollary \ref{coro:TZVs_stuffle_MZVs} on the tree $\tcinq{p}{n}{k}{m}{l}$ and the third computation of Example \ref{ex:yew_products} to obtain
 \begin{align*}
  & \zeta^t\left(\tcinq{p}{n}{k}{m}{l}\right) =  \sum_{a=0}^{m-1}\binom{n-1+a}{a}\left\{ \sum_{i=0}^{k-1}\binom{m-a-1+i}{i}\left[\sum_{i'=0}^{l-1}\binom{k-i-1+i'}{i'}
  \zeta_\stuffle(p;n+a;m-a+i;k-i+i';l-i') \right]\right. \\
  + & \left. \sum_{j'=0}^{k-i-1}\binom{l-1+j'}{j'}\zeta_\stuffle(p;n+a;m-a+i;l+j';k-i-j')
   + \sum_{j=0}^{m-a-1}\binom{k-1+j}{j}\zeta_\stuffle(p;n+a;k+j;m-a-j;l)\right\} \\
 + &  \sum_{b=0}^{n-1}\binom{m-1+b}{b}\left\{\sum_{i=0}^{l-1}\binom{n-b-1+i}{i}\left[\sum_{i'=0}^{k-1}\binom{l-i-1+i'}{i'}\zeta_\stuffle(p;m+b;n-b+i;l-i+i';k-i')    \right.\right.  \\
  + & \left.\left. \sum_{j'=0}^{l-i-1}\binom{k-1+j'}{j'}\zeta_\stuffle(p;m+b;n-b+i;k+j';l-i-j')\right] + \sum_{j=0}^{n-b-1}\binom{l-1+j}{j}\zeta_\stuffle(p;m+b;l+j;n-b-j;k)\right\}.
 \end{align*}
 Notice that Corollary \ref{coro:TZVs_stuffle_MZVs}, together with Theorem \ref{thm:yew_formula} allows to (relatively) efficiently compute the evalutation of TZVs in terms of MZVs. In particular, it seems tractable for computers. The implementation of these results in formal language computation is left for future research. 
\end{ex}

\subsection{A new stuffle product}

With a similar strategy than the one used to define the $\yew$ product, we can define a product generalising the stuffle product for words.
\begin{defn} \label{defn:new_stuffle}
 Let $\stuffle^t:\calf_{\N^*}\otimes\calf_{\N^*}\longrightarrow\calf_{\N^*}$ be the product defined as the following composition:
 \begin{equation*}
  \stuffle^t:=\iota\circ\stuffle\circ(\s^{-1}\otimes\s^{-1})\circ(fl_0\otimes fl_0)\circ(\s^T\otimes\s^T)
 \end{equation*}
 where $\iota$ is the canonical map sending words to ladder trees. In other words, for any rooted forests $(F_1,F_2)\in(\calf_{\N^*})^2$ we have
 \begin{equation*}
  F_1\stuffle^t F_2:=\iota\left[\s^{-1}\left(fl_0\left(\s^T(F_1)\right)\right)\stuffle~\s^{-1}\left(fl_0\left(\s^T(F_1)\right)\right)\right].
 \end{equation*}
\end{defn}
Before stating the main properties of this new product, let us point out that its well-definedness is insured by the following diagram which is another way to define $\stuffle^t$.
\begin{equation*}
 \xymatrix{ \calf_{\N^*}\otimes\calf_{\N^*} \ar[r]^-{\s^T\otimes\s^T}  & \calf_{\{x,y\}}\otimes\calf_{\{x,y\}} \ar[r]^{fl_0\otimes fl_0} & \calw_{\{x,y\}}\otimes\calw_{\{x,y\}} \ar[r]^-{\s^{-1}\otimes\s^{-1}} & \calw_{\N^*}\otimes\calw_{\N^*} \ar[r]^-{\stuffle} & \calw_{\N^*} \ar[r]^-{\iota} & \calf_{\N^*}
 }
\end{equation*}
We now state all the useful properties of $\stuffle^t$ in only one Proposition.
\begin{prop} \label{prop:propriete_stuffle_t}
 $\stuffle^t$ is associative and commutative. The empty forest is the neutral element for this product. Furthermore, $\stuffle^t$ stabilises convergent forests.
\end{prop}
\begin{proof}
 Associativity follows from the associativity of the stuffle product $\stuffle$ and the fact that for any linear combination of words $w$ we have
 \begin{equation} \label{eq:trivial_stuffle_t}
  \s^{-1}\left(fl_0\left(\s^T(w)\right)\right)=w.
 \end{equation}
 Then, for any forests $F_1$, $F_2$ and $F_3$ in $\calf_{\N^*}$, writing $w_i:=\s^{-1}\left(fl_0\left(\s^T(F_i)\right)\right)$ we have
 \begin{align*}
  (F_1\stuffle^t F_2)\stuffle^t F_3 & = \left[\iota(w_1\stuffle w_2)\right]\stuffle^t F_3 \\
  & = \iota\left[\left(w_1\stuffle w_2\right)\stuffle w_3\right] &\text{ by Equation \eqref{eq:trivial_stuffle_t}} \\
  & = \iota\left[w_1\stuffle \left(w_2\stuffle w_3\right)\right] &\text{ by associativity of }\stuffle \\
  & = F_1\stuffle^t (F_2\stuffle^t F_3).
 \end{align*}
 Commutativity of $\stuffle^t$ follows directly from the commutativity of $\stuffle$.
 
 To see that the empty forest is the unit of $\stuffle^t$, recall that the empty word in $\calw_{\N^*}$ is the unit for the stuffle product $\stuffle$ in $\calw_{\N^*}$. The result then follows from the fact that $\s^{-1}\circ fl_0\circ\s^T$ sends the empty forest in $\calf_{\N^*}$ to the empty word in $\calw_{\N^*}$.
 
 Finally, the fact that convergent forests form a sub-algebra for the $\stuffle^t$ stuffle product simply follows from the fact that $\s^{-1}\circ fl_0\circ\s^T$ sends convergent forests into convergent words and that $\stuffle$ stabilises convergent words.


\end{proof}
The fact that $\stuffle^t$ stabilises convergent forests allow us to state the next result, which is the justification of the introduction of this product.
\begin{thm} \label{thm:TZVs_alg_morph}
 $\zeta^t$ is an algebra morphism for the $\stuffle^t$ product: for any convergent forests $(F_1,F_2)\in\calf_{\N^*}^{\rm conv}$ we have
 \begin{equation*}
  \zeta^t(F_1\stuffle^t F_2)=\zeta^t(F_1)\zeta^t(F_2).
 \end{equation*}
\end{thm}
 \begin{proof}
  This follows directly from the properties of the MZVs, AZVs and TZVs: for any forests $F_1$ and $F_2$ in $\calf_{\N^*}^{\rm conv}$, $F_1\stuffle^tF_2$ also belongs to $\calf_{\N^*}^{\rm conv}$ by Proposition \ref{prop:propriete_stuffle_t}. We then have
  \begin{align*}
   \zeta^t(F_1\stuffle^t F_2) & = \zeta^t\left(\iota\left[\s^{-1}\left(fl_0\left(\s^T(F_1)\right)\right)\stuffle~ \s^{-1}\left(fl_0\left(\s^T(F_2)\right)\right)\right]\right) \\
   & = \zeta_{\stuffle}\left(\s^{-1}\left(fl_0\left(\s^T(F_1)\right)\right)\stuffle~ \s^{-1}\left(fl_0\left(\s^T(F_2)\right)\right)\right) & \text{ by Equation \eqref{eq:TZVs_stuffle_words}} \\
   & = \zeta_{\stuffle}\left(\s^{-1}\left(fl_0\left(\s^T(F_1)\right)\right)\right)\zeta_{\stuffle}\left(\s^{-1}\left(fl_0\left(\s^T(F_2)\right)\right)\right) & \text{ by Equation \eqref{eq:stuffle_MZVs}} \\
   & = \zeta_{\shuffle}\left(fl_0\left(\s^T(F_1)\right)\right)\zeta_{\shuffle}\left(fl_0\left(\s^T(F_2)\right)\right)& \text{ by Kontsevitch's relation \eqref{eq:Kontsevitch}}\\
   & = \zeta_{\shuffle}^T\left(\s^T(F_1)\right))\zeta_{\shuffle}^T\left(\s^T(F_2)\right)& \text{ by Theorem \ref{thm:flattening}} \\
   & = \zeta^t(F_1)\zeta^t(F_2)
  \end{align*}
  by Proposition \ref{prop:crucial}. Thus $\zeta^t$ is an algebra morphism for $\stuffle^t$ as stated.
 \end{proof}
 Recall that the product $\stuffle^t$ is a generalisation of the usual stuffle product in the sense that we have $\iota(w_1\stuffle w_2)=\iota(w_1)\stuffle^t\iota(w_2)$. We therefore have a generalisation to rooted trees of the last property of MZVs (namely that MZVs are algebra morphisms for the stuffle product) that we set ourselves to generalise. This conclude our study of AZVs and TZVs for themselves and we will now turn our attention to consequences of these properties to other generalisation of MZVs.

 \section{Applications to Mordell-Tornheim zeta values} \label{section:three}

 A special class of Mordell-Tornheim zetas were introduced in \cite{To50} and studied (albeit not in full generality) in \cite{Mo58} and \cite{Ho92}. Later on they were further investigated in full generality in \cite{Ts05} and \cite{BrZh10}. The study of the finite version of Mordell-Tornheim zeta values was carried on in \cite{Ka16}. In this section we illustrate how Theorem \ref{thm:integral_sum}, together with the results of \cite{Cl20}, has far-reaching consequences for various generalisations of MZVs. In particular, it provides new and straightforward proofs for some results regarding Mordell-Tornheim zeta values.
 \begin{defn}
  Let $(s,s_1,\cdots,s_r)\in\N^{r+1}$ be sequence of non-negative integers. The {\bf Mordell-Tornheim zeta value} associated to this sequence is
  \begin{equation} \label{eq:MT_zeta}
   MT(s_1,\cdots,s_r|s) := \sum_{n_1,\cdots,n_r\geq1}\frac{1}{n_1^{s_1}\cdots n_r^{s_r}(n_1+\cdots+n_r)^s}
  \end{equation}
  whenever this series is convergent. The integer $r$ is called the {\bf depth} of this Mordell-Tornheim zeta values, $s+s_1+\cdots+s_r$ its {\bf weight} and $s_1+\cdots+s_r$ its {\bf partial depth}.
 \end{defn}
 Since the series in \eqref{eq:MT_zeta} is invariant under a permutation of the $s_i$, it is traditional to assume $s_1\leq\cdots\leq s_r$. We will follow this convention.
 
 Bradley and Zhou gave in \cite[Theorem 2.2]{BrZh10} a condition for the convergence of the series \eqref{eq:MT_zeta} to hold in the more general case where the $s_i$ are complex numbers. With our convention, this condition reads: if for any $k\in\{1,\cdots,r\}$, the inequality
  \begin{equation} \label{eq:conv_crit_BrZh}
   s+\sum_{i=1}^k s_i > k
  \end{equation}
  holds, then the series \eqref{eq:MT_zeta} converges.
  
  The same authors also proved in \cite[Theorem 1.1]{BrZh10} that any convergent Mordell-Tornheim zeta value of weight $w$ and depth $r$ can be written as a linear combination with rational coefficients of MZVs of weight $w$ and depth $r$. For finite Mordell-Tornheim zeta values, the same result was obtained in
  \cite[Theorem 1.2]{Ka16}.
  
  We will show here that our Theorem \ref{thm:tree_zeta_MZVs} gives elementary proofs of the results of Bradley and Zhou for the case $s_2>0$. We also provide an explicit formula for Mordell-Tornheim zeta values in the case $s_1=0$, which is reminiscent of \cite[Theorem 1.2]{Ka16} for finite Mordell-Tornheim zeta values.
  \begin{prop} \label{prop:MT_s1_0}
   The Mordell-Tornheim zeta values associated to the sequence $(s,s_1=0,s_2>0,\cdots,s_r)$ is convergent whenever $s\geq2$. In this case $MT(s_1=0,\cdots,s_r|s)$ can be written as a linear combination with integer coefficients of MZVs of weight $s+s_2+\cdots+s_r$ and depth $r$ given by 
   \begin{equation} \label{eq:MT_s1_0}
    MT(s_1=0,\cdots,s_r|s) = \zeta_\shuffle\left((\underbrace{x\cdots x}_{s-1}y)\sqcup \left((\underbrace{x\cdots x}_{s_2-1}y)\shuffle\cdots\shuffle(\underbrace{x\cdots x}_{s_r-1}y)\right)   \right).
   \end{equation}
  \end{prop}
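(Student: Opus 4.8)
The plan is to realize $MT(s_1=0,s_2,\dots,s_r|s)$ as a tree zeta value $\zeta^t(F)$ for a suitably chosen $\N^*$-decorated rooted tree $F$, and then invoke Theorem \ref{thm:tree_zeta_MZVs} to express it in terms of MZVs. First I would look at the defining series
\[
 MT(0,s_2,\dots,s_r|s) = \sum_{n_1,\dots,n_r\geq 1}\frac{1}{n_2^{s_2}\cdots n_r^{s_r}(n_1+\cdots+n_r)^s},
\]
and observe that, setting aside the variable $n_1$, this is exactly the series $\zeta^t(F)$ attached to the \emph{corolla-type} tree $F = B_+^s(\bullet_1\,\bullet_{s_2}\cdots\bullet_{s_r})$: a root decorated by $s$ carrying $r$ leaves, one decorated by $1$ (accounting for the summation variable $n_1$ that does not appear in the denominator) and the others decorated by $s_2,\dots,s_r$. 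Indeed, for this tree $V(F)$ consists of the root $\rho$ and the $r$ leaves, the only descendant relations are $\rho\preceq(\text{each leaf})$, so the factor attached to the root is $(n_\rho+n_1+n_{s_2}+\cdots+n_{s_r})^{-s}$ and the factor attached to each leaf $\ell$ is $n_\ell^{-\alpha_\ell}$; renaming $n_\rho+n_1$ as a single variable ranging over integers $\geq 2$ — or more simply keeping $n_\rho\geq 1$, $n_1\geq 1$ — reproduces \eqref{eq:MT_zeta} with $s_1=0$.

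Next I would address convergence. The tree $F$ is convergent in the sense of Definition \ref{def:conv_forest_series} precisely when its root decoration satisfies $s\geq 2$, which is the hypothesis; by Proposition \ref{prop:crucial}, $\zeta^t(F)$ then converges and equals $\zeta(\fraks^T(F))$. One should check that the manipulation identifying the MT series with $\zeta^t(F)$ is legitimate, i.e. that the series $MT(0,s_2,\dots,s_r|s)$ converges under $s\geq 2$; this follows either from the Bradley–Zhou criterion \eqref{eq:conv_crit_BrZh} (with $s_1=0$ the binding inequality is $s>1$) or, more self-containedly, from the convergence of $\zeta^t(F)$ together with a comparison of the two (positive-term) series.

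Then the computation is mechanical: apply $\fraks^T$ to $F$. Since $\fraks^T(B_+^n(G)) = (B_+^x)^{\circ(n-1)}\circ B_+^y(\fraks^T(G))$ and $\fraks^T$ is multiplicative on forests, the leaf decorated $1$ becomes $\tdun{y}$, a leaf decorated $s_i$ becomes the ladder $(\underbrace{x\cdots x}_{s_i-1}y)$, and the whole tree becomes a ladder of $x$'s of length $s-1$ topped by a $y$, grafted onto the forest of these leaf-ladders. Now apply the flattening map $fl$ from Definition \ref{defn:flattening}: $fl$ turns the product of the leaf-subtrees into the shuffle of their flattenings, i.e. $(\underbrace{x\cdots x}_{1-1}y)\shuffle(\underbrace{x\cdots x}_{s_2-1}y)\shuffle\cdots\shuffle(\underbrace{x\cdots x}_{s_r-1}y)$, and grafting by $(B_+^x)^{\circ(s-1)}B_+^y$ prepends the word $(\underbrace{x\cdots x}_{s-1}y)$ via concatenation. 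Since $(\underbrace{x\cdots x}_{1-1}y)=(y)$ is the empty-$x$-ladder-topped-$y$, i.e. just the letter $y$, and $w\shuffle y$ merely inserts a $y$, the cleanest bookkeeping is to keep the leaf decorated $1$ explicitly; but in the final MZV it can be shuffled in. Applying $\zeta_\shuffle$ and using Theorem \ref{thm:flattening} (so that $\zeta^t = \zeta_\shuffle\circ fl\circ\fraks^T$, which is the content of Theorem \ref{thm:tree_zeta_MZVs}) yields exactly \eqref{eq:MT_s1_0}; the weight is $1+(s-1)+\sum_{i=2}^r s_i = s+s_2+\cdots+s_r$ wait — I would double-check: the letter $y$ from the $n_1$-leaf contributes weight $1$, so the total is $s + 1 + \sum_{i\geq2}s_i$; the discrepancy with the claimed weight $s+s_2+\cdots+s_r$ must be resolved by absorbing $n_1$ into the root variable rather than treating it as a separate leaf, i.e. by directly reading off the series on the tree $B_+^s(\bullet_{s_2}\cdots\bullet_{s_r})$ after the change of variable $m=n_\rho+n_1$, $m\geq 2$, $n_\rho\geq 1$: the inner sum over $n_\rho$ for fixed $m$ gives $m-1$ choices, and $\sum_{m\geq 2}\frac{m-1}{m^s}(\cdots) = \sum_{m\geq 1}\frac{1}{m^{s-1}}(\cdots) - \sum_{m\geq 1}\frac{1}{m^s}(\cdots)$, which is the source of the \emph{integer} (not merely rational) coefficients and of the stated weight. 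So the main obstacle — and the step to carry out carefully — is precisely this reduction of the $s_1=0$ variable: turning the spurious summation index into the combination $\zeta^t$ of the $(s-1)$- and $s$-rooted corollas, equivalently checking that shuffling the letter $y$ into $(\underbrace{x\cdots x}_{s-1}y)$ produces the telescoping $(\underbrace{x\cdots x}_{s-2}y) - (\underbrace{x\cdots x}_{s-1}y)$-type identity at the level of MZVs that matches the $m-1$ counting, and confirming the coefficients are integers.
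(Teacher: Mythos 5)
Your overall strategy --- realize the Mordell--Tornheim sum as a tree zeta value $\zeta^t$ of a corolla and then run $\fraks^T$, $fl$, $\zeta_\shuffle$ through Theorem \ref{thm:tree_zeta_MZVs} --- is exactly the paper's, but the tree you choose is wrong, and the ``telescoping'' story you build to patch it is a red herring.

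Your corolla $F = B_+^s(\bullet_1\,\bullet_{s_2}\cdots\bullet_{s_r})$ has $r+1$ vertices, so $\zeta^t(F)$ has $r+1$ summation variables and, because you decorated the extra leaf with $1$, its general term carries a spurious factor $1/n_{\ell_1}$; this is \emph{not} $MT(0,s_2,\dots,s_r\,|\,s)$, and your ``renaming $n_\rho+n_1$ as a single variable'' step does not eliminate that factor. You then notice the weight is off by $1$ (it is also off in depth, $r+1$ vs.\ $r$) and propose to fix it by absorbing the extra variable via $m=n_\rho+n_1$ and a resulting $\sum_{m\geq 2}(m-1)/m^s$ telescoping identity. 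None of that is needed, and as written it is incoherent: once you pass to the correct tree $T=B_+^s(\bullet_{s_2}\cdots\bullet_{s_r})$, there is no separate ``$n_1$'' to absorb. The root variable $n_\rho$ of $T$ already \emph{is} $n_1$: because $s_1=0$, the factor $n_1^{s_1}$ is absent from the denominator, so $n_1$ appears only inside $(n_1+\cdots+n_r)^s$, which is exactly how the root variable of a corolla enters \eqref{eq:TZV}. Thus $\zeta^t(T)=MT(0,s_2,\dots,s_r\,|\,s)$ on the nose, by a pure relabelling of the $r$ summation indices, with no $m-1$ counting and no subtraction of zeta series.

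Once you have the correct $r$-vertex corolla, the rest of your computation is right and matches the paper: convergence follows from $T\in\calf^{\rm conv}_{\N^*}$ iff $s\geq2$ plus Proposition \ref{prop:crucial}; $\fraks^T(T)$ is a ladder $(B_+^x)^{\circ(s-1)}B_+^y$ on the forest of $r-1$ leaf-ladders; flattening gives $(\underbrace{x\cdots x}_{s-1}y)\sqcup\bigl((\underbrace{x\cdots x}_{s_2-1}y)\shuffle\cdots\shuffle(\underbrace{x\cdots x}_{s_r-1}y)\bigr)$; the weight $s+s_2+\cdots+s_r$ and depth $r$ are now correct; and the coefficients are nonnegative integers simply because they are shuffle multiplicities --- no telescoping cancellation is involved.
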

  \begin{rk}
   Notice that the condition $s\geq2$ is equivalent to the convergence criterion \eqref{eq:conv_crit_BrZh} of Bradley and Zhou in the case $s_1=0$ and $s_2>0$.
  \end{rk}
  \begin{proof}
   Observe that in the case $s_1=0$ and $s_2>0$ the series \eqref{eq:MT_zeta} coincide with $\zeta^t(T)$ with $(T,d_T)$ the decorated tree with $r$ vertices and $r-1$ leaves. Its root is decorated by $s$ and its leaves by $s_2,\cdots,s_r$.
   Thus $T$ is a convergent tree whenever $s\geq2$. In this case we then have by Theorem \ref{thm:tree_zeta_MZVs}
   \begin{equation*}
    MT(s_1=0,\cdots,s_r|s) = (\zeta_\shuffle\circ fl_0 \circ\fraks^T)(T).
   \end{equation*}
   Now $\fraks^T(T)$ is a tree with only one branching vertex. The segment between the root and the branching vertex contains $s$ vertices, the first $s-1$ being decorated by $x$ and the last one by $y$. Each of the $r-1$ segments between the branching vertex and one leaf contain $s_i$ vertices (with $i\in\{2,\cdots,r\}$) whose first $s_i-1$ vertices are decorated by $x$ and the last one by $y$. Therefore its flattening is precisely
   \begin{equation*}
    fl(\fraks^T(T))=(\underbrace{x\cdots x}_{s-1}y)\sqcup \left((\underbrace{x\cdots x}_{s_2-1}y)\shuffle\cdots\shuffle(\underbrace{x\cdots x}_{s_r-1}y)\right).
   \end{equation*}
   This gives Equation \eqref{eq:MT_s1_0}. In particular $MT(s_1=0,\cdots,s_r|s)$ can be written as a linear combination with integer coefficients of MZVs of weight $s+s_2+\cdots+s_r$ and depth $r$ (the number of $y$s in each words appearing in the expression of $MT(s_1=0,\cdots,s_r|s)$).
  \end{proof}
  We are now ready to prove the main result of this section.
  \begin{thm} \label{thm:MT_tree}
   Let $s_1\in\N^*$. The Mordell-Tornheim zeta value associated with $(s,s_1,\cdots,s_r)$ is convergent whenever $s\geq1$. In this case $MT(s_1,\cdots,s_r|s)$ can be written as a linear combination of MZVs of weight $s+\sum_{i=1}^rs_i$ and depth $r$ with integer coefficients.
  \end{thm}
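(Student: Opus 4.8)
The plan is to reduce the general case $s_1\geq1$ to the already-treated case $s_1=0$ by peeling off one power of $n_1$ at a time using a partial-fractions-type manipulation, and then invoke Proposition~\ref{prop:MT_s1_0} together with Theorem~\ref{thm:tree_zeta_MZVs}. First I would observe that the convergence claim is immediate from the Bradley--Zhou criterion~\eqref{eq:conv_crit_BrZh}: when $s_1\geq1$ and $s\geq1$, the inequality $s+\sum_{i=1}^k s_i>k$ holds for every $k$ since each $s_i\geq s_1\geq1$ forces $\sum_{i=1}^k s_i\geq k$ and $s\geq1$ gives the strict inequality (with the usual convention $s_1\leq\cdots\leq s_r$; the alternative is to cite \cite[Theorem 2.2]{BrZh10} directly). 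The substantive content is the MZV-decomposition, and the natural route is an induction on $s_1+\cdots+s_r$, or more precisely on $s_1$, using a decomposition formula for Mordell--Tornheim values.

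The key algebraic step is the identity alluded to in the introduction as Equation~\eqref{eq:MT_decom}. For $r\geq2$ and $s_1\geq1$ one writes, using $\frac{1}{n_1(n_1+\cdots+n_r)}=\frac{1}{n_2+\cdots+n_r}\bigl(\frac{1}{n_1}-\frac{1}{n_1+\cdots+n_r}\bigr)$ inside the defining series and re-summing, a relation expressing $MT(s_1,\dots,s_r|s)$ as a $\Z$-linear combination of Mordell--Tornheim values of the same weight and depth but with the exponent pattern shifted so that either the first exponent decreases or the problem collapses to lower depth. Iterating this brings $s_1$ down to $0$, at which point Proposition~\ref{prop:MT_s1_0} expresses the resulting terms as $\zeta_\shuffle$ of explicit shuffle polynomials of convergent words, each of which is a $\Z$-linear combination of MZVs of the correct weight $s+\sum_i s_i$ and depth $r$ (the shuffle product preserves total length and total number of $y$'s, and concatenation with $x^{s-1}y$ adds $s$ to the weight and $1$ to the depth; one must check the depth bookkeeping matches after the partial-fraction steps, which do not change the weight and reduce depth only in degenerate terms that are then handled separately or seen to still contribute depth-$r$ pieces). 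Alternatively, and perhaps more cleanly, one can argue directly that for $s_1\geq1$ the series~\eqref{eq:MT_zeta} is itself, up to the partial-fraction reduction, a tree zeta value $\zeta^t$ of a convergent $\N^*$-decorated forest and apply Theorem~\ref{thm:tree_zeta_MZVs} verbatim; the point of the partial fractions is precisely to rewrite the ``flat'' denominator $n_1^{s_1}$ in a form compatible with the nested-sum structure~\eqref{eq:TZV}.

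Concretely I would carry out the steps in this order: (1) dispose of convergence via~\eqref{eq:conv_crit_BrZh}; (2) state and prove the decomposition formula~\eqref{eq:MT_decom} by the partial-fraction identity above, being careful about the depth-one and depth-two base cases (depth one is a Riemann zeta, depth two with $s_1\geq1$ reduces by the classical Euler reflection/partial-fraction formula to a $\Z$-combination of double and single zetas of the right weight); (3) run the induction on $s_1$ to reduce to $s_1=0$; (4) apply Proposition~\ref{prop:MT_s1_0} to each resulting term and collect, verifying the weight is $s+\sum_{i=1}^r s_i$ throughout (partial fractions preserve weight) and that the depth of every MZV produced is $r$; (5) conclude.

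The main obstacle I anticipate is the depth bookkeeping in step (4): the partial-fraction reduction in step (2), when it lowers the depth of an intermediate Mordell--Tornheim term, produces terms whose naive depth is $r-1$, yet the theorem asserts all MZVs in the final answer have depth exactly $r$. Resolving this requires showing that such lower-depth Mordell--Tornheim pieces come multiplied by an extra simple zeta factor (from the $\frac{1}{n_2+\cdots+n_r}$ that splits off), so that after applying the depth-$\,r-1\,$ version of the result and multiplying back in, the stuffle/shuffle expansion lands in depth $r$; this is exactly the kind of careful tracking that makes the clean statement ``weight $w$, depth $r$'' nontrivial, and it is the part of the argument where I would spend the most care, likely packaging it as the separate lemma~\eqref{eq:MT_decom} with a precise statement of how weight and depth transform under each partial-fraction move.
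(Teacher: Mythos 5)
Your high-level plan (convergence from Bradley--Zhou, a partial-fraction reduction, induction, finish with Proposition~\ref{prop:MT_s1_0}) matches the paper's strategy, but the specific partial-fraction identity you chose is not the one that proves Equation~\eqref{eq:MT_decom}, and it does not close the induction inside the Mordell--Tornheim class. Writing
\begin{equation*}
\frac{1}{n_1(n_1+\cdots+n_r)}=\frac{1}{n_2+\cdots+n_r}\left(\frac{1}{n_1}-\frac{1}{n_1+\cdots+n_r}\right)
\end{equation*}
introduces the factor $(n_2+\cdots+n_r)^{-1}$, which is not of Mordell--Tornheim shape; the resulting series are more general conical/Shintani-type sums, not a $\Z$-linear combination of $MT$ values of the same depth as you assert. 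The depth-bookkeeping ``obstacle'' you flag at the end is a real symptom of this: with your identity the recursion leaves the $MT$ class, and the sketch does not show how to return to it.

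The paper instead uses the symmetric identity
\begin{equation*}
\frac{1}{n_1\cdots n_r\,(n_1+\cdots+n_r)}=\frac{1}{(n_1+\cdots+n_r)^2}\sum_{i=1}^{r}\prod_{\substack{j=1\\ j\neq i}}^{r}\frac{1}{n_j},
\end{equation*}
which follows from $\sum_i n_i=n_1+\cdots+n_r$. Applied to the summand this gives, with no extraneous factors,
\begin{equation*}
MT(s_1,\dots,s_r\mid s)=\sum_{i=1}^{r}MT(s_1,\dots,s_i-1,\dots,s_r\mid s+1),
\end{equation*}
a relation that stays within pure $MT$ values, preserves both weight and depth, raises $s$ by one, and lowers one $s_i$ by one. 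The paper then inducts on the \emph{partial depth} $s_1+\cdots+s_r$ (not on $s_1$): each summand either has some $s_i-1=0$, in which case Proposition~\ref{prop:MT_s1_0} applies (valid since $s+1\geq 2$), or has all exponents still positive, in which case the induction hypothesis applies. Because depth $r$ is preserved at every step, the bookkeeping you worried about never arises. To repair your argument, replace your partial fraction by the paper's symmetric one (or prove rigorously that iterating your identity, and handling the auxiliary $(n_2+\cdots+n_r)^{-1}$ factors, eventually lands back in the $MT$ class with the correct weight and depth --- that is substantially more work than what you have sketched).

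Your appeal to Bradley--Zhou for convergence is fine as a shortcut; the paper instead gets convergence for free from the same induction, since each $MT$ value on the right of the decomposition is known convergent by the inductive hypothesis or by Proposition~\ref{prop:MT_s1_0}.
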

  \begin{rk}
   As before, the condition $s\geq1$ is equivalent to the convergence criterion \eqref{eq:conv_crit_BrZh} of Bradley and Zhou in the case $s_1>0$.
  \end{rk}
  \begin{proof}
   We prove this result by induction on the partial depth $n:=s_1+\cdots+s_r$ of $MT(s_1,\cdots,s_r|s)$. If $n=1$, the conditions $s_1\geq1$ and $s_i\geq s_1$ for all $i\in\{1,\cdots,r\}$ implies $r=1$ and $s_1=1$. In this case the Mordell-Tornheim zeta value reduce to the usual zeta value $\zeta(s+1)$ which is convergent if $s>0$.
   
   Now, assume the result holds for all Mordell-Tornheim zeta values of partial weight $n$ with $s_1\geq1$. Let $s$ and $s_1$ be greater or equal to one and $(s_1,\cdots,s_r)$ be an increasing sequence of integers such that $s_1+\cdots+ s_r=n+1$. Using the decomposition into simple fraction
   \begin{equation*}
    \frac{1}{n_1\cdots n_r(n_1+\cdots+n_r)} = \frac{1}{(n_1+\cdots+n_r)^2} \sum_{i=1}^r\prod_{\substack{j=1 \\j\neq i}}^r \frac{1}{n_j}
   \end{equation*}
   (which holds since $\prod_{\substack{j=1 \\j\neq i}}^r \frac{1}{n_j}=\frac{n_i}{n_1\cdots n_r}$) we obtain
   \begin{equation*}
    \frac{1}{n_1^{s_1}\cdots n_r^{s_r}(n_1+\cdots+n_r)^s} = \sum_{i=1}^r\frac{1}{n_1^{s_1}\cdots n_i^{s_i-1}\cdots n_r^{s_r}(n_1+\cdots+n_r)^{s+1}}.
   \end{equation*}
   Summing over the $n_i$ we obtain (up to an irrelevant permutation of the $s_i$)
   \begin{equation*}
    MT(s_1,\cdots,s_r|s) = \sum_{i=1}^r MT(s_1,\cdots,s_i-1,\cdots,s_r|s+1)
   \end{equation*}
   whenever the series of one of the two side converges. Examining each of the terms of the RHS we have then two cases to consider.
   \begin{enumerate}
    \item $s_i=1$. Then by Proposition \ref{prop:MT_s1_0} $MT(s_1,\cdots,s_i-1,\cdots,s_r|s+1)$ is convergent since $s+1\geq2$, and is a linear combination with integer coefficients of MZVs of depth $r$ and weight 
    \begin{equation*}
     s+1+\sum_{\substack{k=1\\k\neq i}}^r s_k.
    \end{equation*}
    \item $s_i\geq2$. In this case by the induction hypothesis $MT(s_1,\cdots,s_i-1,\cdots,s_r|s+1)$ is convergent and can be written as a linear combination of MZVs of weight $s+\sum_{i=1}^rs_i$ and depth $r$ with integer coefficients. 
   \end{enumerate}
   In any case the RHS of the equation above is convergent whenever $s\geq1$ and is then a finite sum of linear combinations of MZVs of weight $s+\sum_{i=1}^rs_i$ and depth $r$ with integer coefficients.
   
   This conclude the induction and proves the Theorem.
  \end{proof}
  Let us further point out that the decomposition formula
  \begin{equation} \label{eq:MT_decom}
    MT(s_1,\cdots,s_r|s) = \sum_{i=1}^r MT(s_1,\cdots,s_i-1,\cdots,s_r|s+1)
   \end{equation}
   which holds whenever $s$ and $s_1$ are both strictly positive might be of interest and was not, to the best of the author's knowledge, previously available in literature\footnote{however, a version for finite Mordell-Tornheim zeta values exists. It is the first equation of \cite[Corollary 5.2]{BTT21}. I am very thanksful to Masataka Ono for finding out this reference and kindly telling me about it.}. This formula, together with Proposition \ref{prop:MT_s1_0} allows us to derive expressions for the Mordell-Tornheim zeta values with $s_1>0$. Indeed, iterating \eqref{eq:MT_decom} until one of the $s_i$ is cancelled, we obtain 
   \begin{equation*}
    MT(s_1,\cdots,s_r|s) = \sum_{i=1}^r \sum_{p_1=0}^{s_1-1}\cdots\sum_{p_{i-1}=0}^{s_{i-1}-1}\sum_{p_{i+1}=0}^{s_{i+1}-1}\cdots\sum_{p_r=0}^{s_r-1} MT\left(q_1,\cdots,q_r|s+\sum_{j=1}^r p_j\right)
   \end{equation*}
   with in each of the terms in the RHS, $p_i:=s_i$ and $q_j:=s_j-p_j$ for $q\in\{1,\cdots,r\}$. Each of the Mordell-Tornheim zeta values in the RHS are of the type treated by Proposition \ref{prop:MT_s1_0} since $q_i=0$ for each $i$ in the outermost sum. Thus we obtain
   \begin{align} \label{eq:expression_MT}
    & MT(s_1,\cdots,s_r|s)  =  \\
    \nonumber 
\sum_{i=1}^r \sum_{p_1=0}^{s_1-1}\cdots & \sum_{p_{i-1}=0}^{s_{i-1}-1}\sum_{p_{i+1}=0}^{s_{i+1}-1}\cdots\sum_{p_r=0}^{s_r-1} \zeta_\shuffle\left(\fraks\left(s+\sum_{\substack{j=1}}^r p_j\right)\sqcup \Big(\fraks(s_1-p_1)\shuffle\cdots\shuffle\fraks(s_{r}-p_{r})\Big)   \right)
   \end{align}
   where in each of the terms in the RHS we have set $p_i:=s_i$ and used the convention $\fraks(0):=\emptyset$.

\section{Applications to conical zeta values} \label{section:four}

\subsection{Conical zeta values}

Let us start by recalling some classical definition of cones \cite{Fulton93,Ziegler94}
\begin{defn} \label{defn:cones}
\begin{itemize}
 \item Let $v_1,\cdots,v_n$ by $n$ linearly independant nonzero vectors in $\Z^k$. The {\bf cone} associated to these vectors is
 \begin{equation*}
  C=\langle v_1,\cdots,v_n\rangle := \R^*_+v_1+\cdots\R^*_+v_n.
 \end{equation*}
 If furthermore $k=n$, the cone is called {\bf maximal}. We write $\calc$ the set of maximal cones. By convention, the empty set is a maximal cone.
 \item A {\bf decorated cone} is a pair $(C,\vec s)$ with $C=\langle v_1,\cdots,v_n\rangle$ a cone and $\vec s\in\N^n$. We call $\calc_\N$ the set of decorated maximal cones.
 \item  For a cone $C$ (resp. a decorated cone $(C,\vec s)$), write the vectors $v_i$s in the canonical basis $\mathcal{B}=\{e_1,\cdots,e_n\}$ of $\R^n$: $v_i=\sum_{j=1}^na_{ij}e_j$. Then $A_C:=(a_{ij})_{i,j=1,..,n}$ is the {\bf representing matrix} (in the basis $\mathcal B$) of the cone $C$ (resp. $(C,\vec s)$).
 \item A cone $C$ (resp. a decorated cone $(C,\vec s)$) is {\bf unimodular} if its representing matrix $A_C$ has only $0$s and $1$s in its entries.
\end{itemize}
\end{defn}
Unless specified otherwise, the cones considered in this paper will be maximal, therefore we write cones instead of maximal cones. 
\begin{rk}
 The definition above only covers open simplicial rational smooth cones. More general cones, or closed ones, will play no role here thus we do not introduce them. Similarly, in the case of decorated cones, one could have $\vec s\in\C^n$. Notice that a cone is invariant under permutations of the $v_i$s, while a decorated cone in general is only invariant under the simultaneous permutations of the $v_i$s and the components of $\vec s$.
\end{rk}
Conical zeta values (CZVs) were introduced in \cite{GPZ13} as  weighted sums on integers points on cones. A generalisation of these objects was introduced before in \cite{Te04}. In \cite{Ze17} a description of CZVs in terms of matrices was given. We adopt here an intermediate definition, where the cone is encoded by a matrix but not the weight. Our definition is rigorously equivalent to the ones in \cite{GPZ13,Te04,Ze17} but is more suitable for our purpose.
\begin{defn} \label{defn:useful_stuff_cone}
 \begin{itemize}
  \item Let $\mathcal{B}=\{e_1,\cdots,e_n\}$ be the canonical basis of $\R^n$ and $l:\R^n\mapsto\R$ be a linear map defined by $l(v)=\sum_{i=1}^n l_iv_i$, with $v_i$ the coordinates of the vector $v$ in the canonical base: $v=\sum_{i=1}^n v_ie_i$. Then we say that $l$ is (resp. strictly) {\bf positive (with respect to $\mathcal{B}$)}, and we write $l\geq0$\footnote{the more rigorous notation $l\geq_\mathcal{B}0$ is not necessary since we always work with the canonical basis of $\R^n$.} (resp $l>0$) if $l_i\geq0$ (resp. $l>0$) for any $i\in[n]$.
  
  \item Let $C=\langle v_1,\cdots,v_n\rangle$ (resp. $(C=\langle v_1,\cdots,v_n\rangle,\vec{s}$)) be a maximal cone (resp. decorated cone). Write the vectors $v_i$s in the canonical basis $\mathcal{B}$: $v_i=\sum_{j=1}^na_{ij}e_j$ and set $l_i:(\R)^n\longrightarrow\R$ the linear maps defined by $l_i(w)=\sum_{j=1}^na_{ij}w_j$. Then  
  $C$ (resp. $(C,\vec s)$) is (resp. strictly) {\bf positive (with respect to $\mathcal{B}$)} if $l_i\geq0$ (resp. $l_i>0$) for any $i\in[n]$.
  
  \item Let $(C=\langle v_1,\cdots,v_n\rangle,\vec{s}=(s_1,\cdots,s_n))$ be a decorated (maximal) strictly positive cone and $l_i:(\R)^n\longrightarrow\R$ the associated linear maps as before. Then the {\bf conical zeta value} associated to $(C,\vec s)$  is
 \begin{equation} \label{eq:def_CZV}
  \zeta(C,\vec s) := \sum_{\vec{n}\in(\N^*)^n}\frac{1}{l_1(\vec{n})^{s_1}\cdots l_n(\vec{n})^{s_n}}
 \end{equation}
 whenever the series converge.
 
 We also denote by $\zeta$ the map which to a cone $(C,\vec s)$ associates the CZV $\zeta(C,\vec s)$ when it exists.
 \end{itemize}
\end{defn}
\begin{rk}
 Notice that any unimodular cone is strictly positive with respect to the canonical basis. This justifies that we will not require our cones to be strictly positive since they will be unimodular.
\end{rk}
There are many important open questions concerning CZVs. 
An important one is the existence of linear relations with rational coefficients between CZVs. It was shown in \cite{GPZ13} that they obey a family of relations given by double subdivisions of cones which conjecturally generate all linear relations with rational coefficients between by CZVs. Another question is the number-theoretic content of CZVs. It was shown in \cite{Te04} that CZVs are evaluation of polylogarithms at $N$-th roots of unity. A conjecture by Dupont, refined by Panzer (and written in \cite{Ze17}, Conjecture 2\footnote{therefore, I suggest to call this conjecture the Dupont-Panzer-Zerbini conjecture, or DPB for short.}) states that for a cone $C$, $N$ is the least common multiplier of the minors of $A_C$. In this paper, we answer this second question in the case of tree-like cones.

\subsection{From trees to cones}

It is clear that tree zeta values are conical zeta values. Let us start by making this statement rigorous.

Let $F\in\calf$ be a  rooted forest with vertices $V(F)=\{1,\cdots,N\}$. We write $A_F=(a_{ij})_{i,j=1,\cdots,N}$ the $N\times N$ matrix\footnote{this matrix is sometimes called the path matrix of $F$.} defined by
\begin{align*}
 a_{ij} = \begin{cases}
           & 1 \quad\text{if }i\preceq j\\
           & 0 \quad\text{otherwise}.
          \end{cases}
\end{align*}
Furthermore, let us set 
\begin{equation*}
 v_i(F):=\sum_{j=1}^Na_{ij}e_j
\end{equation*}
(with $\{e_1,\cdots,e_N\}$ the canonical basis of $\R^N$).
\begin{lem}
 The above construction, extended by linearity, defines a map from rooted forest to cones.
\begin{align*}
 \Phi:\calf~&\longrightarrow~ \calc \\
 F~&\longmapsto \langle v_1(F),\cdots,v_N(F)\rangle.
\end{align*}
\end{lem}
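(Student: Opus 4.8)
The plan is to verify the two conditions that make the assignment $F \mapsto \langle v_1(F),\cdots,v_N(F)\rangle$ land in $\calc$, the set of maximal cones, and then to check that the construction is independent of the labelling of the vertices so that it descends to a well-defined map on isomorphism classes of rooted forests (and extends by linearity as claimed). The key observation is that if $V(F) = \{1,\cdots,N\}$ is numbered \emph{compatibly with the partial order} $\preceq$ (i.e. $i \preceq j \Rightarrow i \leq j$, which is always possible for a finite poset by taking a linear extension), then the path matrix $A_F = (a_{ij})$ is upper triangular with $1$'s on the diagonal (since $i \preceq i$ always). Hence $\det A_F = 1 \neq 0$, so the rows $v_1(F),\cdots,v_N(F)$ are linearly independent nonzero vectors in $\Z^N$, and since there are $N = \dim \R^N$ of them, the cone $\langle v_1(F),\cdots,v_N(F)\rangle$ is maximal. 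This is exactly the content of Definition \ref{defn:cones}, so $\Phi(F) \in \calc$. (One also notes immediately that $A_F$ has only $0$'s and $1$'s as entries, so the cone is in fact unimodular, which will matter later but is more than needed here.)

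Next I would address well-definedness on isomorphism classes. An isomorphism $f_V : V(F) \to V(F')$ of rooted forests is a poset isomorphism, so relabelling the vertices of $F$ by $f_V$ permutes the rows and columns of $A_F$ simultaneously by the permutation $\sigma$ induced by $f_V$; equivalently $A_{F'} = P_\sigma A_F P_\sigma^{-1}$ for the permutation matrix $P_\sigma$. This replaces the generating set $\{v_1(F),\cdots,v_N(F)\}$ by the set $\{v_{\sigma^{-1}(1)}(F)\,\text{permuted},\cdots\}$, i.e. the same set of vectors up to (a) permuting which vector is called $v_i$ and (b) permuting the coordinates by the same $\sigma$. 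A permutation of the generators leaves the cone unchanged by the remark following Definition \ref{defn:cones}, and a simultaneous permutation of coordinates is just expressing the cone in a relabelled canonical basis, which again does not change the cone as a subset of $\R^N$ up to the canonical identification. Hence $\Phi(F)$ depends only on the isomorphism class of $F$, and choosing a different $\preceq$-compatible labelling is a special case of this. Finally, $\calc$ (or rather its linear span) and $\calf$ are both vector spaces, so once $\Phi$ is defined on the basis of isomorphism classes of rooted forests it extends uniquely by linearity, giving the stated map $\Phi : \calf \to \calc$.

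The only genuinely delicate point — the ``main obstacle'' such as it is — is the bookkeeping in the well-definedness argument: one must be careful that the definition of $v_i(F)$ implicitly fixes a numbering of $V(F)$, and that changing the numbering changes \emph{both} the indexing of the $v_i$ and the coordinate vectors $e_j$ in a coherent way, so that no actual change to the cone results. Everything else is routine: the maximality is immediate from triangularity of $A_F$, and linearity is a formality. I would therefore structure the proof as: (1) fix a $\preceq$-compatible labelling, observe $A_F$ is unitriangular hence $\det A_F = 1$, conclude the $v_i(F)$ are $N$ linearly independent vectors in $\Z^N$ so $\Phi(F) \in \calc$ is maximal; (2) check invariance under relabelling / isomorphism using that poset isomorphisms act by simultaneous row–column permutations; (3) extend by linearity.
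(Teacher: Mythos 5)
Your proposal is correct, but it proves the key point (linear independence of the $v_i(F)$) by a different route than the paper. The paper argues by induction on $|V(F)|$, following the recursive structure of rooted forests: for a disjoint union $F=F_1F_2$ the two families of vectors live in orthogonal coordinate subspaces, and for $F=B_+(\tilde F)$ the root vector is the only one with a nonzero component along the coordinate attached to the root, so its coefficient in any vanishing linear combination must be zero and the induction hypothesis applies to $\tilde F$. You instead pick a labelling of $V(F)$ that is a linear extension of $\preceq$, observe that the path matrix $A_F$ is then unitriangular (reflexivity of $\preceq$ gives the $1$'s on the diagonal, compatibility of the labelling puts all other $1$'s above it), and conclude $\det A_F=1$, hence $N$ linearly independent vectors and a maximal (indeed unimodular) cone. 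Your argument is shorter and yields the extra information $\det A_F=1$ for free, while the paper's induction stays closer to the $B_+$/concatenation formalism it uses throughout. You also spell out invariance under poset isomorphisms (simultaneous row--column permutation of $A_F$), a bookkeeping point the paper leaves implicit; this is a welcome addition rather than a divergence, and the extension by linearity is a formality in both treatments.
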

\begin{proof}
 Recall that all our cones are maximal cones. 
 We need to prove that, for any $F\in\calf$, the vectors $v_i(F)$ are linearly independant. We prove this by induction on $N=|V(F)|$. If $N=0$, then $F=\emptyset$ and $\Phi(\emptyset)=\emptyset$ is a cone. The case $N=1$ also trivially holds.
 
 Assume the result holds for all forests with $k\leq N$ vertices and let $F$ be a forest with $N+1$ leaves. Without loss of generality we can assume $V(F)=[N+1]$. We have two cases to consider: First, if $F=F_1F_2$ with $F_1$ and $F_2$ non empty, the result holds by the induction hypothesis used on $F_1$ and $F_2$ and the fact that the $v_i(F_1)$ and the $v_j(F_2)$ belong into two orthogonal subspaces of $\R^{N+1}$. 
 
 Second, if $F=T=B_+(\tilde F)$, we can assume without loss of generality that $N+1$ is the root of $T$. Then we have $v_{N+1}(T)=\sum_{k=1}^{N+1}e_k$ and $v_i(T)=v_i(\tilde F)$ for any $i\in[N]$. Thus 
 \begin{equation*}
  \sum_{k=1}^{N+1}\lambda_k v_k(T) = 0 ~\Longleftrightarrow~\lambda_{N+1}=0~\wedge~\sum_{k=1}^{N}\lambda_k v_k(\tilde F)=0
 \end{equation*}
 since $v_{N+1}(T)$ is the only vector with a non-zero $e_{N+1}$ component. The result then holds from the induction hypothesis used on $\tilde F$, which conclude the proof.
\end{proof}
\begin{rk}
 There is of course other ways to maps rooted forests to cones. We simply present the one that respects the conical and arborified zeta maps, as we explain below.
\end{rk}
The map $\Phi$ can be lifted to a map $\overline{\Phi}$ acting on decorated forests. 
 Let $\overline{\Phi}:\calf_{\N}\mapsto\calc_\N$ be the map defined by
 $\overline{\Phi}(F,d_F):=(\Phi(F),\vec{s}_F)$
 for any decorated forest $(F,d_F)\in\calf_{\N}$ with vertices $V(F)=\{1,\cdots,N\}$ and where we have set $\vec s_F:=(d_F(1),\cdots,d_F(N))$.
 
 This maps $\Phi$ and $\overline{\Phi}$  are not surjective. This justifies the following definition:
\begin{defn}
 A cone (resp. a decorated cone) is said to be a {\bf tree-like cone} (resp. a {\bf decorated tree-like cone}) when it lies in the image of $\Phi:\calf\longrightarrow\calc$ (resp. $\overline{\Phi}:\calf_{\N}\mapsto\calc_\N$). We write $\calct$ the set of tree-like cones.
 
 Furthermore, if a decorated cone lies in $\overline{\Phi}(\calf_{\N^*}^{\rm conv})$ it is called a {\bf convergent decorated tree-like cone}. We write $\calct_\N$ the set of decorated tree-like cone and $\calct_\N^{\rm conv}$ the set of convergent decorated tree-like cone.
\end{defn}
Let us recall before (Definition \ref{def:conv_forest_series}) that a $\N^*$-decorated forest is convergent if the decorations of each of its roots are greater or equal to 2.
The following simple properties of the map $\overline{\Phi}$ are a key result.
\begin{prop} \label{prop:forest_cones}
 For any non-empty convergent forest $(F,d_F)\in\calf_{\N^*}^{\rm conv}$, the conical zeta values $\zeta(\overline{\Phi}(F,d_F))$ is convergent and 
 \begin{equation*}
  \zeta(\overline{\Phi}(F,d_F)) = \zeta^t(F,d_F).
 \end{equation*}
\end{prop}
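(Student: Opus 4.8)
The plan is to unwind both sides of the claimed identity and observe that they are literally the same sum. First I would recall the definition of the conical zeta value attached to $\overline{\Phi}(F,d_F) = (\Phi(F),\vec s_F)$: by construction the linear form $l_i$ associated to the $i$-th generator $v_i(F) = \sum_j a_{ij}e_j$ of the cone is $l_i(\vec n) = \sum_{j} a_{ij} n_j = \sum_{j \succeq i} n_j$, where the sum ranges over vertices $j$ of $F$ with $i \preceq j$ (using the path matrix $A_F$). Since $\vec s_F = (d_F(1),\dots,d_F(N))$, Definition \ref{defn:useful_stuff_cone} gives
\[
 \zeta(\overline{\Phi}(F,d_F)) = \sum_{\vec n \in (\N^*)^N} \prod_{i\in V(F)} \left(\sum_{j\succeq i} n_j\right)^{-d_F(i)},
\]
whenever the series converges. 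On the other hand, the tree zeta value from Definition \ref{defn:tree_zeta_values} is exactly
\[
 \zeta^t(F,d_F) = \sum_{\substack{n_v\geq1\\ v\in V(F)}} \prod_{v\in V(F)} \left(\sum_{\substack{v'\in V(F)\\ v'\succeq v}} n_{v'}\right)^{-\alpha_v}
\]
with $\alpha_v = d_F(v)$. These two expressions coincide term by term, so the only thing that needs genuine work is convergence.

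For convergence I would invoke Proposition \ref{prop:crucial}, which already asserts that $\zeta^t(F,d_F)$ converges for any convergent $\N^*$-decorated rooted forest $F$ (this in turn rests on Theorem \ref{thm:integral_sum} and Lemma \ref{lem:bin_map_properties}, identifying $\zeta^t(F)$ with the absolutely convergent integral $\zeta(\fraks^T(F))$). Since all summands are positive, convergence of $\zeta^t(F,d_F)$ as an iterated/unordered sum of nonnegative terms is unambiguous, and it forces the conical series $\zeta(\overline{\Phi}(F,d_F))$ — being the identical sum of the identical nonnegative terms — to converge to the same value. Thus $\zeta(\overline{\Phi}(F,d_F)) = \zeta^t(F,d_F)$.

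The one step requiring a small argument is the computation of the linear forms $l_i$ attached to the cone $\Phi(F)$: one must check that the representing matrix of $\Phi(F)$ in the canonical basis is precisely the path matrix $A_F$, so that $l_i(\vec n) = \sum_{j\succeq i} n_j$. This is immediate from the definition of $v_i(F) = \sum_j a_{ij} e_j$ together with the recipe in Definition \ref{defn:useful_stuff_cone} for reading off $l_i$ from the rows of the representing matrix; I would also note that $A_F$ has only $0$s and $1$s, so $\Phi(F)$ is unimodular, hence positive with respect to the canonical basis, which is what licenses applying the CZV definition in the first place. I expect the main (and essentially only) obstacle to be purely bookkeeping: making sure the indexing conventions — in particular the direction of the partial order $\preceq$ and the fact that $a_{ij}=1 \Leftrightarrow i\preceq j$ — line up so that the exponent $-d_F(i)$ sits on the factor $\big(\sum_{j\succeq i} n_j\big)$ and not on its "transpose". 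Once that is pinned down the proof is a one-line matching of two sums.
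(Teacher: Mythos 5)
Your proposal matches the paper's proof essentially step for step: identify $l_i(\vec n)=\sum_{j\succeq i}n_j$ from the path matrix of $F$, observe that the CZV series and the tree zeta series are termwise identical, and then import convergence from Proposition \ref{prop:crucial}. The additional remarks you make (unimodularity of $A_F$ guaranteeing positivity of the cone, and care with the direction of $\preceq$) are correct and harmless padding around the same argument.
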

\begin{proof}
 Let $(F,d_F)$ be any convergent forest. Up to relabelling, we can identify its vertices set $V(F)$ with $[N]$ for some $N\in\N$: $V(F)=\{1,\cdots,N\}$. First let us  observe that, for any $\vec n=(n_1,\cdots,n_N)\in\N^N$ and $i\in V(F)$ we have 
 \begin{equation*}
  l_i(F)(\vec n) = \sum_{\substack{j\in[N]\\j\succeq i}}n_j
 \end{equation*}
 by definition of $l_i(F)$, and where $\succeq$ is the partial order of the set vertices $V(F)$ of $(F,d_F)$. Then Equation \eqref{eq:TZV} applied to the convergent rooted forest $(F,d_F)$ gives
 \begin{equation*}
  \zeta^t(F,d_F) = \sum_{\vec n\in\N^N}\prod_{i=1}^N\left(\sum_{\substack{j\in[N]\\j\succeq i}}n_j\right)^{-d_F(i)} = \sum_{\vec n\in\N^N} \frac{1}{l_1(F)(\vec n)^{d_F(1)}\cdots l_N(F)(\vec n)^{d_F(N)}} = \zeta(\overline{\Phi}(F),\vec s_F)
 \end{equation*}
 as claimed in the Proposition. The convergence of $\zeta(\overline{\Phi}(F),\vec s_F)$ then follows from the first point of Proposition \ref{prop:crucial}.
\end{proof}
\begin{rk}
 This implies in particular that shuffle AZVs are Shintani zetas. As such, if we take the decorations of $F$ to be complex parameters, the function $\vec s\mapsto\zeta^t(F)$ admits a meromorphic continuation to $\C^{|V(F)|}$ \cite{Ma03,Lo21}. The questions of analytic continuation of tree zeta values and their renormalisation lie well beyond the scope of this article, nonetheless Proposition \ref{prop:forest_cones} answers these questions, at least partially.
\end{rk}
Proposition \ref{prop:forest_cones} allows to effortlessly show the following important result.
\begin{thm} \label{thm:tree_CZVs_MZVs}
 For any convergent decorated tree-like cone $(C,\vec s)=\overline{\Phi}(F,d_F)$, the associated conical zeta values $\zeta(C,\vec s)$ is a linear combinations of MZVs with rational coefficients given by
 \begin{equation*}
  \zeta(C,\vec s) = (\zeta_\shuffle\circ fl\circ\fraks^T)(F,d_F).
 \end{equation*}
\end{thm}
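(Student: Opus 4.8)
The plan is to chain together the results already established in the excerpt, since Theorem~\ref{thm:tree_CZVs_MZVs} is essentially a corollary of Proposition~\ref{prop:forest_cones} and Theorem~\ref{thm:tree_zeta_MZVs}. First I would unwind the hypothesis: by definition a convergent decorated tree-like cone $(C,\vec s)$ is exactly one of the form $\overline{\Phi}(F,d_F)$ for some convergent $\N^*$-decorated rooted forest $(F,d_F)\in\calf_{\N^*}^{\rm conv}$, and this $(F,d_F)$ is unique since $\overline{\Phi}$ is injective. So it suffices to prove the displayed formula for this particular preimage.

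Next I would apply Proposition~\ref{prop:forest_cones}, which tells us that $\zeta(\overline{\Phi}(F,d_F))$ is convergent and equals the tree zeta value $\zeta^t(F,d_F)$. Then I would invoke the second point of Theorem~\ref{thm:tree_zeta_MZVs}, which says precisely that for any convergent $\N^*$-decorated rooted forest, $\zeta^t(F,d_F) = (\zeta_\shuffle\circ fl\circ\fraks^T)(F,d_F)$, and in particular that this is a $\Q$-linear combination of MZVs. Stringing these two equalities together gives
\begin{equation*}
 \zeta(C,\vec s) = \zeta(\overline{\Phi}(F,d_F)) = \zeta^t(F,d_F) = (\zeta_\shuffle\circ fl\circ\fraks^T)(F,d_F),
\end{equation*}
which is exactly the claimed identity, and the rationality of the coefficients is inherited from Theorem~\ref{thm:tree_zeta_MZVs}.

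There is essentially no obstacle here: the proof is a two-line composition of cited results, so the only thing to be careful about is bookkeeping — making sure the convergence hypothesis on the cone translates correctly into the convergence hypothesis ``$(F,d_F)\in\calf_{\N^*}^{\rm conv}$'' needed to apply both Proposition~\ref{prop:forest_cones} and Theorem~\ref{thm:tree_zeta_MZVs}, and noting that the empty forest case is excluded or handled trivially. If anything, the ``hard part'' is purely expository: stating clearly that the map $\overline{\Phi}$ restricted to $\calf_{\N^*}^{\rm conv}$ is a bijection onto $\calct_\N^{\rm conv}$, so that the preimage $(F,d_F)$ is well-defined and the formula is unambiguous.
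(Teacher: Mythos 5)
Your proof is correct and follows exactly the same route as the paper's: compose Proposition~\ref{prop:forest_cones} (giving $\zeta(\overline{\Phi}(F,d_F))=\zeta^t(F,d_F)$) with Theorem~\ref{thm:tree_zeta_MZVs} (giving $\zeta^t(F,d_F)=(\zeta_\shuffle\circ fl\circ\fraks^T)(F,d_F)$). The extra remarks on injectivity of $\overline{\Phi}$ and the empty-forest edge case are harmless bookkeeping that the paper leaves implicit.
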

\begin{proof}
 For any convergent decorated cone $(C,\vec s)=\overline{\Phi}(F,d_F)$ the result follows from Proposition \ref{prop:forest_cones} and Theorem \ref{thm:tree_zeta_MZVs} applied to $\zeta^t(F,d_F)$.
\end{proof}
Theorems \ref{thm:tree_zeta_MZVs} and \ref{thm:tree_CZVs_MZVs}, together with Proposition \ref{prop:forest_cones} and the results of \cite{Cl20} can be summarised as the commutativity of Figure \ref{fig:MZV_AZV} (where the CZVs, AZVs and MZVs maps are all written $\zeta$).
\begin{figure}[h!] 
  		\begin{center}
  			\begin{tikzpicture}[->,>=stealth',shorten >=1pt,auto,node distance=3cm,thick]
  			\tikzstyle{arrow}=[->]
  			
  			\node (1) {$\calf_{\N^*}^{\rm conv}$};
  			\node (2) [right of=1] {$\calf_{\{x,y\}}^{\rm conv}$};
  			\node (3) [right of=2] {$\calw_{\{x,y\}}^{\rm conv}$};
  			\node (4) [below of=1] {$\calct_{\N}^{\rm conv}$};
  			\node (5) [right of=4] {$\R$};

  			\path
  			(1) edge node [above] {$\fraks^T$} (2)
  			(2) edge node [above] {$fl$} (3)
  			(1) edge node [left] {$\overline{\Phi}$} (4)
  			(4) edge node [above] {$\zeta$} (5)
  			(1) edge node [above right] {$\zeta^t$} (5)
  			(2) edge node [right] {$\zeta_\shuffle^T$} (5)
  			(3) edge node [below right] {$\zeta_\shuffle$} (5);
  			\end{tikzpicture}
  			\caption{CZVs, AZVs, MZVs and tree zeta values.}\label{fig:MZV_AZV}
  		\end{center}
  	\end{figure} \\
Theorem \ref{thm:tree_CZVs_MZVs} answer one of the important question about CZVs for the convergent tree-like cones. Therefore it is useful to be able to characterise which cones are (convergent) tree-like. The next Subsection is dedicated to this question.
\begin{rk}
 In \cite[Lemma 1]{Ze16}, another sufficient condition was given for a unimodular CVZs to be a linear combinations of MZVs with rational coefficients. This condition, called C1 in \cite{Ze17}, the rows of the representing matrix $A_C$ can be permuted such that the 1s of $A_C$ are consecutive in each column. It is easy to see with counter-example that this condition has no link with being tree-like.
 
 The dual condition, let us call it C2, is that the columns of $A_C$ can be permuted so that the 1s of $A_C$ are consecutive in each column. To the best of the author's knowledge, it is still a conjecture (proposed by F. Zerbini in \cite{Ze17}) that C2 is another sufficient (but not necessary) condition for unimodular CZVs to be a linear combinations of MZVs with rational coefficients. It is easy to show that being tree-like implies the C2 conditon, but that the converse does not hold. Thus, Theorem \ref{thm:tree_CZVs_MZVs} is an indication that Zerbini's conjecture holds.
\end{rk}

\subsection{Characterisation of tree-like cones}

For conical zeta values, each lines of the representing matrix gives one term of the denominator of Equation \eqref{eq:def_CZV}. In Equation \eqref{eq:TZV} such a term corresponds to a vertex. Notice that, in this Equation, if $v_1\preceq v_2$, the term in the series associated to $v_1$ is
\begin{equation*}
 l_{v_1}(\vec n) = \sum_{\substack{v'\succeq v_1\\v'\not\succeq v_2}}n_{v'} + \sum_{v'\succeq v_2}n_{v'}
\end{equation*}
and that the second term in the RHS is the term in Equation \eqref{eq:TZV} associated to $v_2$. This justifies the following definition.
\begin{defn}
 For any $n$-dimensional cone $C$ (resp. decorated cone $(C,\vec s)$), let $\preceq_C$ be the relation on $[n]$ defined by
 \begin{equation*}
  i\preceq_C j ~:\Longleftrightarrow~ l_i-l_j\geq0
 \end{equation*}
 with the linear maps $l_i$ and the notion of positive linear maps of Definition \ref{defn:useful_stuff_cone}. As before, we write $\succeq_C$ the inverse relation.
\end{defn}
\begin{lem} \label{lem:trivial_poset}
 For any $n$-dimensional cone $C$ (resp. decorated cone $(C,\vec s)$), $([n],\preceq_C)$ is a poset.
\end{lem}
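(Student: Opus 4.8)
The claim is that $\preceq_C$ is a partial order on $[n]$, so I need to check reflexivity, transitivity, and antisymmetry. Reflexivity is immediate: $l_i - l_i = 0 \geq 0$, so $i \preceq_C i$. Transitivity is also easy and purely formal: if $l_i - l_j \geq 0$ and $l_j - l_k \geq 0$, then since a sum of two coordinatewise-nonnegative linear maps is coordinatewise nonnegative, $l_i - l_k = (l_i - l_j) + (l_j - l_k) \geq 0$, hence $i \preceq_C k$. So the only point requiring actual work is antisymmetry.

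For antisymmetry, suppose $i \preceq_C j$ and $j \preceq_C i$, i.e. $l_i - l_j \geq 0$ and $l_j - l_i \geq 0$. Adding these gives that every coordinate of $l_i - l_j$ is both $\geq 0$ and $\leq 0$, hence $l_i = l_j$ as linear maps. The task is then to deduce $i = j$, and here is where the hypothesis that $C$ is a genuine cone enters: the vectors $v_1, \dots, v_n$ are linearly independent by Definition \ref{defn:cones}. The linear map $l_k$ is by construction the one whose coefficient vector (in the canonical basis) is the $k$-th row of the representing matrix $A_C$, namely $v_k$ itself (recall $v_k = \sum_j a_{kj} e_j$ and $l_k(w) = \sum_j a_{kj} w_j$). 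So $l_i = l_j$ forces $v_i = v_j$; if $i \neq j$ this contradicts linear independence of $\{v_1, \dots, v_n\}$. Therefore $i = j$, establishing antisymmetry.

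The one subtlety worth flagging — and the step I expect a careful reader to linger on — is that ``$l_i - l_j \geq 0$ and $l_j - l_i \geq 0$ implies $l_i = l_j$'' relies on positivity being tested coordinatewise in the \emph{fixed} canonical basis $\mathcal{B}$; this is exactly how Definition \ref{defn:useful_stuff_cone} sets things up, so there is no ambiguity, but it is the place where the definition of $\geq 0$ is actually used rather than just invoked formally. Everything else is bookkeeping. I would write the proof in three short clearly-labelled parts (reflexivity, transitivity, antisymmetry), spending essentially all the ink on the last one and, within it, on the reduction $l_i = l_j \Rightarrow v_i = v_j \Rightarrow i = j$ via linear independence of the defining vectors of the cone.
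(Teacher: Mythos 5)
Your proof is correct and follows essentially the same route as the paper: reflexivity and transitivity are formal, and antisymmetry comes from $l_i=l_j$ forcing $i=j$ because the defining vectors (rows of $A_C$) of a maximal cone are linearly independent, hence pairwise distinct. Your write-up merely spells out in more detail what the paper's proof states briefly.
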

\begin{proof}
 Reflexivity and transitivity are trivial. Anti-symmetry follows from the fact that $C$ is a maximal cone, so two different lines of the representing matrix $A_C$ have to be different. Thus $l_i=l_j$ implies $i=j$ for any $i$ and $j$ in $[n]$.
\end{proof}
\begin{rk} \label{rk:connected_components}
 Using permutations of lines and columns, one can assume that the representing matrix is block-diagonal: $A_C={\rm diag}(A_1^C,\cdots,A_p^C)$ with $p\geq1$. Then the poset $([n],\preceq_C)$ has $p$ connected components.
\end{rk}
Thus we have defined a map 
\begin{align} \label{defn_Psi}
 \Psi:\mathcal{C} & \longrightarrow \mathcal{P}^{\rm fin} \\
 C & \longmapsto ([n],\preceq_C) \nonumber
\end{align}
where $\mathcal{P}^{\rm fin}$ is the set of finite posets. We can lift $\Psi$ to a map $\overline\Psi :\calc_\N\longmapsto\mathcal{P}^{\rm fin}_\N$ from decorated cones to decorated posets. Set $\overline\Psi(C,\vec s):=(\Psi(C),d_C)$, with $d_C:[n]\mapsto\N$ defined by $d_C(i):=s_i$.

Since not every cone is a tree-like cone, and more generally, not every conical sums is indexed by a poset, we need a compatibility condition on the cone to ensure that its associated conical sum respect the poset structure associated to the cone. One finds out the right condition by observing that in \eqref{eq:TZV}, if $v'\succeq v$, then the term coming from $v$ contains $n_{v'}$. 
\begin{defn}
 A $n$-dimensional cone $C$ (resp. decorated cone $(C,\vec s)$) with representing matrix $A_C=(a_{ij})_{i,j=1,\cdots,n}$ is 
 {\bf poset compatible} if, for any $i,j\in[n]$ we have
 \begin{equation*}
  a_{ij} \neq0~\Longrightarrow~i\preceq_C j.
 \end{equation*}
\end{defn}
So any $n$-dimensional poset compatible cone gives a conical zeta value whose sum is given by topological ordering of the poset $([n],\preceq_C)$. To check that this poset is a rooted forest, we need to introduce one more object. 
\begin{defn}
 For any $n$-dimensional cone $C$ (resp. decorated cone $(C,\vec s)$), its {\bf second representing matrix} $B_C:=(b_{ij})_{i,j=1,..,n}$ is the incidence matrix of the Hasse diagram of $([n],\preceq_C)$.
 
 In other words, $b_{ij}=1$ when $j$ is a direct successor of $i$ and $0$ otherwise:
 \begin{equation*}
  b_{ij}=1~\Longleftrightarrow~i\preceq_C j~\wedge~(\forall k\in[n],~i\preceq_Ck\preceq_Cj\Rightarrow k\in\{i,j\}).
 \end{equation*}
\end{defn}
We then have 
\begin{lem} \label{lem:carac_tree_cone}
 For a $n$-dimensional cone $C$ (resp. decorated cone $(C,\vec s)$), the poset $\Psi(C)=([n],\preceq_C)$ is a rooted forest if, and only if, its second representing matrix has at most one non-zero component per column.
\end{lem}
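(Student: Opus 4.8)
The plan is to reduce the statement to a purely combinatorial fact about the Hasse diagram of $([n],\preceq_C)$ and then prove that fact by a ``walk around a cycle'' argument. Write $H=([n],E)$ for that Hasse diagram, so that $(i,j)\in E$ precisely when $b_{ij}=1$, i.e.\ when $j$ is a direct successor of $i$. Since the reachability relation of the Hasse diagram of a poset is the original order, $H$ is a DAG with $(V(H),\preceq)=([n],\preceq_C)$; and, as in Remark \ref{rk:connected_components}, the phrase ``$([n],\preceq_C)$ is a rooted forest'' is to be read as ``$H$ is a rooted forest''. Finally, a column $j$ of $B_C$ has at most one nonzero entry if and only if $j$ has at most one direct predecessor in $H$, i.e.\ the in-degree of $j$ in $H$ is $\le 1$. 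So it suffices to prove: $H$ is a rooted forest if and only if every vertex of $H$ has in-degree at most $1$.

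For the forward implication, I would suppose $H$ is a rooted forest and fix $j\in[n]$. If $j$ is a root, its in-degree is $0$; otherwise let $r$ be the root of the connected component of $j$, so that $r$ is the unique minimal element of that component and hence $r\preceq_C j$. If $j$ had two distinct direct predecessors $i_1\ne i_2$, then concatenating a directed path from $r$ to $i_1$ with the edge $(i_1,j)$, and likewise for $i_2$, yields two directed paths from $r$ to $j$; these are simple, because a directed path in a DAG visits strictly $\preceq_C$-increasing vertices, and they are distinct since their penultimate vertices $i_1$ and $i_2$ differ, contradicting the uniqueness of the simple path between two vertices of an undirected tree. Hence every vertex of $H$ has in-degree $\le 1$.

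For the converse, I would assume every vertex of $H$ has in-degree $\le 1$. The main technical point — the step I expect to be the real obstacle — is the following orientation-switch observation: along any simple path or simple cycle in the underlying undirected graph of $H$ whose first edge is oriented away from its first endpoint and whose last edge is oriented away from its last endpoint, there must be an internal vertex $w$ both of whose incident path-edges point into $w$; its two path-neighbours are then distinct in-neighbours of $w$, contradicting the in-degree bound. Applied to an undirected cycle (reading orientations cyclically, not all edges can be coherently oriented, else $H$ would contain a directed cycle, impossible in a DAG), this shows $H$ has no unoriented loop and is therefore a forest. Then each connected component $K$ of $H$ is an undirected tree, so it has at least one source (it is a finite DAG); and if $K$ had two sources $m_1\ne m_2$, applying the observation to the unique undirected path joining them — its first edge points out of $m_1$ and its last out of $m_2$, since both are sources — again produces a vertex of in-degree $\ge 2$. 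Thus $K$ has a unique minimal element, $H$ is a rooted forest, and the lemma follows; throughout, the only non-routine ingredient is the orientation-switch observation, everything else being bookkeeping with the poset axioms and the basic properties of trees.
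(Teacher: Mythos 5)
Your proof is correct and follows essentially the same route as the paper's: both reduce the column condition on $B_C$ to the statement that every vertex of the Hasse diagram has at most one direct predecessor, and then identify that condition with being a rooted forest. The only difference is one of detail: where the paper simply asserts (after disposing of oriented cycles via Lemma~\ref{lem:trivial_poset}) that ``no unoriented cycle'' and ``a unique minimal element per component'' together are equivalent to the in-degree bound, you actually prove that equivalence via the orientation-switch argument, which is a legitimate and complete way to fill in the step the paper leaves to the reader.
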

\begin{proof}
 An oriented graph is a rooted forest if, and only if it has no oriented cycle, no non-oriented cycle and each of its connected components has exactly one minimal element.
 
 The first point is guarantied by Lemma \ref{lem:trivial_poset}. The second and third points are equivalent to asking that each vertex has at most one direct ancestor for the relation $\preceq_C$. Since $j$ is a direct ancestor of $i$ if, and only if, $b_{ij}=1$, we have that $j$ has at most one direct ancestor if, and only if, it exists at most one $i\in[n]$ such that $b_{ij}=1$, thus that the $j$-th column of $B_C$ has at most one non-zero entry. Since this must hold for all $j\in[n]$, we have the result
\end{proof}
\begin{rk}
 For a cone $C$ as in Lemma \ref{lem:carac_tree_cone}, the forest $\Psi(C)$ has $p$ connected components (i.e. trees) if, and only if $A_C$, the representing matrix of $C$, is a block-diagonal matrix with exactly $p$ blocks (modulo permutations of lines and columns).
\end{rk}
Lemma \ref{lem:carac_tree_cone} actually gives a characterisation of tree-like cones. More precisely we have
\begin{prop} \label{prop:cone_to_tree}
 A unimodular cone $C$ (resp. a decorated unimodular cone $(C,\vec s)$) is a tree-like cone if, and only if, it is poset compatible and its second representing matrix has at most one non-zero entry in each column. Furthermore, the map $\Psi$ (resp. $\overline{\Psi}$) restricted to $\calct$ (resp. $\calct_\N$) is the inverse of the map $\Phi$ (resp. $\overline{\Phi}$).
\end{prop}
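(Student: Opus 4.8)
The plan is to identify, for a unimodular cone $C$, the poset $\Psi(C)=([n],\preceq_C)$ with the poset underlying a candidate rooted forest, and then to compare the representing matrix $A_C=(a_{ij})$ with the path matrix of that forest; the only step that is not pure bookkeeping is an application of the linear independence of the rows of $A_C$.

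First I would treat the direction ``tree-like $\Rightarrow$ the two conditions'' and, along the way, the identity $\Psi\circ\Phi=\mathrm{id}$. If $C=\Phi(F)$ then $A_C$ is the path matrix of $F$, so the $i$-th row $v_i$ of $A_C$ has support $\{k:i\preceq_F k\}$. Since the entries of $A_C$ lie in $\{0,1\}$, the condition $l_i-l_j\geq0$ is equivalent to $\mathrm{supp}(v_j)\subseteq\mathrm{supp}(v_i)$, hence to $i\preceq_F j$; therefore $\preceq_C$ coincides with $\preceq_F$ and $\Psi(\Phi(F))=F$ (and $\overline\Psi(\overline\Phi(F,d_F))=(F,d_F)$, since the decoration attached to vertex $i$ is $s_i=d_F(i)$ on both sides). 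Poset compatibility is now immediate, $a_{ij}=1\Leftrightarrow i\preceq_F j\Leftrightarrow i\preceq_C j$, and the second representing matrix $B_C$ is the incidence matrix of the Hasse diagram of the rooted forest $F$, so each vertex has at most one direct ancestor and each column of $B_C$ has at most one nonzero entry. Unimodularity is clear, a path matrix having only $0$s and $1$s.

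For the converse, let $C$ be unimodular, poset compatible, with $B_C$ having at most one nonzero entry per column. By Lemma \ref{lem:carac_tree_cone} the poset $F:=\Psi(C)=([n],\preceq_C)$ is a rooted forest, and it remains to show $\Phi(F)=C$, i.e. that $A_C$ is the path matrix of $F$. The crucial point, the one place requiring an actual argument, is that $a_{ii}=1$ for every $i$: if $a_{jj}=0$ then $S:=\mathrm{supp}(v_j)$ is a nonempty subset of $[n]$ with $j\notin S$, and poset compatibility forces $j\preceq_C k$, hence $\mathrm{supp}(v_k)\subseteq S$, for every $k\in S$; consequently the $|S|+1$ distinct rows $v_j$ and $(v_k)_{k\in S}$ all lie in the $|S|$-dimensional coordinate subspace $\mathrm{span}\{e_l:l\in S\}$, contradicting the linear independence of the rows of $A_C$. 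Granting $a_{ii}=1$ for all $i$: poset compatibility gives $a_{ij}=1\Rightarrow i\preceq_C j$, while conversely $i\preceq_C j$ means $\mathrm{supp}(v_j)\subseteq\mathrm{supp}(v_i)$, which together with $j\in\mathrm{supp}(v_j)$ yields $a_{ij}=1$. Hence $a_{ij}=1\Leftrightarrow i\preceq_C j\Leftrightarrow i\preceq_F j$, so $A_C$ is the path matrix of $F$ and $C=\Phi(F)$ is tree-like; in the decorated case $\overline\Phi(F,d_C)=(C,\vec s)$ by the very definition of $\overline\Psi$, so $\overline\Phi\circ\overline\Psi=\mathrm{id}$ on $\calct_\N$. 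Putting the two directions together proves the claimed equivalence, and $\Psi|_{\calct}\circ\Phi=\mathrm{id}_{\calf}$ together with the surjectivity of $\Phi$ onto $\calct$ shows that $\Psi|_{\calct}$ is the two-sided inverse of $\Phi$ (and likewise $\overline\Psi|_{\calct_\N}=\overline\Phi^{-1}$). The main obstacle is precisely the diagonal argument above; the rest is a routine dictionary between the order $\preceq_C$, supports of rows of $A_C$, and path matrices of rooted forests.
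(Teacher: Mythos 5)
Your proof is correct and follows essentially the same route as the paper: identify $\preceq_C$ with the forest order so that $\Psi\circ\Phi=\mathrm{id}$, deduce the two conditions from Lemma \ref{lem:carac_tree_cone} in the forward direction, and prove the converse by checking $\Phi(\Psi(C))=C$. The only point where you go beyond the paper's write-up is the verification that $a_{jj}=1$ for every $j$ (via poset compatibility plus the linear independence of the generators), which is precisely what is needed to conclude that $A_C$ is the path matrix of $\Psi(C)$; the paper compresses this into ``using the same argument as in the first point'', so your diagonal argument fills in that implicit step rather than taking a different approach.
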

\begin{proof}
 The maps $\Phi$ and $\overline{\Phi}$ are clearly injective since $F$ is an isomorphism class of rooted forests.

 Let $C$ be any tree-like cone and $F$ its preimage under $\Phi$. Since $F$ is an isomorphism class, we can assume without loss of generality that $V(F)=[n]$. Then by construction of $\Phi$ and $\preceq_C$, the later is the partial order relation on $[n]=V(F)$ that defines the forest structure of $F$. Thus $B_C$ is the adjacency matrix of $F$ and since $\Psi(C)$ is by definition the poset whose adjacency matrix is $B_C$ we obtain $\Psi|_{\calct}=\Phi^{-1}$ as claimed.
 
 We have also shown that, if $C$ is a tree-like cone, then $\Psi(C)$ is a rooted forest. Thus, by Lemma \ref{lem:carac_tree_cone}, $C$ is poset compatible and $B_C$ has at most one non-zero entry in each column. We thus have that each tree-like cone satisfies the hypothesis of Lemma \ref{lem:carac_tree_cone}.
 
 The sole thing left to be shown is the fact that, if a cone obeys the hypothesis of Lemma \ref{lem:carac_tree_cone}, namely that its second representing matrix has at most one non-zero component per column, then it is a tree-like cone. By Lemma \ref{lem:carac_tree_cone} for such a cone $C$, $\Psi(C)\in\calf$. Using the same argument as in the first point of this proof, we have that $\Phi(\Psi(C))=C$. Thus $C\in\calct$ by definition of tree-like cones.
 
 The same results hold with the lifted maps $\overline\Phi$ and $\overline\Psi$ by definition of the lifts.
\end{proof}
Summing up the results of the last two sections in one statement we obtain
\begin{thm} \label{thm:main_result}
 Let $\calc=(C,\vec s)$ be a decorated unimodular cone such that its second representing matrix has at most one non-zero component per column. Then, provided that $\overline\Psi(\calc)$ is a convergent forest, $\zeta(C,\vec s)$ converges and is a linear combination of MZVs of weights $||\vec s||:=s_1\cdots+s_n$ with rational coefficients. They evaluate as:
 \begin{equation*}
  \zeta(C,\vec s) = (\zeta_\shuffle\circ fl_0\circ \fraks^T\circ\overline\Psi)(C,\vec s).
 \end{equation*}
\end{thm}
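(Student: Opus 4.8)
The plan is to chain together Proposition~\ref{prop:cone_to_tree} and Theorem~\ref{thm:tree_CZVs_MZVs}, the only real work being to translate the hypotheses on the cone into the statement that $\overline{\Psi}(\calc)$ is a convergent decorated tree-like cone lying in the image of $\overline{\Phi}$, and then to keep track of the weight.

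First I would observe that since $C$ is unimodular and satisfies the hypotheses of Lemma~\ref{lem:carac_tree_cone} (poset compatibility and at most one non-zero entry in each column of $B_C$), Proposition~\ref{prop:cone_to_tree} applies: $\calc=(C,\vec s)$ is a decorated tree-like cone, and $\overline{\Psi}$ restricted to $\calct_\N$ is the inverse of $\overline{\Phi}$. Writing $(F,d_F):=\overline{\Psi}(\calc)$, this means $(F,d_F)\in\calf_\N$ and $\overline{\Phi}(F,d_F)=\calc$. By assumption $(F,d_F)$ is a convergent forest; recalling Definition~\ref{def:conv_forest_series} this means $(F,d_F)\in\calf_{\N^*}^{\rm conv}$ --- in particular all its decorations are positive integers, so $\fraks^T(F,d_F)$ is well-defined --- and hence $\calc=\overline{\Phi}(F,d_F)$ is a convergent decorated tree-like cone in the sense required by Theorem~\ref{thm:tree_CZVs_MZVs}.

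Next I would simply invoke Theorem~\ref{thm:tree_CZVs_MZVs}: it gives that $\zeta(C,\vec s)$ converges (equivalently, apply the convergence part of Proposition~\ref{prop:forest_cones}) and equals $(\zeta_\shuffle\circ fl\circ\fraks^T)(F,d_F)$. Since $(F,d_F)=\overline{\Psi}(\calc)$, this is exactly $(\zeta\circ fl\circ\fraks^T\circ\overline{\Psi})(\calc)$, the claimed formula. It remains to identify the weight. Here I would trace the number of letters: by the recursion defining $\fraks^T$, a vertex decorated by $n$ is replaced by a segment of $n$ vertices, so $\fraks^T(F,d_F)$ has $\sum_{v\in V(F)}d_F(v)=\|\vec s\|$ vertices; the flattening map $fl$ is built from the shuffle product and the operators $B_+^\omega$, both of which preserve the total number of letters, so $fl(\fraks^T(F,d_F))$ is a linear combination of words of length $\|\vec s\|$; and $\zeta_\shuffle$ sends a convergent word of length $k$ to an MZV of weight $k$ (by the integral \eqref{eq:integral_shuffle_MZV}). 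Hence $\zeta(C,\vec s)$ is a rational linear combination of MZVs of weight $\|\vec s\|$, as claimed.

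I do not anticipate a genuine obstacle: the theorem is a synthesis of results already established in Sections~\ref{section:two} and~\ref{section:four}. The one point deserving care is the translation of ``$\overline{\Psi}(\calc)$ is a convergent forest'' --- a condition phrased for $\N^*$-decorated forests --- into the applicability of $\fraks^T$ and Theorem~\ref{thm:tree_CZVs_MZVs}; this is exactly why the unimodularity and Lemma~\ref{lem:carac_tree_cone} hypotheses, which guarantee that $\overline{\Psi}(\calc)$ is an honest decorated rooted forest with $\overline{\Phi}(\overline{\Psi}(\calc))=\calc$, are what is needed.
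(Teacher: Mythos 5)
Your proposal is correct and follows essentially the same route as the paper, whose proof simply cites Proposition~\ref{prop:cone_to_tree} together with Theorem~\ref{thm:tree_CZVs_MZVs} (and, implicitly, Proposition~\ref{prop:forest_cones} for convergence). Your additional bookkeeping of the weight through $\fraks^T$, $fl$ and $\zeta_\shuffle$ is a welcome detail the paper leaves implicit, but it does not change the argument.
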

\begin{proof}
 This Theorem is a reformulation of Theorem \ref{thm:tree_CZVs_MZVs} together with the results of Lemma \ref{lem:carac_tree_cone} and Proposition \ref{prop:cone_to_tree}.
\end{proof}

\section{Computations of CZVs} \label{section:five}

The results of the previous subsection give us an algorithm to compute some CZVs. The steps to follow to compute $\zeta(C,\vec s)$ are first to check that $C$ is poset compatible, second to compute the second representing matrix of $C$ and to check that it has at most one $1$ per column. The third step is applying the branched binarisation map $\fraks^T$ to $\overline{\Psi}(C,\vec s)$.
 Then one has to flatten the obtained rooted forest with the map $fl_0$.
 Finally applying the MZVs map $\zeta_\shuffle$ gives the result. In some case, it is simpler to directly apply the $\yew$-flattening with Theorem \ref{thm:yew_formula} to make the computation.
 
We illustrate this procedure by computing some CZVs. Before this, let us point out that this algorithm produces the same result than the procedure of \cite[Definition 3.16]{OSY21}. Indeed, this procedure is what we called flattening (Definition \ref{defn:flattening}) but built from leaves rather than from the root (and excluding forests). Furthermore the harvestable trees of \cite{OSY21} are essentially our convergent trees in $\calf_{\{x,y\}}$. Then \cite[Theorem 3.17]{OSY21} is the finite version of \cite[Theorem 4.15]{Cl20}, which we use intensively for our computation.
\begin{ex} \label{ex:simple_cone}
 The simplest non-trivial example is:
 \begin{equation*}
  \zeta(C_1,\vec s_1):=\sum_{p,q,r\geq1}\frac{1}{(p+q+r)^2qr}.
 \end{equation*}
 We then have the representing and second representing matrices to be respectively:
 \begin{equation*}
  A_{C_1}= \begin{pmatrix}
            1 & 1 & 1 \\
            0 & 1 & 0 \\
            0 & 0 & 1
           \end{pmatrix},\quad 
  B_{C_1}= \begin{pmatrix}
            0 & 1 & 1 \\
            0 & 0 & 0 \\
            0 & 0 & 0
           \end{pmatrix}.
 \end{equation*}
 One easily check from $A_{C_1}$ that $C$ is poset compatible and clearly $B_C$ satisfies the hypothesis of Lemma \ref{lem:carac_tree_cone}. We further have
 \begin{equation*}
  \overline\Psi(C_1,(2,1,1)) = \tdtroisun{$2$}{$~1$}{$1$}
 \end{equation*}
 which is convergent. Applying the branched binarisation map $\fraks^T$ and the MZVs map $\zeta_\shuffle$ gives the result
 \begin{equation*}
  \sum_{p,q,r\geq1}\frac{1}{(p+q+r)^2qr} = 2\zeta_\stuffle(2,1,1).
 \end{equation*}
\end{ex}
The exact same computation can be used, up to the flattening, for higher powers in the denominator. We readily obtain:
\begin{prop}
 For any $\vec s=(n,m,l)\in\N^3$ with $n\geq2$ and $m,l\geq1$ we have that the CZVs
 \begin{equation*}
  \zeta(C_1,\vec s)=\sum_{p,q,r\geq1}\frac{1}{(p+q+r)^nq^mr^l}
 \end{equation*}
  are linear combinations of MZVs with rational coefficients given by
  \begin{equation*}
   \sum_{p,q,r\geq1}\frac{1}{(p+q+r)^nq^mr^l} = \sum_{i=0}^{l-1} \binom{m-1+i}{i} \zeta_\stuffle (n, m+i, l-i) +  \sum_{j=0}^{m-1} \binom{l-1+j}{j} \zeta_\stuffle (n, l+j, m-j).
  \end{equation*}

\end{prop}
\begin{proof}
 First observe that the cone underlying this conical sum is again $C_1$. So it is a linear combination of MZVs with rational coefficients from the same argument than the one of Example \ref{ex:simple_cone}. To compute them we need to apply the branched binarisation map $\fraks^T$ and the flattening map $fl_0$ to 
 \begin{equation*}
  \overline\Psi(C_1,(n,m,l)) = \tdtroisun{$n$}{$~l$}{$m$}.
 \end{equation*}
 For the computation, from Proposition \ref{prop:forest_cones} and Corollary \ref{coro:TZVs_stuffle_MZVs} we deduce
$$ \zeta(C_1,(n,m,l) ) = \zeta_\stuffle \circ fl_\yew \left( \tdtroisun{$n$}{$~l$}{$m$} \right). $$
Using Theorem \ref{thm:yew_formula} we obtain
$$ \tdun{$m$} \yew \tdun{$l$} = \sum_{i=0}^{l-1} \binom{m-1+i}{i} \tddeux{m+i}{l-i} 
+  \sum_{j=0}^{m-1} \binom{l-1+j}{j}\tddeux{l+j}{m-j}
$$
The result then follows from the definition of $fl_\yew$ (Definition \ref{defn:flattening_yew}).

\end{proof}
\begin{rk}
 The two computations above are a good illustration of an important point concerning the number-theoretic content of TZVs. It depends only of the weight of $||\vec s||:=s_1+\cdots+s_n$. Providing the standard conjectures on MZVs hold, $\zeta^t(\overline\Psi(C,\vec s))$ is always a linear combination of MZVs of weight exactly $||\vec s||$ with rational coefficients, for any decorated cone $(C,\vec s)\in\mathcal{CT}_{N^*}$ such that $\overline\Psi(C,\vec s)$ is a convergent forest. This is not the case for general CZVs, where some cancellations might happens and some terms in the expression of the CZV might be of lower weight than expected (or, of course, not be rational combination of MZVs). I am very thankful to E. Panzer who pointed out this fact to me and gave me an example.
\end{rk}
The two previous computations were special cases of Mordell-Tornheim zetas. However, the approach exposed above allows to compute many more conical sums. We present two more computations without every intermediate steps.
\begin{ex} \label{ex:less_simple}
 \begin{equation*}
  \zeta(C_2,\vec s_2) := \sum_{p,q,r,s,t\geq1}\frac{1}{(p+q+r+s+t)^4(q+t)^2rst}
 \end{equation*}
 We have 
 \begin{equation*}
  A_{C_2} = \begin{pmatrix}
             1 & 1 & 1 & 1 & 1 \\
             0 & 1 & 0 & 0 & 1 \\
             0 & 0 & 1 & 0 & 0 \\
             0 & 0 & 0 & 1 & 0 \\
             0 & 0 & 0 & 0 & 1 
            \end{pmatrix}, \quad 
  B_{C_2} = \begin{pmatrix}
             0 & 1 & 1 & 1 & 0 \\
             0 & 0 & 0 & 0 & 1 \\
             0 & 0 & 0 & 0 & 0 \\
             0 & 0 & 0 & 0 & 0 \\
             0 & 0 & 0 & 0 & 0
            \end{pmatrix}.
 \end{equation*}
 One easily checks that $C$ satisfies the hypothesis of Proposition \ref{prop:cone_to_tree}. Applying the algorithm above, one readily finds
 \begin{equation*}
  \sum_{p,q,r,s,t\geq1}\frac{1}{(p+q+r+s+t)^4(q+t)^2rst} = 2\zeta_\stuffle(4,1,1,2,1) + 6\zeta_\stuffle(4,1,2,1,1) + 12\zeta_\stuffle(4,2,1,1,1).
 \end{equation*}
\end{ex}
\begin{rk}
 Examples \ref{ex:simple_cone} and \ref{ex:less_simple} were kindly checked by E. Panzer using his HyperInt package. This package uses an integral representation of CZVs that differs from the one of Theorem \ref{thm:integral_sum}.
\end{rk}

The following last example required more complicated computations that we will also not detail.
\begin{ex}
 \begin{equation*}
  \zeta(C_3,\vec s_3) := \sum_{n_1,\cdots,n_7\geq1}\frac{1}{(n_1+\cdots+n_7)^5(n_2+\cdots+n_7)^2n_3(n_4)^2(n_5+n_6+n_7)^2n_6n_7}.
 \end{equation*}
 We have
 \begin{equation*}
  A_{C_3}=\begin{pmatrix}
           1 & 1 & 1 & 1 & 1 & 1 & 1 \\
           0 & 1 & 1 & 1 & 1 & 1 & 1 \\
           0 & 0 & 1 & 0 & 0 & 0 & 0 \\
           0 & 0 & 0 & 1 & 0 & 0 & 0 \\
           0 & 0 & 0 & 0 & 1 & 1 & 1 \\
           0 & 0 & 0 & 0 & 0 & 1 & 0 \\
           0 & 0 & 0 & 0 & 0 & 0 & 1
          \end{pmatrix},\quad 
  B_{C_3}=\begin{pmatrix}
           0 & 1 & 0 & 0 & 0 & 0 & 0 \\
           0 & 0 & 1 & 1 & 1 & 0 & 0 \\
           0 & 0 & 0 & 0 & 0 & 0 & 0 \\
           0 & 0 & 0 & 0 & 0 & 0 & 0 \\
           0 & 0 & 0 & 0 & 0 & 1 & 1 \\
           0 & 0 & 0 & 0 & 0 & 0 & 0 \\
           0 & 0 & 0 & 0 & 0 & 0 & 0 
          \end{pmatrix}
 \end{equation*}
 and the hypothesis of Proposition \ref{prop:cone_to_tree} are satisfied. After some more computations we obtain
 \begin{align*}
  & \sum_{n_1,\cdots,n_7\geq1}\frac{1}{(n_1+\cdots+n_7)^5(n_2+\cdots+n_7)^2n_3(n_4)^2(n_5+n_6+n_7)^2n_6n_7} \\
  =~ & 8\zeta_\stuffle(5,2,1,2,2,1,1) + 16\zeta_\stuffle(5,2,1,3,1,1,1) + 2\zeta_\stuffle(5,2,1,2,1,1,2) + 4\zeta_\stuffle(5,2,1,2,1,2,1) \\
  +~& 48\zeta_\stuffle(5,2,2,2,1,1,1) 
  + 28\zeta_\stuffle(5,2,2,1,2,1,1) + 8\zeta_\stuffle(5,2,2,1,1,1,2)   + 16\zeta_\stuffle(5,2,2,1,1,2,1) \\
  +~& 40\zeta_\stuffle(5,2,3,1,1,1,1) .
 \end{align*}
This result was kindly checked by Masataka Ono using their finite MZVs approach of \cite{OSY21}.

\end{ex}

\section*{Acknownledgements}

\addcontentsline{toc}{section}{Acknownledgements}

This paper stems from a discussion with Federico Zerbini, who suggested to me to look at conical zeta values and directed me toward some literature, including private communications. I am also very thanksful to Diego Andres Lopez Valencia, Lo\"ic Foissy and Erik Panzer, who read and commented a first version of this paper. They pointed out typos, mistakes and greatly improved its overall quality. I am also very thanksful to Masataka Ono for signaling to me the results available in the literature of finite MZVs and their generalisations and how they relate to this work and kindly answering my questions on this topic. Erik Panzer's comments about CZVs and the Dupont-Panzer-Zerbini conjecture were also very enlightening and I thank him for them.

 \addcontentsline{toc}{section}{References}
\printbibliography


\end{document}